\setlist{topsep=3pt,partopsep=0pt,itemsep=1pt,parsep=0pt}
\newtheorem{theorem}{Theorem}
\newtheorem{corollary}[theorem]{Corollary}
\newtheorem{lemma}[theorem]{Lemma}
\newtheorem{conjecture}[theorem]{Conjecture}
\theoremstyle{definition}
\newtheorem{definition}[theorem]{Definition}
\renewcommand\emptyset{\varnothing}
\def \deg {{\rm deg}}
\def \leq {\leqslant}
\def \geq {\geqslant}
\def \P {\mathcal{P}}
\def \M {\mathcal{M}}
\def \eps {\varepsilon}
\def \Kfme {K_4^-}
\def \nump#1{\#(#1)}
\def \mod#1{{\:({\rm mod}\ #1)}}
\renewcommand{\geq}{\geqslant}
\renewcommand{\leq}{\leqslant}
\renewcommand{\ge}{\geqslant}
\renewcommand{\le}{\leqslant}
\def\eref#1{$(\ref{#1})$}
\def\sref#1{\S$\ref{#1}$}
\def\lref#1{Lemma~$\ref{#1}$}
\def\tref#1{Theorem~$\ref{#1}$}
\def\fref#1{Figure~$\ref{#1}$}
\let\oldproofname=\proofname
\renewcommand{\proofname}{\rm\bf{\oldproofname}}
\begin{document}
\setstretch{1.1}

\title{\bf Induced path factors of regular graphs}

\author{Saieed Akbari\thanks{Department of Mathematical Sciences,
Sharif University of Technology, Tehran, Iran.
\texttt{s\_akbari@sharif.edu}
} \qquad Daniel Horsley\thanks{School of Mathematics, Monash University, Vic, Australia. \texttt{\{daniel.horsley,ian.wanless\}@monash.edu}}\qquad Ian M. Wanless\footnotemark[2]}

\date{}

\maketitle

\begin{abstract}
An {\em induced path factor} of a graph $G$ is a set of induced paths
in $G$ with the property that every vertex of $G$ is in exactly one of
the paths. The {\em induced path number} $\rho(G)$ of $G$ is the
minimum number of paths in an induced path factor of $G$.  We show
that if $G$ is a connected cubic graph on $n>6$ vertices, then
$\rho(G)\le(n-1)/3$.

Fix an integer $k\ge3$. For each $n$, define $\M_n$ to be the maximum
value of $\rho(G)$ over all connected $k$-regular graphs $G$ on $n$
vertices. As $n\rightarrow\infty$ with $nk$ even, we show that
$c_k=\lim(\M_n/n)$ exists.
We prove that $5/18\le c_3\le1/3$ and $3/7\le c_4\le1/2$ and that
$c_k=\frac12-O(k^{-1})$ for $k\rightarrow\infty$.

\bigskip\noindent
Keywords: Induced path, path factor, covering, regular graph, subcubic graph.

\noindent
Classifications: 05C70, 05C38.
\end{abstract}

\section{Introduction}

We denote the path of order $n$ by $P_n$.
A subgraph $H$ of a graph $G$ is said to be \emph{induced} if, for any
two vertices $x$ and $y$ of $H$, $x$ and $y$ are adjacent in $H$ if
and only if they are adjacent in $G$.
An {\it induced path factor} (IPF) of a graph $G$ is a set of induced
paths in $G$ with the property that every vertex of $G$ is in exactly
one of the paths. We allow paths of any length in an IPF, including
the {\em trivial path $P_1$}.  The {\em induced path number} $\rho(G)$
of $G$ is defined as the minimum number of paths in an IPF of $G$. The
main aim of this paper is to show:

\begin{theorem}\label{t:main}
Suppose that $G$ is a connected cubic graph on $n$ vertices.
If $n\le6$ then $\rho(G)=2$ and if $n>6$ then
$\rho(G)\le(n-1)/3$.
\end{theorem}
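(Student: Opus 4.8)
I would first dispose of the small cases. Since $G$ is cubic and connected it contains a cycle, so it is not itself a single induced path; hence $\rho(G)\ge 2$ for every connected cubic $G$. For $n\le 6$ the only connected cubic graphs are $K_4$ (when $n=4$) and, when $n=6$, the complete bipartite graph $K_{3,3}$ and the triangular prism. In each case one exhibits an explicit IPF with two paths: two induced copies of $P_2$ partition $K_4$ (any three vertices span a triangle, so $P_2$ is the longest induced path there), and two suitably chosen induced copies of $P_3$ partition each of the two graphs on six vertices. Together with $\rho(G)\ge2$ this gives $\rho(G)=2$, settling $n\le 6$.

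For $n>6$ the governing reformulation is that an IPF with $p$ paths is precisely a spanning induced linear forest (a spanning induced subgraph every component of which is a path) having $n-p$ edges. Writing $n_i$ for the number of paths of order $i$ in a minimum IPF, we have $\sum_i n_i=p$ and $\sum_i i\,n_i=n$, so $n-3p=\sum_{i\ge4}(i-3)n_i-2n_1-n_2$. Thus the bound $\rho(G)\le(n-1)/3$ is equivalent to $\sum_{i\ge4}(i-3)n_i\ge 2n_1+n_2+1$: the surplus carried by the long paths must exceed $2n_1+n_2$. The entire difficulty is therefore to control the number of trivial paths $P_1$ and of $P_2$'s appearing in a minimum IPF.

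To control these I would fix a minimum IPF $\mathcal{P}$ and exploit the local exchanges forbidden by its minimality. If $\{v\}$ is a trivial path, then each neighbour $u$ of $v$ that is an endpoint of another path $Q$ must have a second neighbour of $v$ inside $Q$; otherwise $v$ could be appended to $Q$ along $uv$ as an induced path, reducing the number of paths. Likewise, two endpoints of distinct paths that are adjacent in $G$ cannot be merged, so there must be a further (chord) edge between those two paths. Combining these constraints with the fact that every vertex has degree exactly $3$, I would run a discharging or averaging argument: send charge from the long paths (which carry the surplus $i-3$) along the forbidden extension/chord edges to the nearby short paths, and show that each $P_1$ and $P_2$ can be paid for while a strictly positive surplus remains.

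The main obstacle is exactly this bookkeeping: the exchange conditions only rule out isolated improvements, whereas a minimum IPF could in principle contain clusters of $P_1$'s and $P_2$'s whose charge must be balanced globally. I expect the cleanest route is to prove a slightly stronger statement for all connected subcubic graphs (maximum degree $\le 3$), since that class is closed under vertex deletion and so supports induction: delete a carefully chosen induced path $P$, use $\rho(G)\le 1+\sum_i\rho(G_i)$ over the components $G_i$ of $G-V(P)$, and apply the inductive hypothesis. Choosing $P$ so that $G-V(P)$ does not fragment into too many small components, and calibrating the subcubic invariant so that the $+1$ for $P$ is absorbed, is the delicate part; matching the exact constant $(n-1)/3$ rather than the easier $n/3$ will in particular force careful treatment of the low-degree and small-order boundary cases.
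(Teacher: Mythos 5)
Your treatment of the small cases is fine, but for $n>6$ what you have written is a plan with an acknowledged hole, not a proof: the discharging/exchange argument is never carried out, and you yourself identify the global bookkeeping of clusters of $P_1$'s and $P_2$'s as the unresolved obstacle. More importantly, the fallback you propose --- proving a stronger statement for all connected subcubic graphs so as to induct by deleting a path --- cannot work as stated. The paper exhibits a connected subcubic graph (an $(n/4)$-cycle with a pendant triangle attached to each vertex) that requires $3n/8$ paths in any IPF, so no bound of the form $n/3+O(1)$ holds on that class; local exchange conditions alone also cannot rule this example out, which is a warning sign for any purely local discharging scheme. Whatever invariant you induct on must encode more than maximum degree $\le 3$.

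The paper's route is quite different and worth contrasting. It takes a minimal counterexample and shows it must be $3$-connected (Lemmas~\ref{l:bridgeless}--\ref{l:3connected}, using surgeries such as suppressing a degree-$2$ vertex, augmenting a triangle, and pasting a $\Kfme$ over an edge, whose effect on $\rho$ is controlled by \lref{l:lift}). A theorem of Jackson and Yoshimoto then supplies a $2$-factor whose cycles all have length at least $5$, and the heart of the argument (\tref{t:23graphwfactor} and \lref{l:blocktree}) builds an IPF block by block along the block tree of a spanning $\{2,3\}$-subgraph, using a ``well-behaved'' condition on the partial IPFs to guarantee that paths crossing between blocks remain induced once the discarded chords are restored. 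The $2$-factor with long cycles is exactly the extra structure that excludes the pendant-triangle obstruction; your proposal has no substitute for it. If you want to salvage your approach, you would need either to carry the discharging through in full (and explain why the subcubic counterexample does not defeat it --- presumably by using $3$-regularity rather than just $\deg\le3$), or to identify a restricted subcubic class closed under your deletions on which the $(n-1)/3$ bound actually holds.
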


Of course, for disconnected cubic graphs the smallest IPF consists of
a minimal IPF of each component. In particular, \tref{t:main}
immediately implies:

\begin{corollary}
  A cubic graph on $n$ vertices has an IPF with at most
  $n/2$ paths. Equality holds if and only if every component is
  isomorphic to the complete graph $K_4$.
\end{corollary}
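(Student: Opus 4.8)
The plan is to reduce immediately to the connected case and then apply \tref{t:main} component by component. Since every induced path is connected, each path in an IPF of $G$ lies entirely within a single connected component of $G$; hence an IPF of $G$ is precisely a disjoint union of IPFs of its components, and $\rho(G)=\sum_i\rho(G_i)$, where $G_1,\dots,G_t$ are the components of $G$ and $G_i$ has $n_i$ vertices. It therefore suffices to prove that $\rho(G_i)\le n_i/2$ for each connected cubic $G_i$, with equality if and only if $G_i\cong K_4$, and then to add these inequalities.

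For the per-component bound I would split into the cases of \tref{t:main}, recalling that a connected cubic graph has an even number $n_i\ge4$ of vertices. If $n_i>6$, then \tref{t:main} gives $\rho(G_i)\le(n_i-1)/3$, and since $2(n_i-1)<3n_i$ holds for every $n_i$, this is \emph{strictly} less than $n_i/2$. If $n_i=6$, then $\rho(G_i)=2<3=n_i/2$, again strict. The only remaining value is $n_i=4$, where $\rho(G_i)=2=n_i/2$, which gives equality. Thus $\rho(G_i)\le n_i/2$ in all cases, with equality exactly when $n_i=4$.

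To complete the equality characterization I would observe that the unique connected cubic graph on $4$ vertices is $K_4$, so equality in a given component is equivalent to that component being isomorphic to $K_4$. Summing over components then yields $\rho(G)=\sum_i\rho(G_i)\le\sum_i n_i/2=n/2$, and equality in this total holds if and only if it holds in every component, i.e.\ if and only if every component of $G$ is isomorphic to $K_4$.

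There is no genuinely hard step here; the substance is entirely supplied by \tref{t:main}. The only point needing a little care is that the equality case relies on the bounds for $n_i>6$ and $n_i=6$ being strict, since it is this strictness that eliminates every component other than $K_4$; a merely non-strict bound would not pin down the extremal graphs.
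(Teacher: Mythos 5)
Your proof is correct and follows exactly the route the paper intends: the paper states only that the corollary follows "immediately" from Theorem~\ref{t:main} by taking a minimal IPF of each component, and your write-up supplies precisely that component-wise decomposition together with the observation that the per-component bound is strict for $n_i\ge 6$ and tight only for $K_4$. Nothing further is needed.
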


\tref{t:main} does not generalise to cubic multigraphs. If $n$ is even,
then by adding a parallel edge to every second edge of an $n$-cycle we
get a connected cubic multigraph with no IPF with fewer than $n/2$
paths. \tref{t:main} also does not generalise to subcubic graphs.  To
see this, start with an $(n/4)$-cycle and for every vertex $v$ add a
triangle which is connected to $v$ by one edge, as in
\fref{f:subcubic}. This graph has $n$ vertices but cannot be covered
with fewer than $3n/8$ paths.

\begin{figure}
\begin{centering}
\includegraphics[scale=0.6]{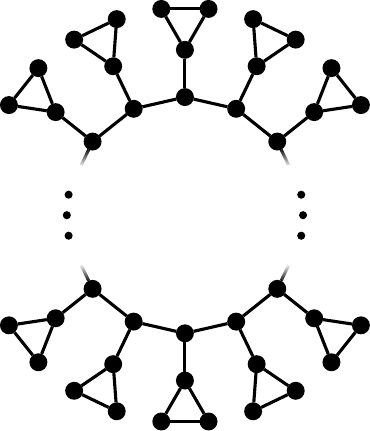}
\caption{\label{f:subcubic}Graph showing $3n/8$ paths may be required
  for a subcubic graph.}
\end{centering}
\end{figure}

It is not clear whether the $n/3-O(1)$ bound in \tref{t:main} can be
improved. However, in \sref{s:higherdeg} we construct a family of
connected cubic graphs $G$ for which $\rho(G)\ge 5n/18+O(1)$. In the same
section we find asymptotic bounds for the maximum value of $\rho(G)/n$
among connected $k$-regular graphs with $n$ vertices, for general $k$.

The concept of induced path number was introduced by Chartrand et
al.~\cite{cha}, who gave the induced path numbers of complete
bipartite graphs, complete binary trees, 2-dimensional meshes,
butterflies and general trees. Broere et al.~\cite{bro} determined the
induced path numbers for complete multipartite graphs.  In \cite{bro},
it was shown that if $G$ is a graph of order $n$, then $\sqrt{n}\leq
\rho(G)+\rho(\overline{G})\leq \lceil \frac{3n}{2}\rceil$, where
$\overline{G}$ denotes the complement of $G$. In \cite{hat}, the best
possible upper and lower bounds for $\rho(G) \rho(\overline{G})$ were
given for two variants: (i) when both $G$ and $\overline{G}$ are
connected and (ii) when neither $G$ nor $\overline{G}$ has isolated
vertices. Pan and Chang \cite{pan} presented an $O(|V|+|E|)$-time
algorithm for finding a minimal IPF on graphs whose blocks are
complete graphs.  Le et al.~\cite{le} proved for general graphs
that it is NP-complete to decide if there is an IPF with a
given number of paths.

Several variants of induced path numbers have been investigated in the
literature.  An IPF in which all paths have order at least two is
called an induced nontrivial path factor (INPF).  In \cite{AMS} the
following was proved:

\begin{theorem}\label{t:INPF}
If $k$ is a positive integer and $G$ is a connected $k$-regular graph
which is not a complete graph of odd order, then $G$ has an INPF.
\end{theorem}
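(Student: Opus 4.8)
The plan is to show that $G$ has an IPF containing no trivial path, which is exactly an INPF. First I would dispose of $k\in\{1,2\}$ directly: a connected $1$-regular graph is $K_2$, and a connected $2$-regular graph is a cycle $C_n$; since $C_3=K_3$ is the excluded odd complete graph, every other cycle $C_n$ with $n\ge4$ has an INPF built from copies of $P_2$ and at most one $P_3$ of consecutive vertices (which is induced since $n\ge4$). So I would assume $k\ge3$ from here on.

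For the main argument I would use an extremal/surgery approach. Among all IPFs of $G$, choose one, $\mathcal F$, minimising the number of trivial paths, and let $I$ be the set of vertices forming trivial paths; the goal is to show $I=\emptyset$ unless $G$ is an odd complete graph. A first easy observation is that $I$ is independent: two adjacent vertices of $I$ would form an induced $P_2$, reducing the number of trivial paths. Hence every neighbour of a vertex $v\in I$ lies on a nontrivial path of $\mathcal F$.

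The key local tool is then a collection of cut-and-reattach surgeries. Fix $v\in I$ and a nontrivial path $P=p_1p_2\cdots p_m$ of $\mathcal F$ through a neighbour of $v$, and let $a,b$ be the smallest and largest indices with $p_a\sim v$ and $p_b\sim v$. Prepending $v$ at an endpoint, or cutting $P$ just before $p_b$ to form the induced path $v\,p_b\cdots p_m$ together with the residual subpath $p_1\cdots p_{b-1}$ (and the symmetric cut at $p_a$), each removes $v$ from $I$ without creating a new trivial path, provided the residual piece is nontrivial or empty; the residual piece is automatically induced as a subpath of $P$. A short check shows these moves succeed unless $b=2$ and $a=m-1$, which (as $a\le b$) forces $m\le3$ and leaves exactly two \emph{stuck} configurations: (i) $P=p_1p_2$ is a $P_2$ with $v$ adjacent to both $p_1,p_2$ (an induced triangle), or (ii) $P=p_1p_2p_3$ is a $P_3$ with $v$ adjacent only to the centre $p_2$. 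Thus if $I\neq\emptyset$, then every neighbour of every $v\in I$ is the apex of a type-(i) triangle or the centre of a type-(ii) $P_3$.

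The crux, and the step I expect to be the main obstacle, is to rule out these residual configurations globally. The difficulty is that both are locally irreducible: the four vertices of a type-(ii) configuration induce a claw, which already forces one trivial path on its own, so it can only be dissolved by re-routing through the rest of the graph. I would handle this by an augmentation/reachability argument analogous to the theory of augmenting paths for matchings. From $v\in I$ one can perform \emph{token transfers} that keep $|I|$ fixed but move the trivial vertex elsewhere (for instance replacing $p_1p_2p_3$ and $v$ by the induced path $p_1p_2v$ and the newly trivial vertex $p_3$). I would study the set $R$ of vertices reachable from $v$ by such transfers: if any vertex of $R$ admits an $|I|$-reducing surgery we have a contradiction, so every vertex of $R$ is stuck. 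Here $k$-regularity (so $v$ has $k\ge3$ neighbours to exploit) and connectivity should force $R$ together with its incident paths to close up into a clique: type-(i) transfers repeatedly expose triangles and accumulate adjacencies, while a persistent type-(ii) configuration yields an induced $P_3$ (indeed a claw), and an odd complete graph contains no induced $P_3$ at all—so any such induced $P_3$ ought to supply the slack needed to augment and reduce $|I|$. Making this last point rigorous, namely converting an induced $P_3$/claw in the reachable region into an $|I|$-reducing move, is the heart of the proof. Once it is in place, the only way $I$ can be nonempty is that the component containing $v$ is a complete graph $K_n$; and in $K_n$ every induced path has at most two vertices, so an INPF is precisely a perfect matching, which exists if and only if $n$ is even. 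Connectivity then yields $G=K_n$ with $n$ odd as the unique exception.
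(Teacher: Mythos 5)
First, a point of reference: this paper does not actually prove Theorem~\ref{t:INPF}; the result is quoted from the cited work \cite{AMS}, so there is no in-paper proof to measure your argument against. Judged on its own terms, your proposal is sound up to a point: the reduction to $k\ge 3$, the choice of an IPF minimising the number of trivial paths, the independence of the set $I$ of trivial vertices, and the classification of the stuck local configurations (both cuts fail only when $b=2$ and $a=m-1$, forcing $m\le 3$ and hence the triangle or the claw) are all correct and carefully argued.

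The genuine gap is exactly where you flag it, and it is not a detail but the substance of the theorem: you must show that if every configuration reachable from $v$ is stuck then $G$ is an odd complete graph, and nothing in the proposal establishes this. Concretely: (1) a token transfer changes the entire IPF, not merely the location of the trivial vertex, so ``the set $R$ of vertices reachable by transfers'' is not well defined as a subset of $V(G)$; you would need to work over a space of states (IPF together with a distinguished trivial vertex) and supply a termination or well-foundedness argument to rule out cycling among stuck states. (2) The claim that type-(i) transfers ``accumulate adjacencies'' until the region closes into a clique is unsupported, and the observation that a connected non-complete graph contains an induced $P_3$ does not help unless that $P_3$ sits compatibly with the current IPF, which you have not arranged: type-(ii) stuck configurations are claws, and claws abound in non-complete regular graphs. (3) It is not even clear that your move set (the two cuts, prepend/append, and the two single-path token transfers) is complete; an $|I|$-reducing rearrangement might require a surgery involving two nontrivial paths at once, or a globally different IPF, in which case ``everything reachable is stuck'' could hold without contradiction. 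Until the step converting a stuck-everywhere state into the conclusion $G=K_n$ with $n$ odd is actually carried out, this is a plausible plan rather than a proof.
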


In addition, \cite{AMS} showed that every hamiltonian graph which is
not a complete graph of odd order admits an INPF. Also, if $G$ is a cubic
bipartite graph of order $n\geq 6$, then $G$ has an INPF with size at
most $n/3$.

The {\it path cover number} $\mu(G)$ of $G$ is defined
to be the minimum number of vertex disjoint paths required to cover
the vertices of $G$. Reed \cite{Reed} proved
that $\mu(G) \leq \lceil \frac{n}{9} \rceil$
for any cubic graph of order $n$.
Also, Reed \cite{Reed} conjectured that if $G$ is a 2-connected cubic
graph, then $\mu(G) \leq \lceil \frac{n}{10} \rceil$. This conjecture
was recently proved by Yu \cite{Yu18}.

Magnant and Martin \cite{Magnant1} investigated the path cover number
of regular graphs. They proposed the following interesting conjecture:

\begin{conjecture}
Let $G$ be a $k$-regular graph of order $n$. Then $\mu(G) \leq \frac{n}{k+1}$.
\end{conjecture}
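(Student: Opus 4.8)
Since this final statement is an open conjecture rather than a result established in the paper, what follows is a strategy rather than a complete argument. The natural first move is to dualise the problem. A spanning linear forest of $G$ with $p$ path-components uses exactly $n-p$ edges, so $\mu(G)=n-\lambda(G)$, where $\lambda(G)$ denotes the maximum number of edges carried by a spanning subgraph of $G$ all of whose components are paths (isolated vertices being allowed as trivial paths). The conjectured bound $\mu(G)\le n/(k+1)$ is therefore equivalent to the statement that every $k$-regular graph contains a spanning linear forest with at least $\frac{kn}{k+1}$ edges, that is, a path cover whose paths average at least $k+1$ vertices. Disjoint copies of $K_{k+1}$ show this would be best possible, and for $k=3$ it already follows from the bounds of Reed~\cite{Reed} and Yu~\cite{Yu18}, so the real content lies in $k\ge4$.

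I would then fix a spanning linear forest $L$ of $G$ with the maximum possible number of edges and analyse its endpoints by the rotation--extension method. Let $T$ be the set of path endpoints of $L$. Maximality of $L$ forbids any $G$-edge joining endpoints on two different paths, since inserting such an edge would merge the two paths and strictly increase $\lambda$. Each endpoint has $k$ neighbours in $G$ but at most one along $L$, so its other $k-1$ neighbours must be interior vertices of other paths or vertices of its own path. A P\'osa rotation --- deleting an edge $ww^+$ and inserting an edge from an endpoint $a$ to an interior vertex $w$ of another path --- produces a new maximum linear forest in which $w^+$ has become an endpoint. Iterating generates a large family of ``reachable'' endpoints, each subject to the same non-adjacency constraints, and the goal is to combine these constraints with $k$-regularity to force the paths to be long on average.

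Equivalently, the target can be phrased as a discharging argument: give every vertex charge $1$ and redistribute so that each path of $L$ finishes with charge at least $k+1$, whence the number of paths is at most $n/(k+1)$. A path on fewer than $k+1$ vertices must then import charge from the external neighbours of its endpoints, which $k$-regularity guarantees are plentiful and which the rotation analysis places on other, longer paths. A parallel route when $k$ is even is to start from a $2$-factor (which exists by Petersen's theorem), delete one edge per cycle, and try to guarantee that few cycles are short, since the number of cycles equals the resulting number of paths.

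The step I expect to be decisive, and the reason the conjecture remains open for general $k$, is converting this local structure into the sharp global count. Rotations preserve the number of components, so they yield many constraints but never by themselves reduce $p$; what is missing is an augmenting configuration --- analogous to an augmenting path in matching theory --- that genuinely merges paths, or a potential function monotone under the available moves. Controlling the simultaneous interaction of many short paths (or, in the $2$-factor approach, eliminating a possibly large population of short cycles such as triangles) without discarding more edges than one gains is precisely where a clean argument has so far proved elusive.
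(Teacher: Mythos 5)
This statement is the Magnant--Martin conjecture, which the paper records as an open problem (citing \cite{Magnant1}, where it is proved only for $k\le5$); the paper itself offers no proof, so there is nothing to compare your argument against on the paper's side. You have correctly recognised this, and your write-up is explicitly a strategy rather than a proof. As such it does contain a genuine gap --- namely the entire content of the conjecture for $k\ge6$ --- and you have located it accurately: the reformulation $\mu(G)=n-\lambda(G)$ and the observation that maximality of a spanning linear forest forbids edges between endpoints of distinct paths are both correct and standard, but P\'osa rotations preserve the number of path components, so the machinery you set up produces adjacency constraints without any mechanism that actually merges paths. The discharging reformulation is likewise only a restatement of the target, not a method for reaching it, until you specify where a short path's deficit of charge comes from and why the donors do not themselves fall below $k+1$.

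Two smaller points. First, be careful with the $k=3$ case: Reed's bound $\mu(G)\le\lceil n/9\rceil$ is stated for \emph{connected} cubic graphs, whereas the conjecture as written allows disconnected $G$ (and disjoint copies of $K_{k+1}$ show the bound is tight); the $k=3$ case does follow, but only after applying Reed's theorem componentwise and checking the small components such as $K_4$ separately. Second, your proposed $2$-factor route for even $k$ needs the same caveat: deleting one edge per cycle gives $\mu(G)\le$ (number of cycles), and a $2$-factor consisting entirely of triangles would give only $n/3$, so one must show a $2$-factor with few short cycles exists --- which is itself a hard open problem in general. None of this is a criticism of your judgement; it is simply confirmation that the statement is not provable by the techniques of this paper, and your honest framing of the proposal as a plan of attack is the right call.
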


They proved their conjecture for $k\le5$. Kawarabayashi et
al.~\cite{kaw}, proved that every $2$-connected cubic graph of order
at least 6 has a path factor in which the order of each path is at
least 6, and hence it has a path cover using only copies of $P_3$ and
$P_4$.
A subgraph $H$ of a graph $G$ is \emph{spanning} if $H$ has the same
vertex set as $G$.
The {\em minimum leaf number} $ml(G)$ of a connected graph $G$
is the minimum number of leaves among the spanning trees of $G$. In
\cite{goe} it was shown that $\mu(G)+1\leq ml(G)\leq 2\mu(G)$. It was
conjectured that if $G$ is a $2$-connected cubic graph of order $n$,
then $ml(G)\leq \lceil \frac{n}{10}\rceil$.

The structure of this paper is as follows. In the next section we
define terms and notation and prove some basic lemmas about the effect
of simple graph operations on the induced path number. In
\sref{s:23gr} we study IPFs in a certain class of subcubic graphs that
arise when we use induction to find IPFs for cubic graphs.  In
\sref{s:cubic} we prove our main result, \tref{t:main}, drawing on the
results in earlier sections. Finally, in \sref{s:higherdeg} we study
asymptotics for $\rho(G)$ where $G$ is a $k$-regular graph of order
$n$, with $k$ fixed and $n\rightarrow\infty$.

\section{Preliminaries}\label{s:prelim}

Throughout our paper the following notation and terminology will be
used.  When we need to specify the vertices in $P_n$ we will write it
as $[v_1,v_2,\dots,v_n]$, meaning that the edges in the path are
$v_1v_2,v_2v_3,\dots,v_{n-1}v_n$.  Similarly, we use
$(v_1,v_2,\dots,v_n)$ for a cycle of length $n$, with edges
$v_1v_2,v_2v_3,\dots,v_{n-1}v_n,v_nv_1$.  We denote the vertex set and
the edge set of a graph $G$ by $V(G)$ and $E(G)$, respectively. For a
graph $G$ and sets $E \subseteq E(G)$ and $V \subseteq V(G)$, we
denote by $G-E$ the graph obtained from $G$ by deleting the edges in
$E$ and denote by $G - V$ the graph obtained from $G$ by deleting the
vertices in $V$ and all the edges incident on them.
The degree of a vertex $v$ in $G$ will be denoted $\deg_G(v)$. The set
of neighbours of $v$ in $G$ will be denoted $N_G(v)$.  A connected
graph $G$ is said to be $k$-connected if it remains connected whenever
fewer than $k$ vertices are removed. Similarly, $G$ is
$k$-edge-connected if it remains connected whenever fewer than $k$
edges are removed.  A {\it bridge} in a connected graph is an edge
whose removal disconnects the graph.
A graph is called {\it $k$-regular} if each vertex
has degree $k$.  A {\it cubic graph} is a $3$-regular graph and a {\it
  subcubic graph} is a graph with maximum degree at most $3$. A {\it
  k-factor} of a graph is a spanning $k$-regular subgraph of $G$. So a
$2$-factor of $G$ is a disjoint union of cycles of $G$ which covers
all vertices of $G$. A graph is {\it hamiltonian} if it has a
$2$-factor consisting of a single cycle. For distinct positive
integers $a$ and $b$, an {\it $\{a,b\}$-graph} is a graph in which the
degree of each vertex is $a$ or $b$.  The $\{2,3\}$-graphs will play a
major role in our proof of \tref{t:main}. In particular, we will need
$\Kfme$, the graph obtained by removing one edge from the complete
graph $K_4$.
A \emph{block} of a graph is a maximal $2$-connected
subgraph. Throughout the paper when we refer to a block we mean a
block of order at least $3$. Note that because we will only be
concerned with subcubic graphs, their blocks will be vertex disjoint.

While an IPF is formally defined to be a set of paths, an IPF of a
graph $G$ can also be completely specified by giving the set of edges
of $G$ that are in its paths (vertices incident with no edges in the
set are trivial paths). Throughout the paper we will use set
operations to build IPFs from IPFs of subgraphs, as well as to remove
or add edges. Whenever we do so, the IPFs should be considered to be
sets of edges rather than sets of paths. For any IPF $\P$, we use
$\nump{\P}$ to mean the number of paths in $\P$.  When calculating
$\nump{\P}$, it is useful to bear in mind that the number of paths in
an IPF $\P$ of a graph $G$ is always equal to the order of $G$ minus
the number of edges in $\P$. In particular, there is some dependence
on $G$, which will often be implicit.

We start with a lemma showing the effect of two basic operations on graphs.

\begin{lemma}\label{l:glue}
\mbox{ \ }
\begin{itemize}
\item[\textup{(i)}]
If $G'$ is obtained by subdividing an edge of $G$ then $\rho(G')\ge\rho(G)$.
\item[\textup{(ii)}]
If $G$ is obtained from disjoint graphs $A$ and $B$ by identifying a vertex
of $A$ with a vertex of $B$, then
$\rho(G)\ge\rho(A)+\rho(B)-1$.
\end{itemize}
\end{lemma}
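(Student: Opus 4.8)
The plan is to prove both parts by starting from a minimum IPF of the larger graph and converting it into IPFs of the smaller graph(s) while controlling the total number of paths. Throughout I will lean on the identity, noted in \sref{s:prelim}, that the number of paths in an IPF equals the order of the graph minus the number of edges used; this turns all of the bookkeeping into edge-counting. For (i) I reverse the subdivision, and for (ii) I restrict the IPF to each of the two pieces.

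For (i), write $w$ for the degree-$2$ vertex created by subdividing the edge $uv$, so that $N_{G'}(w)=\{u,v\}$ and $uv\in E(G)\setminus E(G')$. Take an IPF $\P'$ of $G'$ with $\nump{\P'}=\rho(G')$ and split into cases according to how many of $uw,wv$ lie in $\P'$. If both do, then $w$ is internal to some path $[\dots,u,w,v,\dots]$, and I replace $uw,wv$ by the single edge $uv$ to recover a path of $G$; since $w$ has no neighbour other than $u,v$, the contracted path is still induced, and the path count is unchanged. If exactly one, say $uw$, lies in $\P'$, then $w$ is an endpoint; here the fact that $wv\in E(G')$ forces $v$ onto a \emph{different} path (otherwise $wv$ would chord $w$'s path), so after deleting $w$ the reinstated edge $uv$ joins two distinct paths and creates no chord. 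In the remaining case $w$ is a trivial path, which I delete; if $u$ and $v$ then lie on a common path $Q$, the edge $uv$ becomes a chord of $Q$, which I eliminate by splitting $Q$ into two induced subpaths, one containing $u$ and the other $v$ (possible as they are non-consecutive on $Q$). The extra path is paid for by the trivial path just deleted, so in every case the resulting IPF of $G$ has at most $\rho(G')$ paths, giving $\rho(G)\le\rho(G')$.

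For (ii), let $x$ be the vertex obtained by identifying a vertex of $A$ with a vertex of $B$, and take an IPF $\P$ of $G$ with $\nump{\P}=\rho(G)$. Since $x$ is a cut vertex, every path of $\P$ avoiding $x$ lies wholly inside $A$ or wholly inside $B$, and exactly one path $P$ contains $x$. As $x$ occurs only once on $P$, the $A$-side and $B$-side portions of $P$ are induced subpaths meeting only at $x$. I then let $\P_A$ consist of all the wholly-$A$ paths together with the $A$-portion of $P$, and define $\P_B$ analogously; these are IPFs of $A$ and $B$, because subpaths of induced paths remain induced and adjacency within $A$ (resp.\ $B$) agrees with adjacency in $G$. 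Splitting $P$ at $x$ places a copy of $x$ in each of $\P_A,\P_B$, so $\nump{\P_A}+\nump{\P_B}=\nump{\P}+1$, whence $\rho(A)+\rho(B)\le\rho(G)+1$, as required.

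The main obstacle in both parts is the induced condition: the transformations must not introduce a chord. In (ii) this is essentially automatic, since nothing is glued and we only cut. In (i) it is the crux, because the re-created edge $uv$ threatens to chord a path. The key observation that tames this is that $w$ has degree $2$: whenever $w$ lies on a path, the requirement that the path be induced already pins down where $u$ and $v$ may sit, and the only genuinely awkward configuration — $w$ isolated with $u,v$ on one path — conveniently comes with a spare path to absorb the necessary split.
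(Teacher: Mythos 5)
Your proposal is correct and follows essentially the same argument as the paper: for (i) the same three-case analysis on how many of $uw,wv$ the IPF uses (contract, truncate at $w$, or delete the trivial path $[w]$ and split the $u$--$v$ path if needed), and for (ii) the same restriction of a minimum IPF of $G$ to $A$ and $B$ with the path through the cut vertex counted once on each side. Your explicit justification that $v$ must lie on a different path from $w$ in the one-edge subcase is a point the paper leaves implicit, but the proofs are otherwise identical in substance.
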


\begin{proof}
To show (i), suppose that edge $uv$ of $G$ is subdivided by a new
vertex $w$, thereby forming $G'$. Let $\P'$ be an IPF for $G'$. If a
path in $\P'$ includes both edges $uw$ and $vw$ then replacing those
edges with the edge $uv$ gives an IPF $\P$ for $G$ with
$\nump{\P}=\nump{\P'}$. If a path in $\P'$ includes exactly one of the edges $uw$
and $vw$ then $w$ is the end of the path, so removing the edge incident
with $w$ gives an IPF
$\P$ for $G$ with $\nump{\P}=\nump{\P'}$. Lastly, if $\P'$ includes neither of
the edges $uw$ and $vw$ then $[w]$ is a trivial path in $\P'$. Remove $[w]$
from $\P'$ to get a set of paths in $G$. If each of these $\nump{\P'}-1$
paths is induced, we are done. The only way one of the paths can be
not induced is if it includes both $u$ and $v$. In that case, deleting
one of the edges on the path between $u$ and $v$ creates an IPF of $G$
with $\nump{\P'}$ paths in it.  In all cases, we have succeeded in finding
an IPF of $G$ that has at most $\nump{\P'}$ paths.

We next turn to (ii). Suppose $u\in V(A)$ and $v\in V(B)$ and that $G$
is formed by identifying $u$ with $v$ (for clarity, we will call the
merged vertex $w$). Let $\P$ be an IPF for $G$ with $\nump{\P}=\rho(G)$.
Then $\P$ induces IPFs $\P_A$ and $\P_B$ for $A$ and $B$ respectively.
The path in $\P$ that contains $w$ contributes one path to $\P_A$
and one path to $\P_B$. However, every other
path in $\P$ is wholly within $A$ or within $B$. It follows that
$\nump{\P}=\nump{\P_A}+\nump{\P_B}-1\ge\rho(A)+\rho(B)-1$, and we are done.
\end{proof}

We remark that in both parts of
\lref{l:glue} equality often holds but strict inequality is
possible. For (i), take edges $e_1,e_2,e_3$ that form a $1$-factor in
$K_6$. Let $G=K_6-\{e_1,e_2\}$ and form $G'$ by subdividing $e_3$.
Then $\rho(G)=2$ but $\rho(G')=3$. For (ii), take $A=[a_1,a_2,a_3]$
and $B=[b_1,b_2,b_3]$ and merge $a_2$ with $b_2$ to form $G$. In this
case, $\rho(A)=\rho(B)=1$ but $\rho(G)=3$.

We now introduce the notion of a well-behaved IPF. This definition is
designed for a subsequent application where we will need to ensure
that an IPF of a subcubic graph $H$ is also an IPF of a cubic graph
$G$ that is formed by adding certain edges to $H$.

\begin{definition}
Let $G$ be a subcubic graph, let $S=\{x \in V(G):\deg_G(x) \leq 2\}$,
and let $\P$ be an IPF of $G$. We say that $\P$ is \emph{well-behaved}
(in $G$) if, for each path $P$ of $\P$, we have that either
\begin{itemize}
\item[(i)]
  $V(P) \cap S$ is a subset of the vertices of a single block of $G$; or
\item[(ii)] $P$ contains a subpath $[x,x',y',y]$, where $V(P) \cap S=\{x,y\}$
and $x'y'$ is a bridge of $G$.
\end{itemize}
If the above definition holds with $S$ replaced by
$\{x \in V(G):\deg_G(x) \leq 2\}\setminus R$
for some set of vertices $R$, then we say
that $\P$ is \emph{well-behaved except on $R$}.
\end{definition}

When we say that an IPF is well behaved except on some set $R$, this
does not imply anything about whether the IPF is or is not
well-behaved in the graph overall. The remainder of this section will
be devoted to proving the following lemma, which describes several
surgeries that we will perform on IPFs.

\begin{lemma}\label{l:lift}
Let $G$ and $G'$ be subcubic graphs.
\begin{itemize}
\item[{\rm(i)}] Suppose $G'$ is obtained from $G$ by taking a triangle
  in $G$ on vertex set $\{a,b,c\}$ such that $\deg_G(a)=\deg_G(b)=3$
  and $\deg_G(c)=2$, subdividing $ab$ with a new vertex $d$, and adding
  the edge $cd$. If $G'$ has an IPF $\P'$, then there is an IPF $\P$ of $G$ such that
  $\nump{\P}\le\nump{\P'}$ and a path of $\P$ ends at $c$. Furthermore, $\P \subseteq (\P' \setminus \{bc\}) \cup \{ac\}$ and, if $\P'$ is well behaved,
  then $\P$ is well-behaved except on $\{c\}$.
\item[{\rm(ii)}] Suppose $G'$ is obtained from $G$ by deleting an edge
  $ab$ in $G$ such that $\deg_G(a)=\deg_G(b)=2$, and then
  adding new vertices $\{c,d\}$ and edges $\{ac,ad,bc,bd,cd\}$. If $G'$ has an IPF
  $\P'$, then there is an IPF $\P$ of $G$ such that $\nump{\P} \le\nump{\P'}$ and two distinct
  paths of $\P$ end at $a$ and $b$. Furthermore, $\P \subseteq \P'$ and, if $\P'$ is well behaved,
  then $\P$ is well-behaved except on $\{a,b\}$.
\item[{\rm(iii)}] Suppose $G'$ is obtained from $G$ by deleting a degree
  $2$ vertex $c$ in $G$ such that
  $N_G(c)=\{a,b\}$ and $ab\notin E(G)$, and
  then adding the edge $ab$. If $G'$ has an IPF $\P'$, then there is an IPF $\P$ of $G$ such that $\nump{\P}\le\nump{\P'}+1$ and a path of $\P$ ends at $c$. Furthermore, $\P \subseteq \P' \cup \{ac\}$ and, if $\P'$ is well behaved, then $\P$ is well-behaved except on $\{c\}$.
\end{itemize}
\end{lemma}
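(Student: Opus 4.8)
The plan is to prove all three parts by the same template: regard $\P'$ as a set of edges, leave every edge of $\P'$ lying outside the modified region untouched, and locally re-route only the paths that meet the region, so as to produce an IPF $\P$ of $G$. Throughout I will use the identity noted in \sref{s:prelim}, that the number of paths of an IPF equals the order of the ambient graph minus the number of edges of the IPF; since $G$ and $G'$ differ by a controlled number of vertices (one copy of $d$ in (i), the two vertices $c,d$ in (ii), the single vertex $c$ in (iii)), controlling $\nump{\P}$ reduces to bookkeeping how many edges the re-routing gains or loses.

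The engine for parts (i) and (ii) is a short structural analysis of how an IPF can meet the gadget $\Kfme$ on $\{a,b,c,d\}$ (edges $ac,ad,bc,bd,cd$, with $ab$ absent). Because $\Kfme$ has only one non-adjacent pair, it contains no induced $P_4$, and its only induced paths on three vertices are $[a,c,b]$ and $[a,d,b]$; moreover a vertex cannot use both of its gadget edges (e.g.\ $c$-$a$-$d$ would have the chord $cd$), and if $cd\in\P'$ then $[c,d]$ must be an isolated $P_2$. Consequently $\P'$ uses at most two gadget edges, falling into a short list of configurations: the isolated $[c,d]$; a through-path $[\dots,a,c,b,\dots]$ or $[\dots,a,d,b,\dots]$; a ``split'' use $\{ac,bd\}$ or $\{ad,bc\}$ with $c,d$ as endpoints of distinct paths; and the one- and zero-edge degenerate cases. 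I will treat each configuration in turn.

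For part (iii) the picture is simplest, since $G$ is just $G'$ with the edge $ab$ subdivided by the degree-two vertex $c$. If $ab\notin\P'$ I add $c$ as a trivial path; if $ab\in\P'$ I delete $ab$ from its path and re-attach $c$ at $a$ (or $b$), making $c$ an endpoint. Each move keeps the paths induced in $G$ (the only new adjacencies in $G$ involve $c$, whose neighbours are exactly $a,b$), and each costs at most one extra path, matching the allowed bound $\nump{\P}\le\nump{\P'}+1$. For parts (i) and (ii) I translate each gadget configuration to the triangle $\{a,b,c\}$ (for (i)) or back to the single edge $ab$ (for (ii)): in the through-configurations I keep $a$ and $b$ on a common path, using the edge $ab$ in case (i); in the split and isolated configurations I cap the relevant vertices so that $c$ becomes an endpoint (for (i)) or $a$ and $b$ become endpoints of distinct paths with $ab$ unused (for (ii)). In every case the count works out because deleting $d$ (case (i)) or $c,d$ (case (ii)) frees exactly the edges the re-routing needs.

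The main obstacle is twofold. First, in part (i) the edge $ab$ is present in $G$ but absent in $G'$, so any path of $\P'$ carrying both $a$ and $b$ without joining them through the gadget acquires a forbidden chord $ab$ in $G$; I must show such a path can always be broken to separate $a$ from $b$ within budget. The point is that a configuration connecting $a$ to $b$ \emph{only} externally uses at most one gadget edge (the two-edge through- and split-configurations either route $a$–$b$ through the gadget or leave them on distinct paths), which leaves enough slack from the deleted vertex $d$ to absorb the extra path created by the split; the same slack accounting handles the analogous cycle-creating case in part (ii). Second, I must verify the ``well-behaved except on $R$'' conclusions: since the added or deleted vertices ($c$, or $a,b$) are exactly those excluded, the set $S\setminus R$ is unchanged between $G$ and $G'$, so the task is to check that each re-routed path still satisfies clause (i) or (ii) of the definition. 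This reduces to tracking how blocks and bridges change under the surgery; since the changes are confined to the modified region and its incident bridges, the block or bridge that the definition assigns to a path's low-degree vertices is preserved for all vertices outside $R$. This bookkeeping, rather than any single hard idea, is where the care lies.
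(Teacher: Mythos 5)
Your proposal is correct in outline but takes a genuinely different route from the paper. The paper first proves a separate standardisation lemma (\lref{l:stndrd}): any IPF of a subcubic graph can be modified, without increasing the number of paths or destroying well-behavedness, so that on every induced $\Kfme$ the two vertices of the non-edge each send a pendant path-end into one of the other two vertices. After standardising, your whole configuration list for cases (i) and (ii) collapses to the single ``split'' configuration, the surgery is just deleting the one or two gadget edges incident with the vertex being removed, and case (iii) is handled exactly as you handle it. Your approach instead enumerates the ways $\P'$ can meet the gadget and treats each in place. The enumeration is complete --- though your blanket claim that ``a vertex cannot use both of its gadget edges'' is literally false for $c$ and $d$, which may sit interior to the through-paths $[a,c,b]$ or $[a,d,b]$; you list those configurations separately, so no case is lost --- and your identification of the real obstacle (a path of $\P'$ joining $a$ to $b$ externally, which acquires the chord $ab$ in $G$) is exactly right. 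Two places will need the most care when you write the cases out. First, in part (i) with $[c,d]$ an isolated $P_2$ and an external path $P$ from $a$ to $b$: breaking $P$ and leaving $c$ as a trivial path costs one path too many, so you must break $P$ at the edge of $P$ incident with $a$ (note $a$ is an endpoint of $P$ in this configuration) and re-attach the freed vertex $a$ to $c$ via the edge $ac$; only this merge keeps $\nump{\P}\le\nump{\P'}$. Second, that merge creates a path of $\P$ whose vertices did not all lie on one path of $\P'$, so the well-behavedness transfer is not automatic; it survives only because $\deg_G(a)=\deg_G(b)=3$ in part (i) and $c\in R$, so the merged piece contributes nothing to $V(P)\cap S$. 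The paper's standardisation buys exactly the avoidance of this case-by-case bookkeeping, at the price of one extra lemma; your version is self-contained but must carry these checks explicitly.
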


If hypothesis (i) holds in \lref{l:lift}, then we say that $G'$ is
obtained from $G$ by \emph{augmenting the triangle on vertex set
$\{a,b,c\}$}. If hypothesis (ii) holds, then we say that $G'$ is
obtained from $G$ by \emph{pasting a $\Kfme$ over $ab$}. If
hypothesis (iii) holds, then we say that $G'$ is obtained from $G$ by
\emph{suppressing the vertex $c$}.

In order to prove \lref{l:lift}, we will require a definition and a further lemma.
Both of these are used only in the proof of \lref{l:lift}.

\begin{definition}
Let $G$ be a subcubic graph, let $\P$ be an IPF of $G$, and let
$(a,b,c,d)$ be a quadruple of vertices of $G$ that induce a $\Kfme$
subgraph that does not contain the edge $ab$. We say that
\emph{$\P$ is standardised for $(a,b,c,d)$} if $c$ is an endpoint of a
path in $\P$ that includes the edge $ac$, and $d$ is an endpoint of a
path in $\P$ that includes the edge $bd$ (note that the two paths must
be distinct).
\end{definition}

\begin{lemma}\label{l:stndrd}
Let $G$ be a subcubic graph and let $(a,b,c,d)$ be vertices
of $G$ that induce a $\Kfme$ subgraph that does not contain the edge
$ab$. If there is an IPF $\P$ of $G$ then there is an IPF $\P^*$ of $G$
such that $\nump{\P^*} \leq \nump{\P}$ and $\P^*$ is standardised for
$(a,b,c,d)$. Moreover, $\P^* \subseteq \P \cup \{ac,bd\}$ and if $\P$
is well-behaved then so is $\P^*$.
\end{lemma}

\begin{proof}
  Suppose that $\P$ is not standardised for $(a,b,c,d)$.
Let $H$ be the subgraph induced by $\{a,b,c,d\}$.
  First suppose
that either $a$ and $b$ are in distinct paths in $\P$ or they are both
in a path that also includes $c$ or $d$. Either way,
$\P^*=(\P \setminus E(H)) \cup \{ac,bd\}$ is an IPF of $G$ with
$\nump{\P^*} \leq \nump{\P}$.

Otherwise $a$ and $b$ are both in a path that includes neither $c$ nor
$d$ and hence there is a vertex $e$ in $G-V(H)$ that is adjacent to $b$
in $G$. Then $\P^*=\big(\P \setminus (E(H) \cup \{be\})\big) \cup
\{ac,bd\}$ is an IPF of $G$ with $\nump{\P^*} \leq \nump{\P}$.

In each case, note that $\P^*$ is standardised for $(a,b,c,d)$ and
$\P^* \subseteq \P \cup \{ac,bd\}$. It remains to justify the claim
that $\P^*$ inherits the well-behaved property from $\P$.  This
follows from the observation that every path $P$ in $\P^*$ has a
subpath $P'$ that includes all vertices of $P$ that have degree $2$ in
$G$, and is such that $P'$ is itself a subpath of a path in $\P$.
\end{proof}

We are now ready to prove Lemma~\ref{l:lift}. It will be useful to note
that by Menger's theorem and the definition of a block, two distinct vertices
are in the same block in a graph $G$ if and only if there is a cycle in $G$
containing both of them.

\begin{proof}[\textbf{\textup{Proof of \lref{l:lift}.}}]
Let $G$ and $G'$ be graphs such that the hypothesis of (i), (ii) or (iii)
holds. We will say we are in case (i), (ii) or (iii) accordingly.
Let $\P'$ be an IPF of $G'$. In cases (i) and (ii), let $\P^*$ be an
IPF of $G'$ such that $\nump{\P^*} \leq \nump{\P'}$, $\P^*$ is
standardised for $(a,b,c,d)$, $\P^* \subseteq \P' \cup \{ac,bd\}$, and
$\P^*$ is well-behaved if $\P'$ is. Such a $\P^*$ exists by
\lref{l:stndrd}. In case (iii), let $\P^*=\P'$.

In case (i), let $\P=\P^*\setminus \{bd\}$. Because $\P^*$ is
standardised for $(a,b,c,d)$, it contains $ac$ and $bd$ but not
$ad$, $bc$ or $cd$.
Thus, $\nump{\P}=\nump{\P^*}$, a path of $\P$ ends at $c$,
and $\P \subseteq(\P'\setminus\{bc\})\cup \{ac\}$. In case (ii), let
$\P=\P^*\setminus\{ac,bd\}$.  Similarly, because $\P^*$ is
standardised for $(a,b,c,d)$, $\P^*$ contains $ac$ and $bd$ but not
$ad$, $bc$ or $cd$. Thus, $\nump{\P}=\nump{\P^*}$, two distinct paths of
$\P$ end at $a$ and $b$, and $\P \subseteq \P'$. In case (iii), let
\[\P=
\left\{
  \begin{array}{ll}
    \P^* & \hbox{if $ab \notin \P^*$} \\
    (\P^*\setminus \{ab\}) \cup \{ac\} & \hbox{if $ab \in \P^*$}
  \end{array}
\right.
\]
So a (possibly trivial) path of $\P$ ends at $c$, and
$\P \subseteq \P' \cup \{ac\}$. Also, $\P$ and $\P^*$ have the same
number of edges, but $G$ has one more vertex than $G'$, so it follows
that $\nump{\P}=\nump{\P^*}+1$.

Now further suppose that $\P'$ is well-behaved and let $R=\{c\}$ in
cases (i) and (iii) and $R=\{a,b\}$ in case (ii). It remains to show
that $\P$ is well-behaved except on $R$. Recall that $\P^*$ is
well-behaved because $\P'$ is. Let
$S=\{x \in V(G):\deg_G(x) \leq 2\}\setminus R$
and note that $S$ is a subset of $S'=\{x \in
V(G'):\deg_{G'}(x) \leq 2\}$. Let $P$ be a path in $\P$ and let $x$
and $y$ be distinct vertices in $V(P) \cap S$. We will complete the
proof by showing that either $x$ and $y$ are in the same block in $G$
or $P$ has a subpath $[x,x',y',y]$ where $V(P) \cap S=\{x,y\}$ and
$x'y'$ is a bridge in $G$.  Given that $x,y \in V(P) \cap S$, it
follows in each case from our definition of $\P$ that $x$ and $y$ were
in the same path $P^*$ in $\P^*$. Thus, because $\P^*$ was
well-behaved and $x,y \in S'$, either $x$ and $y$ were in the same
block in $G'$ or $P^*$ has a subpath $[x,x',y',y]$ where
$V(P^*) \cap S=\{x,y\}$ and $x'y'$ is a bridge in $G'$. If the former
holds, then $x$ and $y$ are in the same block in $G$ (in each case the existence
of a cycle in $G'$ containing $x$ and $y$ implies the existence of a
cycle in $G$ containing $x$ and $y$). So suppose the latter holds. In
cases (i) and (ii), by our definition of $\P$, either $P=P^*$ or $P$
is obtained from $P^*$ by removing an edge in
$\{ac,bd\}$. Furthermore, $[x,x',y',y]$ must be a subpath of $P$
because $x,y \in S$ and $x'y'$ is a bridge in $G'$. In case (iii),
either $P$ is a subpath of $P^*$ (possibly with $P=P^*$) or $P$ is
obtained from a subpath of $P^*$ by adding the edge $ac$. Furthermore,
$[x,x',y',y]$ must be a subpath of $P$ because if $ab$ were an edge in
$[x,x',y',y]$ we would have the contradiction that $x$ and $y$ were in
different paths in $\P$. In each case, the fact that $x'y'$ is a
bridge in $G'$ implies it is a bridge in $G$. This establishes that
$\P$ is well-behaved except on $R$.
\end{proof}

\section{Induced path factors of \texorpdfstring{$\mathbf{\{2,3\}}$}{\{2,3\}}-graphs}\label{s:23gr}

A \emph{triangle ring} is a graph formed by taking an $n$-cycle
$(x_1,\ldots,x_n)$ and adding the chords
$\{x_nx_2,x_3x_5,x_6x_8,\ldots,x_{n-3}x_{n-1}\}$ for some integer $n
\geq 6$ such that $n \equiv 0 \mod{3}$.

Further we say a graph is \emph{bad} if it can be obtained from a
triangle ring by choosing some (possibly empty) set $S$ of its edges
such that no edge in $S$ is in a triangle, and for each edge $e \in S$
proceeding as follows: subdivide $e$ with a vertex $x_e$, add a vertex
disjoint copy of any hamiltonian $\{2,3\}$-graph $H_e$ of order $5$, and
add an edge between $x_e$ and a degree $2$ vertex of $H_e$.

Note that every bad graph has order divisible by $3$. We refer to the
largest block of a bad graph as its \emph{hub}. The hub of a bad graph
has order at least $6$ and each of its other blocks has order $5$. A
fact that will prove useful throughout this section is that a graph
cannot be bad if it contains a vertex of degree $2$ that is in a block
of order at least $6$ but is not in a triangle.

The main result of this section is the following.

\begin{theorem}\label{t:23graphwfactor}
Let $G$ be a connected $\{2,3\}$-graph of order $n \geq 7$ containing
a $2$-factor whose cycles each have length at least $5$. Then
$\rho(G)\le n/3$ if $G$ is a bad graph and $\rho(G)\le (n-1)/3$
otherwise.
\end{theorem}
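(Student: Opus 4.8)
The plan is to argue by strong induction on $n$, using the identity $\nump{\P}=n-|E(\P)|$ from \sref{s:prelim}: an IPF with at most $n/3$ (respectively $(n-1)/3$) paths is the same thing as one with at least $2n/3$ (respectively $(2n+1)/3$) edges. I would work throughout from the given $2$-factor $F$ and the perfect matching $M=E(G)\setminus F$ on the degree-$3$ vertices. Since every cycle of $F$ has length at least $5$, no triangle lies inside $F$, so each triangle of $G$ uses two $F$-edges and one chord of $M$; its apex is a degree-$2$ vertex and its two base vertices have degree $3$. Hence the degree-$2$ vertices split into \emph{apexes}, whose neighbours are joined by a chord, and non-apexes, whose neighbours are non-adjacent, and this is exactly the split between the hypotheses of the three surgeries in \lref{l:lift}.

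The engine of the induction is the block structure. The cycles of $F$ lie inside single blocks, and since no degree-$2$ vertex can be incident with a bridge (both its edges lie on an $F$-cycle), every bridge of $G$ is a chord and distinct blocks are joined by single bridges. When $G$ has more than one block I would peel off a leaf block $B$, covering $B$ together with the hub-side end of its bridge and recursing on the connected remainder (in which the two neighbours of that end are reconnected). Making this legitimate is the purpose of the \emph{well-behaved} condition: I would strengthen the inductive hypothesis to deliver a well-behaved IPF, which by \lref{l:stndrd} and \lref{l:lift} can be passed between $G$ and the smaller graphs in the class, and which guarantees that the paths meeting the low-degree vertices of $B$ can be joined across or kept clear of the bridge without losing inducedness. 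In the \emph{bad} case this recursion matches the definition term by term: a pendant order-$5$ block together with its subdivision vertex contributes six vertices that are coverable by two induced paths—using that a $C_5$ needs two paths, since it contains no induced $P_5$—so each such chunk preserves the ratio $1/3$ and reduces $G$ to a smaller bad graph or to its \emph{hub}.

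It then remains to treat a single block, that is, a $2$-connected graph in the class. For the extremal case, a \emph{triangle ring}, I would exhibit the factor directly: writing the apexes as $a_k$ and the base vertices as $u_k,v_k$ with $v_{k-1}u_k$ the inter-triangle edge, the paths $[v_{k-1},u_k,a_k]$ are induced, pairwise disjoint, and partition $V(G)$ into exactly $n/3$ paths. For any other $2$-connected graph in the class I would locate a degree-$2$ vertex that lies in a block of order at least $6$ and is not an apex; by the structural fact recorded after the definition of a bad graph, this certifies that $G$ is not bad. Being a non-apex it has non-adjacent neighbours, so it may sit in the interior of an induced path with both of its incident edges used, whereas in the triangle-ring packing every degree-$2$ vertex is an apex forced to be an endpoint contributing only one edge; exploiting one such vertex gains an edge and removes a path, improving the bound to $(n-1)/3$. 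Apex configurations, where suppression is blocked because the base vertices are already adjacent, and adjacent non-apex pairs are instead normalised by augmenting the triangle or pasting a $\Kfme$ via \lref{l:lift}(i) and (ii), neither of which increases the path count.

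The step I expect to be hardest is the bookkeeping of the bad/non-bad dichotomy through these reductions. I must check that every reduced graph is again a connected $\{2,3\}$-graph carrying a $2$-factor all of whose cycles have length at least $5$—in particular that deleting a block, suppressing a vertex, or rerouting $F$ never creates a triangle or a cycle of length $3$ or $4$—and, more delicately, that the single unit of slack in the $(n-1)/3$ bound is available precisely when $G$ is not bad. This needs a careful account of how the hub, the order-$5$ blocks, and the subdivided edges are created or destroyed by each surgery, the awkward sub-case of a $2$-connected graph all of whose degree-$2$ vertices are apexes (which I would argue is forced to be a triangle ring), and a finite list of small base cases, roughly $7\le n\le 12$, where the six-vertex peeling recursion bottoms out.
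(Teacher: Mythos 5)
There is a genuine gap, and it sits exactly where your induction bottoms out. Your decomposition peels off blocks of $G$ and reduces to ``a single block, that is, a $2$-connected graph in the class.'' But a block of $G$ can contain \emph{many} cycles of the $2$-factor $F$: two $5$-cycles joined by two or more chords form a single $2$-connected block, and --- crucially for the intended application --- every $2$-connected cubic graph (the Petersen graph, say, or any $3$-connected cubic graph with a suitable $2$-factor) is a single block. For such graphs your $2$-connected case \emph{is} the whole theorem, and the argument you sketch for it does not apply: it hinges on ``locating a degree-$2$ vertex that lies in a block of order at least $6$ and is not an apex'' and exploiting it to gain an edge, but a cubic block has no degree-$2$ vertices at all, and even when degree-$2$ vertices exist you have no baseline IPF of a general non-hamiltonian $2$-connected member of the class against which to ``gain an edge.'' Your explicit constructions cover only triangle rings and (implicitly) hamiltonian blocks; the non-hamiltonian $2$-connected case is untouched.

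The missing idea is the paper's first move in the proof of Theorem~\ref{t:23graphwfactor}: choose $F$ with the \emph{minimum} number of cycles among admissible $2$-factors, then \emph{delete} a maximal set $S'$ of chords joining distinct cycles of $F$ while keeping the graph connected, so that each cycle of $F$ becomes a hamiltonian block and the graph becomes a tree of hamiltonian blocks joined by bridges (adjusting one edge of $S'$ if the result happens to be bad). Lemma~\ref{l:blocktree} is then proved for such block-trees by induction on the number of blocks --- this is where your leaf-block peeling and the surgeries of Lemma~\ref{l:lift} genuinely belong, and where the hamiltonian single-block case is handled by a greedy walk along the hamilton cycle (Lemma~\ref{l:hamiltonian23}). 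Finally, the well-behaved property together with the minimality of the number of cycles of $F$ is what guarantees that the IPF of the edge-deleted graph is still induced after the deleted chords are restored: if a path contained both ends of a deleted chord, the bridge configuration forced by well-behavedness would let you merge two cycles of $F$ into one, contradicting minimality. Your write-up uses well-behavedness only to pass IPFs across bridges of $G$ itself, not to control the reinstated chords, so even if your $2$-connected case were repaired you would still need this reattachment argument.
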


The example in \fref{f:subcubic} shows that the condition about the
existence of the $2$-factor cannot be dropped from
\tref{t:23graphwfactor}.  Also, note that no cubic graph is bad and
hence \tref{t:23graphwfactor} establishes that any cubic graph $G$ on
at least $7$ vertices with an appropriate $2$-factor has
$\rho(G)\le(n-1)/3$.

Our strategy for building an IPF of a cubic graph $G$ will be to
identify a $2$-factor $F$ in $G$, and to discard some (but not all) of
the edges that join distinct cycles in $F$ so that each cycle in $F$
induces a block.  We then stitch together IPFs of these blocks. To
be efficient we need to allow some paths to include vertices from
more than one block. When this happens the edges that we initially
discarded could potentially cause our paths to not be induced in $G$.
However, by demanding that the constituent IPFs are well-behaved, we will
be able to circumvent this concern.

\tref{t:23graphwfactor} will follow with only a little work from
\lref{l:blocktree} below. Most of our effort in this section will be
devoted to proving \lref{l:blocktree}. The proof proceeds by induction
on the number of blocks in $G$, but we will first require a number of
preliminary results.

\begin{lemma}\label{l:blocktree}
Let $G$ be a connected $\{2,3\}$-graph of order $n \geq 6$ such that
each block of $G$ is a hamiltonian graph of order at least $5$
and the vertex sets of these blocks partition $V(G)$. Then
$G$ has a well-behaved IPF with at most $(n-1)/3$ paths if $n \geq 7$
and $G$ is not a bad graph, and $G$ has a well-behaved IPF with at
most $n/3$ paths otherwise.
\end{lemma}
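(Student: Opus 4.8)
The plan is to proceed by induction on the number of blocks of $G$, using the three surgeries from \lref{l:lift} to peel off a ``leaf block'' and reduce to a graph with one fewer block. Since the vertex sets of the blocks partition $V(G)$, the block-cut tree structure tells us that $G$ is built from hamiltonian blocks of order $\ge 5$ joined at cut vertices, with edges between blocks acting as bridges. I would first dispose of the base case, where $G$ is a single hamiltonian block of order $n \ge 6$: here I would exhibit an explicit well-behaved IPF directly. For a hamiltonian graph on $n$ vertices one can walk around the Hamilton cycle and break it into roughly $n/3$ induced paths (using $P_3$'s where possible and being careful that chords do not destroy inducedness); condition (i) of the well-behaved definition is automatic because the whole graph is a single block, so every path trivially has its degree-$\le2$ vertices inside one block.

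For the inductive step, I would pick a leaf block $B$ of the block-cut tree, meaning a block attached to the rest of $G$ through a single cut vertex $v$ (or through a single bridge). The idea is to detach $B$ from $G$, handle it using its own hamiltonicity, and splice the resulting IPF together with a well-behaved IPF of the smaller graph $G \setminus (B \setminus \{v\})$ obtained by induction. The surgeries of \lref{l:lift}---augmenting a triangle, pasting a $\Kfme$, and suppressing a vertex---are exactly the tools for converting the detached block and its attaching structure back and forth, while the well-behaved-except-on-$R$ bookkeeping guarantees that when we glue paths across the cut vertex or bridge, the degree-$2$ vertices that end up on a shared path still satisfy condition (i) or (ii). The key accounting is that each block of order $5$ contributes at most one path beyond a clean $1/3$ rate, and the hub (largest block, order $\ge 6$) is where we can save the extra path needed to get $(n-1)/3$ rather than $n/3$; a graph is exactly \emph{bad} when every block is forced to the worse rate and no such saving is available, which is why the bad graphs are the sole exceptions.

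The main obstacle, I expect, is the careful case analysis needed to show that the extra saving of one path is always available unless $G$ is bad. Concretely, the worst case is when all blocks are triangle-rings-worth of order-$3$ pieces (triangles) or order-$5$ attachments forced into the $n/3$ rate; I would need to show that in any non-bad configuration there is at least one block---typically the hub, using its order $\ge 6$ or the presence of a degree-$2$ vertex not in a triangle---where the walk around its Hamilton cycle can be broken to produce one fewer path, yielding the $(n-1)/3$ bound. Managing the interaction between this saving and the well-behaved constraint at the gluing vertex, so that the saved path does not violate condition (i)/(ii), is the delicate part; I would organise it by always arranging the spliced path to pass straight through the cut vertex with the bridge edges as the $[x,x',y',y]$ subpath guaranteed by \lref{l:lift}, so that condition (ii) is met by construction. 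The divisibility of $n$ by $3$ for bad graphs, noted right after the definition of bad, is what makes the two target bounds $n/3$ and $(n-1)/3$ consistent across the induction.
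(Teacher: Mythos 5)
Your overall strategy---induction on the number of blocks, a hamiltonian base case built by walking the Hamilton cycle, and the surgeries of \lref{l:lift} to splice a detached block back on---is the same skeleton the paper uses, and your observation that condition (i) of well-behavedness is automatic for a single block is correct. But as a proof the plan has a genuine gap at the inductive step. ``Detach a leaf block $B$ at cut vertex/bridge $v$, apply induction to the rest, and splice'' does not close the accounting: a block $L$ of order $5$ needs $2>5/3$ paths, so you must merge one of its paths across the bridge with a path of the smaller graph's IPF that \emph{ends at the attachment vertex} $x_1$. The induction hypothesis gives you no control over where paths end. The paper manufactures such a path by modifying the smaller graph \emph{before} applying induction: deleting $x_1$ when it lies in a triangle of its block (so the block minus $x_1$ is still hamiltonian), or suppressing $x_1$ otherwise --- and \lref{l:lift}(iii) then costs an extra path, which is only affordable when the modified graph meets the stronger $(n'-1)/3$ bound, i.e.\ is not bad. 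The case where the suppressed graph \emph{is} bad while $G$ is not (the paper's Case~4) is precisely the crux, and it is resolved by locating the suppressed edge relative to the triangle-ring hub and choosing a different vertex set to delete; nothing in your plan anticipates this, and without it the induction simply does not go through.

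A second missing ingredient is the structural reduction that makes the case analysis finite. The paper does not induct on an arbitrary leaf block: it first shows (Lemmas~\ref{l:bridgeobvious} and \ref{l:bridgeleaf}) that in a minimal counterexample \emph{every} bridge has a block of order exactly $5$ on one side, which forces the counterexample into a ``star'' shape --- one central block $C$ with $t$ pendant order-$5$ blocks --- and only then runs four cases on how the attachment points sit in $C$ (adjacent, $|V(C)|=5$, in a triangle, not in a triangle). Your plan replaces this with the heuristic that ``each block of order $5$ contributes at most one path beyond a clean $1/3$ rate and the hub is where we save,'' which is roughly the right intuition but is not a proof; in particular the problematic bridges are those separating two order-$6$ blocks or a bad subgraph from a small one, where the naive additive bound gives $n/3$ rather than $(n-1)/3$. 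Also note a small but telling confusion: blocks here have order at least $5$ by hypothesis, so the ``worst case where all blocks are triangles'' does not occur --- the triangles live \emph{inside} the hub of a bad graph, not as separate blocks.
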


We begin with three lemmas on IPFs of small hamiltonian $\{2,3\}$-graphs.

\begin{lemma}\label{l:hamorder5}
Let $C$ be a hamiltonian $\{2,3\}$-graph of order $5$. For any vertex
$x$ of degree $2$ in $C$, there is an IPF of $C$ with two paths such
that one path ends at $x$ and every other vertex of this path has
degree $3$ in $C$.
\end{lemma}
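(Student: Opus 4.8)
The plan is to exploit the fact that a hamiltonian $\{2,3\}$-graph on only five vertices is almost completely pinned down by its Hamiltonian cycle. First I would fix a Hamiltonian cycle $(x_1,x_2,x_3,x_4,x_5)$ of $C$ with $x=x_1$, and call the edges of $C$ not on this cycle its \emph{chords}. Since the cycle already contributes $2$ to every degree and $C$ is subcubic, each vertex lies on at most one chord, so the chords form a matching; and since $\deg_C(x_1)=2$, no chord meets $x_1$. Hence there are at most two chords, each a ``diagonal'' from the list $x_2x_4$, $x_2x_5$, $x_3x_5$, and I would split into cases according to how many chords there are.

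The construction in every case is to cut the cycle into two arcs, using one arc (together with $x_1$) as the path $P_1$ that ends at $x_1$ and the other arc as $P_2$; the only thing to verify is that each arc is an \emph{induced} path, which amounts to checking that the unique potential chord joining the two ends of the arc is absent. When there are no chords, $C=C_5$ and I would take $P_1=[x_1]$ and $P_2=[x_2,x_3,x_4,x_5]$ (induced because $x_2x_5\notin E(C)$); the condition on degree-$3$ vertices of $P_1$ then holds vacuously. When there is exactly one chord, that chord---being a diagonal avoiding $x_1$---must be incident with $x_2$ or $x_5$, so after using the reflection $x_2\leftrightarrow x_5,\ x_3\leftrightarrow x_4$ I may assume it meets $x_2$; then $P_1=[x_2,x_1]$ and $P_2=[x_3,x_4,x_5]$ works, the latter being induced because the one chord is incident with $x_2$ and so $x_3x_5\notin E(C)$.

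The case requiring the most care, and the one I expect to be the main obstacle, is when there are two chords. Then $x_1$ is the \emph{only} degree-$2$ vertex, and the two chords are forced to be $\{x_2x_4,x_3x_5\}$ (the sole $2$-matching of diagonals missing $x_1$). Here the naive split $P_1=[x_2,x_1]$, $P_2=[x_3,x_4,x_5]$ fails, because $x_3x_5$ is now a chord and $P_2$ is not induced. Instead I would take the asymmetric split $P_1=[x_4,x_5,x_1]$ and $P_2=[x_2,x_3]$: the path $P_1$ is induced since $x_1x_4\notin E(C)$, its non-$x_1$ vertices $x_4,x_5$ both have degree $3$, and $P_2$ is a single edge. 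In each case $P_1$ and $P_2$ partition $V(C)$ into two induced paths with $P_1$ ending at $x=x_1$ and all of its other vertices of degree $3$, which is exactly what is required.
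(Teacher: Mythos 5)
Your proof is correct and takes essentially the same approach as the paper: both split the Hamiltonian cycle into two induced arcs, with the arc ending at $x$ chosen to contain exactly one endpoint of each chord (so it is induced and its non-$x$ vertices have degree $3$). You simply make the paper's one-line ``shortest path from $x$ around the cycle meeting every chord'' construction explicit by a case analysis on the number of chords, which checks out in all cases.
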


\begin{proof}
Let $C'$ be a hamilton cycle in $C$.  For our first path, we take a
shortest path from $x$ around $C'$ that includes one vertex of each
chord of $C'$. The second path also follows $C'$, and joins the
vertices not appearing in the first path.
\end{proof}

\begin{lemma}\label{l:hamorder6}
Let $C$ be a hamiltonian $\{2,3\}$-graph of order $6$. Then $C$ has an
IPF with two paths. Furthermore, for any vertex $x$ of degree $2$ in
$C$, there is an IPF of $C$ with two paths such that one path ends at
$x$ and any other vertices on this path have degree $3$ in $C$ with
the possible exception of the vertex adjacent to $x$ in the path.
\end{lemma}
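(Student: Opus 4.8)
The plan is to start from a fixed hamilton cycle $(v_1,\dots,v_6)$ of $C$ and to use that, since $C$ is subcubic, its chords (the non-cycle edges) form a matching; hence each vertex lies in at most one chord, and the degree-$2$ vertices of $C$ are exactly those in no chord. Every way of cutting the cycle into two opposite arcs of three vertices gives a candidate IPF $\{[v_i,v_{i+1},v_{i+2}],[v_{i+3},v_{i+4},v_{i+5}]\}$ (indices read modulo $6$), and such a pair consists of two induced copies of $P_3$ precisely when neither arc contains a chord, i.e.\ when neither of the two ``internal'' pairs $v_iv_{i+2}$ and $v_{i+3}v_{i+5}$ is an edge.

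For the first assertion I would argue that a good cut always exists. There are three distinct cuts, corresponding to the three ways of pairing the six possible short chords $\{v_j,v_{j+2}\}$ into opposite pairs, and a cut is blocked only if one of its two short-chord slots is actually present. A matching can contain at most two short chords, since two disjoint short chords always leave an antipodal vertex pair, which admits only a long chord. Hence at most two of the three cuts are blocked, at least one survives, and this yields the required $2$-path IPF. This already settles the first sentence for every order-$6$ hamiltonian $\{2,3\}$-graph, including the two cubic ones.

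For the ``furthermore'' part, fix a degree-$2$ vertex and relabel so that it is $v_1$; then $v_1$ is in no chord, the chords match among $\{v_2,\dots,v_6\}$, and there are at most two of them. The only cuts leaving $v_1$ as an endpoint of its own arc are $[v_1,v_2,v_3]$ and $[v_1,v_6,v_5]$, and I would try to use one of these with its far endpoint ($v_3$, respectively $v_5$) having degree $3$; the middle vertex is the neighbour of $v_1$, which the statement permits to have degree $2$. When $C$ has two chords, a short check shows one of the two cuts works: at least one of $v_3,v_5$ is cubic (otherwise $v_1,v_3,v_5$ are all degree $2$, forcing both chords into $\{v_2,v_4,v_6\}$, impossible for a matching on three vertices), and, using that each vertex lies in at most one chord, the chord that would spoil the complementary arc (namely $v_4v_6$ for the first cut or $v_2v_4$ for the second) cannot be present in the good direction.

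The fiddly part, and the main obstacle, is the low-chord regime, where both far endpoints $v_3,v_5$ may have degree $2$ or the relevant cut is blocked. Here I would fall back to shorter paths and lean on the neighbour exception: for $C=C_6$ take $\{[v_1],[v_2,v_3,v_4,v_5,v_6]\}$, and when a single chord sits so that $[v_1,v_2,v_3]$ and $[v_1,v_6,v_5]$ both fail, take $\{[v_1,v_2],[v_3,v_4,v_5,v_6]\}$ or its mirror, choosing the direction whose length-$4$ arc is chord-free. In each fallback the path ending at $v_1$ has no interior vertex beyond its neighbour, so the degree condition holds vacuously, and the only genuine check is that the complementary arc contains no chord and is therefore an induced path. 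Assembling these cases completes the proof.
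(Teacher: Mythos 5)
Your argument is correct in substance but takes a genuinely different route from the paper. The paper disposes of the cubic case by inspecting the two cubic graphs of order $6$, and for the case with a degree-$2$ vertex it simply reuses the greedy construction of \lref{l:hamorder5}: walk around the hamilton cycle from $x$ along a shortest path that meets one endpoint of every chord, and let the second path be the complementary arc. Your proof instead classifies the chords (a matching, since $C$ is subcubic) and runs an explicit case analysis over the three $3{+}3$ cuts of the cycle, falling back to a $1{+}5$ or $2{+}4$ split in the low-chord cases; the counting of ``short'' chords that shows at least one cut survives is a nice self-contained way to get the first assertion, and it handles the two cubic graphs uniformly rather than by inspection. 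The paper's approach is shorter and transfers verbatim between orders $5$, $6$ and $7$; yours is more elementary and makes the role of the neighbour-of-$x$ exception visible (it is exactly what rescues the fallback splits). One step you should tighten: in the two-chord case your sentence ``the chord that would spoil the complementary arc cannot be present in the good direction'' is false if ``good direction'' just means ``a direction whose far endpoint is cubic'' --- for the chord set $\{v_3v_5,v_4v_6\}$ the vertex $v_3$ is cubic yet the spoiler $v_4v_6$ is present, so the cut $\{[v_1,v_2,v_3],[v_4,v_5,v_6]\}$ fails. The repair is exactly the matching property you invoke, but it needs to be said: the two spoilers $v_4v_6$ and $v_2v_4$ share $v_4$, so at most one cut is spoiled, and if $v_4v_6$ (resp.\ $v_2v_4$) is present then the other chord must cover $v_5$ (resp.\ $v_3$), so the unspoiled cut automatically has a cubic far endpoint. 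Also, your claim that two disjoint short chords ``leave an antipodal pair'' fails for, say, $v_1v_3$ and $v_2v_4$, which leave the adjacent pair $\{v_5,v_6\}$; the conclusion (no third short chord) still holds because that pair is joined by a cycle edge, but the justification should cover both subcases.
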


\begin{proof}
If $C$ is cubic, then it is easy to find an IPF with two paths in each
of the two possible cases for $C$.
If $C$ has a vertex of degree $2$, then we use exactly the same strategy
articulated in the proof of \lref{l:hamorder5}.
\end{proof}

\begin{lemma}\label{l:hamorder7}
Let $C$ be a hamiltonian $\{2,3\}$-graph of order $7$. Let $p=3$ if
$C$ is obtained from a triangle ring of order $6$ by subdividing an
edge that is not in a triangle, and let $p=2$ otherwise. For any
vertex $x$ of degree $2$ in $C$, there is an IPF of $C$ with $p$ paths
such that one path ends at $x$.
\end{lemma}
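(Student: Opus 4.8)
The plan is to fix a Hamilton cycle $C'=(v_1,\dots,v_7)$ of $C$ with $v_1=x$ and to exploit the fact that, since every vertex has degree $2$ or $3$ and already uses both of its cycle-edges, each vertex lies on at most one chord. Hence the chords of $C$ (the edges of $C$ not on $C'$) form a matching; writing $m$ for its size, the degree-$3$ vertices are exactly the $2m$ matched ones and there are $7-2m$ vertices of degree $2$, so $m\in\{0,1,2,3\}$. For each degree-$2$ vertex $x$ I will build an IPF with the prescribed number of paths, one of them ending at $x$. Since splitting a path at an interior vertex raises the path-count by one while preserving the endpoint $x$, it suffices to produce an IPF with \emph{at most} $p$ paths ending at $x$ and then pad to exactly $p$ if required.

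The main device is \emph{arc-cutting}: deleting two edges of $C'$ splits it into two arcs, and each arc is an induced path of $C$ precisely when no chord has both endpoints on the same arc. If two such cut-edges can be chosen with $x$ as an endpoint of an arc, we obtain a $2$-path IPF ending at $x$. For $m=0$ (so $C=C_7$) any cut incident to $x$ works. For $m=1$, with chord $uv$, the vertex $x$ lies strictly inside one of the two cycle-arcs bounded by $u$ and $v$; cutting one edge at $x$ and one edge on the opposite arc separates the chord and leaves $x$ an endpoint. Both cases give $p=2$.

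The crux is $m=2$. If the two chords cross, placing the two cut-edges in ``opposite'' arcs separates both, and one checks this can always be arranged with $x$ an endpoint; if the chords do not cross, arc-cutting still succeeds for every degree-$2$ vertex \emph{unless} a degree-$2$ vertex is trapped strictly between the two chords while both chords are triangle-chords. I expect to show that this trapped configuration occurs exactly when $C$ is obtained from the triangle ring of order $6$ by subdividing a non-triangle edge, and that the trapped vertex is the subdivision vertex $w$. For that graph and $x=w$ I will prove that no $2$-path IPF ends at $w$: the degree-$2$ vertex of each triangle must be a path-end (being internal would force all three vertices of its triangle onto one path, which is not induced), and a short case analysis on which neighbour of $w$ its path uses shows a third path is unavoidable; so $p=3$, and an explicit $3$-path IPF ending at $w$ (and, after padding, at the other two degree-$2$ vertices) finishes this graph.

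Finally, for $m=3$ there is a unique degree-$2$ vertex $x$, and arc-cutting alone may fail to place it as an arc-endpoint while separating all three chords. Here I will supplement arc-cutting by allowing a chord to serve as an edge \emph{inside} a path (so that the triangle it creates is broken by ending another path at the third triangle-vertex), which yields a $2$-path IPF ending at $x$; this has to be verified on the few isomorphism types consisting of the Hamilton cycle together with a perfect matching on six of its vertices. The main obstacle throughout is the $m\in\{2,3\}$ bookkeeping: proving that a separating cut (or a chord-assisted pair of paths) exists for \emph{every} prescribed degree-$2$ endpoint, and establishing that the subdivided triangle ring is the unique obstruction forcing $p=3$. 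I would organise this as a finite check over the isomorphism types of the chord-matching, using arc-cutting to dispatch all but the special graph.
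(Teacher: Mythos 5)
Your plan is sound and, as far as I can check, every deferred claim in it is true, but it is organised quite differently from the paper's proof. The paper simply writes the Hamilton cycle as $(x,x_1,\ldots,x_6)$ with the prescribed degree-$2$ vertex first, observes (implicitly, via your own matching observation) that the chords form a matching on $\{x_1,\ldots,x_6\}$, and then does a four-way case split on which of the two specific chords $x_1x_3$ and $x_4x_6$ are present, writing down an explicit two- or three-path IPF in each case; the matching property makes each verification one line. You instead case on the size $m$ of the chord matching and its crossing pattern, with a general ``arc-cutting'' principle. Both work, and your structural analysis is correct: a counting argument (the two chord segments have at least two cycle-edges each, and the two gaps have at least one each, totalling seven) shows a degree-$2$ vertex can be ``trapped'' only when both chords are triangle chords with gaps of sizes $1$ and $2$, which is exactly the subdivided triangle ring with the subdivision vertex trapped; and your worry about $m=3$ is justified, since pure arc-cutting genuinely fails to make $x$ an endpoint for the matching $\{x_1x_3,x_2x_5,x_4x_6\}$ (the only one of the four reflection-classes of admissible matchings where it fails), and your chord-assisted fix is precisely the paper's path $[x,x_1,x_2,x_5]$ together with $[x_3,x_4,x_6]$. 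Two remarks: your argument that no two-path IPF ends at the subdivision vertex, while correct and illuminating, is not actually needed, because the lemma only asserts the \emph{existence} of a $p$-path IPF (and your padding reduction already covers exactly $p$ paths); and your proposal leaves the $m=2$ characterisation and the $m=3$ enumeration as checks to be performed, so to be a complete proof those finitely many verifications would still need to be written out. The paper's labelling trick buys a much shorter write-up; your route buys a structural explanation of why the subdivided triangle ring is the unique exception.
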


\begin{proof}
Let $(x,x_1,x_2,\ldots,x_6)$ be a hamilton cycle in $C$.
If $\{x_1x_3,x_4x_6,x_2x_5\}\subseteq E(C)$, then we may use
$\big\{[x,x_1,x_2,x_5],\,[x_3,x_4,x_6]\big\}$ as our IPF.
Otherwise, if $\{x_1x_3,x_4x_6\}\subseteq E(C)$, then $p=3$ and we may use
$\big\{[x,x_1],\,[x_2,x_3,x_4],\,[x_5,x_6]\big\}$ as our IPF.
If only one of the edges
$x_1x_3$ and $x_4x_6$ is in $E(C)$, then by symmetry we may assume it is
$x_1x_3$. In that case, we may take
$\big\{[x,x_1,x_2],\,[x_3,x_4,x_5,x_6]\big\}$ as our IPF. Finally, if
$\{x_1x_3,x_4x_6\}\cap E(C)=\emptyset$, then we may use
$\big\{[x,x_1,x_2,x_3],\,[x_4,x_5,x_6]\big\}$ as our IPF.
\end{proof}

From Lemmas~\ref{l:hamorder5},~\ref{l:hamorder6} and
\ref{l:hamorder7}, we can easily prove the following result concerning
small $\{2,3\}$-graphs with two blocks.

\begin{lemma}\label{l:twoblocks}
Let $G$ be a $\{2,3\}$-graph of order $n \leq 12$ consisting of two
hamiltonian blocks, each of order at least $5$, with a bridge between
them. Then either $G$ has a well-behaved IPF with three paths or $G$
is a bad graph of order $12$ with a well-behaved IPF consisting of $4$
paths.
\end{lemma}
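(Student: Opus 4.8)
The plan is to exploit the bridge. Write $G=A\cup B$, where $A$ and $B$ are the two blocks and $uv$ is the bridge with $u\in V(A)$ and $v\in V(B)$. Since $A$ and $B$ are $2$-connected of order at least $5$, they are induced subgraphs of $G$ with $\deg_A(u)=\deg_B(v)=2$ and $\deg_G(u)=\deg_G(v)=3$. As each block has order at least $5$ and $n\le 12$, the two block orders lie in $\{5,6,7\}$ and form one of $(5,5)$, $(5,6)$, $(6,6)$, $(5,7)$ (up to swapping $A$ and $B$). The core construction is: use \lref{l:hamorder5}, \lref{l:hamorder6} and \lref{l:hamorder7} to obtain an IPF $\P_A$ of $A$ containing a path ending at $u$ and an IPF $\P_B$ of $B$ containing a path ending at $v$, and set $\P=\P_A\cup\P_B\cup\{uv\}$ as a set of edges. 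Merging the two paths across the bridge saves one path, so $\nump{\P}=\nump{\P_A}+\nump{\P_B}-1$; and because $uv$ is the only edge between $A$ and $B$ while both blocks are induced, the merged path is induced and $\P$ is an IPF of $G$.

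Well-behavedness I would get from the degree control in the small-block lemmas. Let $S=\{x:\deg_G(x)\le2\}$ and note $u,v\notin S$. Every unmerged path lies inside a single block, hence satisfies clause (i) of the definition automatically. For the merged path $P^*$, any $S$-vertex coming from an order-$7$ part stays within that block, while Lemmas~\ref{l:hamorder5} and~\ref{l:hamorder6} ensure that an order-$5$ part contributes no $S$-vertex and an order-$6$ part contributes at most the bridge-neighbour of its endpoint. Hence $V(P^*)\cap S$ lies in a single block, giving clause (i), except in the $(6,6)$ case when both bridge-neighbours $x\in A$ and $y\in B$ lie in $S$; there $[x,u,v,y]$ is a subpath of $P^*$ with $uv$ a bridge and $V(P^*)\cap S=\{x,y\}$, which is exactly clause (ii).

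For counting, in $(5,5)$, $(5,6)$, $(6,6)$, and $(5,7)$ with the order-$7$ block not the special graph of \lref{l:hamorder7}, both blocks contribute $2$-path IPFs, so $\nump{\P}=3$ and we land in the first alternative. The remaining possibility is that one block, say $B$, is the special order-$7$ graph, which forces $A$ to have order $5$ and, by \lref{l:hamorder7}, gives $p=3$ for $B$. I would split this by where the bridge meets $B$. If $v$ is the unique degree-$2$ vertex of $B$ lying in no triangle (its subdivision vertex), then $G$ is precisely a bad graph of order $12$ with hub $B$, and the merge gives $\nump{\P}=2+3-1=4$; the $A$-part contributes no vertex of $S$, so $V(P^*)\cap S\subseteq V(B)$ and clause (i) holds. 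This is the second alternative. If instead $v$ lies in a triangle of $B$, then $G$ is not bad, because the subdivision vertex of $B$ is then a degree-$2$ vertex in a block of order at least $6$ lying in no triangle, which no bad graph contains; so I must produce a $3$-path IPF. Here I abandon the black-box $3$-path IPF of $B$ and instead route the path leaving $u$ across the bridge into $B$: the merged path is extended to an induced $P_4$ that takes two vertices from each triangle of $B$ (avoiding the subdivision vertex), while the three remaining vertices of $B$ — one from each triangle together with the subdivision vertex between them — form a single induced $P_3$. With the second path of $\P_A$ this is a $3$-path IPF, and since the only vertex of $S$ on any path sits inside a single block, it is well-behaved.

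The main obstacle is exactly this last split. The cases where both blocks admit $2$-path IPFs reduce mechanically to merging the IPFs supplied by Lemmas~\ref{l:hamorder5}--\ref{l:hamorder7}. The work is in the special order-$7$ block: recognising that it yields a bad graph only when the bridge attaches at its non-triangle degree-$2$ vertex, and, when the bridge attaches within a triangle, replacing the naive merge — which wastes a path and would give $4$ — by a rerouting that drives the bridge-path deep into $B$ so that exactly one induced $P_3$ remains. Checking that this rerouted path is genuinely induced and well-behaved, and that the bad/non-bad dichotomy is captured precisely by the ``degree-$2$ vertex in a block of order at least $6$ lying in no triangle'' criterion, is where the care lies.
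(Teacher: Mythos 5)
Your proof is correct and follows essentially the same route as the paper's: split at the bridge, apply Lemmas~\ref{l:hamorder5}--\ref{l:hamorder7} to the two blocks, and merge the two paths ending at the bridge endpoints, with well-behavedness coming from the degree control in those lemmas. The only divergence is that in the $(7,5)$ case you explicitly distinguish where the bridge meets the special order-$7$ block and construct a bespoke $3$-path IPF when it attaches inside a triangle, whereas the paper absorbs this into the assertion that $\nump{\P_1}=3$ forces $G$ to be bad; your extra care at that point is harmless and, if anything, makes the argument more watertight.
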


\begin{proof}
Let the two hamiltonian blocks of $G$ be $G_1$ and $G_2$. Let
$n_i=|V(G_i)|$ for $i \in \{1,2\}$, and assume $n_1 \geq n_2$ without
loss of generality. Then $(n_1,n_2) \in
\{(5,5),(6,5),(7,5),(6,6)\}$. Let $x_1x_2$ be the bridge in $G$ where
$x_i \in V(G_i)$ for $i \in \{1,2\}$. For $i \in \{1,2\}$ use
\lref{l:hamorder5}, \ref{l:hamorder6} or \ref{l:hamorder7} as
appropriate to create an IPF $\P_i$ of $G_i$ with one path ending at
$x_i$. Then $\P=\P_1\cup \P_2 \cup \{x_1x_2\}$ is an IPF of $G$. If
$\nump{\P_1}=3$ then $G$ is a bad graph, $(n_1,n_2)=(7,5)$ and $\nump{\P}=4$. In
all other cases, $\nump{\P}=3$. If $n_2=5$, then \lref{l:hamorder5} ensures
that each path in $\P$ obeys (i) in the definition of well-behaved. If
$(n_1,n_2)=(6,6)$, then \lref{l:hamorder6} ensures that each path in
$\P$ obeys either (i) or (ii) in the definition of well-behaved.
\end{proof}

We now prove a more general result for hamiltonian $\{2,3\}$-graphs.
Note that triangle rings are the only bad hamiltonian graphs.

\begin{lemma}\label{l:hamiltonian23}
A hamiltonian $\{2,3\}$-graph $G$ of order $n \geq 6$ has
$\rho(G)\le(n-1)/3$ if $n \geq 7$ and $G$ is not a bad graph,
and has $\rho(G)\le n/3$ otherwise.
\end{lemma}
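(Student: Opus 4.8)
The plan is to work directly with a fixed hamilton cycle $C=(x_1,\ldots,x_n)$ of $G$. Since $G$ is a $\{2,3\}$-graph, its two cycle edges already account for degree $2$ at every vertex, so each vertex lies on at most one chord of $C$; thus the chords form a matching $M$. Call a set of consecutive vertices of $C$ a \emph{clean arc} if no chord of $M$ joins two of them. Any clean arc that is not all of $V(G)$ is an induced path of $G$ (its end vertices are neither cycle-adjacent nor chord-adjacent), and a partition of $V(G)$ into clean arcs is an IPF whose number of paths equals the number of arcs. So it suffices to partition $C$ into few clean arcs: at most $n/3$ arcs in general, and at most $(n-1)/3$ arcs when $G$ is not a triangle ring (recall that a triangle ring is the only bad hamiltonian graph).

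The main tool is a greedy sweep around $C$. Starting from a carefully chosen cut, I would grow each arc forwards, vertex by vertex, cutting (ending the current arc and starting a new one) exactly when the next vertex would complete a chord with a vertex already in the arc. The key sub-claim is that every arc has length at least $3$. The point is that each arc other than the first begins at a vertex $v$ whose matched partner lies in an earlier arc, hence \emph{behind} $v$ on $C$; consequently no chord joins two of $v$, $v^+$, $v^{++}$ (the cycle edge $v^+v^{++}$ is not a chord, and $v$ already spends its unique chord going backwards), so the sweep can always absorb these three vertices. This yields at most $n/3$ arcs and hence $\rho(G)\le n/3$, which already settles the bad case and the base case $n=6$ (where \lref{l:hamorder6} may alternatively be invoked).

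To gain the extra saving when $G$ is not a triangle ring, I would examine exactly when the sweep is forced to stop an arc at length precisely $3$. Inspecting the cutting rule shows this occurs exactly when the second vertex of the arc is a degree-$3$ vertex forming a triangle with the following two vertices. If \emph{every} arc has length exactly $3$, these triangles tile $C$ in consecutive blocks of three joined by single edges between successive blocks -- which is precisely the cyclic pattern defining a triangle ring. Hence if $G$ is not a triangle ring, some arc has length at least $4$; since all arcs have length at least $3$, the total length $n$ is at least $3d+1$, where $d$ is the number of arcs, giving $d\le(n-1)/3$ and therefore $\rho(G)\le(n-1)/3$.

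The delicate point -- and the step I expect to be the main obstacle -- is the cyclic seam: because $C$ is a cycle rather than a path, the first and last arcs meet, so I must choose the initial cut so that neither of them is forced below length $3$ and so that the guaranteed length-$4$ arc is not lost at the seam. I would handle this by seeding the sweep at a well-chosen location (for instance immediately after a chord endpoint, or at a degree-$2$ vertex not in a triangle when one exists -- exactly the configuration that certifies $G$ is not bad), and by disposing of the few chord-sparse and small configurations separately, including the order-$7$ graphs, where \lref{l:hamorder7} together with a direct check handles the single exceptional graph. Making the argument \textquotedblleft all arcs have length $3$ forces the triangle-ring structure\textquotedblright\ fully rigorous, in a way compatible with the seam, is where the real care is needed.
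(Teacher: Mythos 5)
Your overall strategy --- sweep greedily around a hamilton cycle, cutting a new induced arc whenever the next vertex closes a chord back into the current arc, and count arcs --- is exactly the paper's strategy, and your observations that the chords form a matching and that every arc whose first vertex sends its chord backwards can absorb at least three vertices are both correct and are the same observations the paper uses. However, there are two gaps, one of which you have not acknowledged and which is fatal to the $(n-1)/3$ part of the argument. The claim ``if every arc has length exactly $3$ then the triangles tile $C$ and $G$ is a triangle ring'' is false. When every arc is cut at length $3$, the sweep certifies only the chords $x_{i+1}x_{i+3}$ that triggered the cuts; this accounts for the chord at two of every three vertices, and the remaining third of the vertices (one per arc, the arc-starting positions after relabelling) are free to carry additional chords among themselves without ever triggering a cut. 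Such a graph contains a triangle ring as a spanning subgraph but is not a triangle ring. Concretely, take the triangle ring of order $12$ with chords $x_{12}x_2,x_3x_5,x_6x_8,x_9x_{11}$ and add the chord $x_1x_7$: every seeding of your sweep still produces arcs summing to $12$ with a short leftover arc, and in fact this graph admits \emph{no} partition of the cycle into $3$ clean arcs, so no choice of seed rescues your method --- the bound $(n-1)/3$ can only be met by an IPF that uses chord edges in its paths. The paper handles precisely this residual case with a separate construction: it starts from the canonical $n/3$-path IPF $E(F)\setminus\{x_ix_{i+1}:i\equiv1\!\!\mod 3\}$ and reroutes locally through the extra chord $x_ax_b$ (deleting $x_{a-1}x_a,x_{b-1}x_b$ and inserting $x_{a-1}x_{a+1},x_{b-1}x_{b+1},x_ax_b$) to save one path. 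Your proposal contains no analogue of this step and cannot conclude without one.

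The second gap is the seam, which you do flag, but you understate its role: it is needed even for the $n/3$ bound, not just for the improvement to $(n-1)/3$. Your length-$\geq3$ guarantee applies only to arcs that \emph{begin} because of a backward chord; the final arc ends simply because the vertices run out, so it may have length $1$ or $2$, and with all other arcs of length exactly $3$ this already pushes the count above $n/3$ when $n\equiv1\pmod 3$. The paper's resolution is not merely to seed ``after a chord endpoint'' but to label so that $x_nx_k$ is a \emph{shortest} chord of the cycle, with two further orientation/shift conditions when $k\in\{2,3\}$; this forces the first arc to have at least $4$ vertices and, because $x_n$'s chord lands inside the first arc, forces the last arc to have at least $2$, giving $n\geq 4+3(\nump{\P}-2)+2$. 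Carrying out your plan rigorously would require reproducing something equivalent to this labelling argument, plus the rerouting construction above; as written, the proposal establishes neither bound.
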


\begin{proof}
  If $n=6$, the result follows from \lref{l:hamorder6}, so assume $n \geq 7$. Let $F$ be a hamilton cycle in $G$. If $G=F$ the result follows easily, so assume $F$ is a proper subgraph of $G$. We can label $F$ as $(x_1,\ldots,x_n)$ such that $x_nx_k$ is a shortest chord of $F$ in $G$ where $k \in \{2,\ldots,\lfloor n/2 \rfloor\}$. If $k=2$, we can further assume that $x_1$ is not adjacent in $G$ to any vertex in $\{x_3,\ldots,x_{\lfloor (n+1)/2 \rfloor}\}$ (if this is not satisfied, reassign the labels $x_2,\ldots,x_n$ in the opposite orientation around $F$). If $k=3$, we can further assume that $x_1x_4\notin E(G)$. (If $x_1x_4 \in E(G)$ but $x_2x_5 \notin E(G)$, then rotate the labels by one position around $F$. If $\{x_1x_4,x_2x_5\} \subseteq E(G)$ then, noting $x_3x_6 \notin E(G)$, rotate the labels by two positions.)

For $k\ge2$, we now construct an IPF $\P$ of $G$ using a greedy algorithm. We add paths one at a time, at each stage taking a path $[x_i,x_{i+1},\ldots,x_j]$ such that $i$ is the smallest element of $\{1,\ldots,n\}$ for which $x_i$ is not already in a path, and $j$ is the largest element of $\{i,\ldots,n\}$ such that $[x_i,x_{i+1},\ldots,x_j]$ is induced in $G$. We will establish the following:
\begin{itemize}
\item[(i)]
  the first path added to $\P$ has at least $4$ vertices and it has exactly $4$ if and only if $x_ax_5 \in E(G)$ for some $a \in \{1,2,3\}$;
\item[(ii)]
  if $\nump{\P} \geq 3$, then the final path added to $\P$ has at least $2$ vertices;
\item[(iii)] other than the first and last paths added, each path
  $[x_i,x_{i+1}\ldots,x_j]$ in $\P$ has at least $3$ vertices and
  has exactly $3$ if and only if $x_{i+1}x_{i+3} \in E(G)$.
\end{itemize}
The properties of our labelling $(x_1,\ldots,x_n)$ ensure that (i) holds (recall in particular that $x_nx_k$ is a shortest chord of $F$). That (iii) is satisfied follows from the fact that, by our greedy algorithm, for each path $[x_i,\ldots,x_j]$ in $\P$, there is a chord from $x_i$ to a vertex in the path added just prior to $[x_i,\ldots,x_j]$. Similarly, because $x_nx_k \in E(G)$ and $x_k$ is in the first path added to $\P$, there is not a path $[x_i,x_{i+1},\ldots,x_{n-1}]$ in $\P$ for any $i \in \{2,\ldots,x_{n-1}\}$ and (ii) follows.

From (i), (ii) and (iii) we see immediately that $\nump{\P} \leq n/3$. If $\nump{\P} \leq (n-1)/3$ or if $G$ is a triangle ring, then the proof is complete, so assume that $\nump{\P} = n/3$ and $G$ is not a triangle ring. In this remaining case we give an alternative construction for an IPF $\P''$ of $G$ that satisfies the conditions of the lemma. Because $\nump{\P} = n/3$ and $n \geq 7$, it must be that $\nump{\P} \geq 3$ and that the first path added to $\P$ has exactly $4$ vertices, the final path has exactly $2$ vertices and each other path has exactly $3$ vertices. So, by (iii), $\{x_6x_8,x_9x_{11}\ldots,x_{n-3}x_{n-1}\} \subseteq E(G)$. Thus $k=2$ because $x_nx_k$ is a shortest chord of $F$ in $G$. Then, from (i) and the properties of our labelling $(x_1,\ldots,x_n)$, we have that $x_nx_2,x_3x_5 \in E(G)$. This establishes that a triangle ring is a subgraph of $G$ (note that the labelling given in the definition of triangle ring matches our labelling of $G$). By assumption $G$ is not a triangle ring and so there must be an edge $x_ax_b$ in $E(G)$ where $a,b \in \{1,4,7,\ldots,n-2\}$.

If $n=9$ then without loss of generality $a=1$, $b=4$ and we may take
$\P''=\{[x_8,x_9,x_1,x_4],$ $[x_2,x_3,x_5,x_6,x_7]\}$ as our IPF. Henceforth
we may assume that $n\ge12$.  Note that $\P'=E(F)\setminus
\{x_ix_{i+1}:i\in\{1,4,7,\ldots,n-2\}\}$ is an IPF of $G$ with
$\nump{\P'}=n/3$. Let $\P''$ be obtained from $\P'$ by removing the edges
$\{x_{a-1}x_a,x_{b-1}x_b\}$ and adding the edges
$\{x_{a-1}x_{a+1},x_{b-1}x_{b+1},x_ax_b\}$ where we consider
subscripts modulo $n$. As $n>9$, it can be seen that $\P''$ is an IPF
of $G$ with $\nump{\P''}=\nump{\P'}-1=(n-3)/3$. This completes the proof.
\end{proof}

We require two more lemmas before we can complete our proof of
\lref{l:blocktree}. Both concern the structure of a putative minimal
counterexample. Note that by \lref{l:hamiltonian23} we know such a
counterexample has at least two blocks, and hence contains a bridge.

\begin{lemma}\label{l:bridgeobvious}
Let $G$ be a counterexample to \lref{l:blocktree} with the minimum
number of blocks. Let $x_1x_2$ be a bridge in $G$ and let $G_1$ and
$G_2$ be the components of $G-\{x_1x_2\}$. Then either
\begin{itemize}
    \item[(i)]
$|V(G_1)|=5$ or $|V(G_2)|=5$; or
    \item[(ii)]
for each $i \in \{1,2\}$, either $|V(G_i)|=6$ or $G_i$ is a bad graph.
\end{itemize}
\end{lemma}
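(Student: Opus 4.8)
The plan is to argue by contradiction, using the minimality of $G$ to apply \lref{l:blocktree} inductively to the two sides of the bridge, and then to glue the resulting IPFs by simply taking their union. First I would record the local structure at the bridge. Since the blocks of $G$ are hamiltonian of order at least $5$ and their vertex sets partition $V(G)$, each $x_i$ lies in a $2$-connected block and hence has degree at least $2$ within it; as $x_i$ is also incident with the bridge and $G$ is subcubic, this forces $\deg_G(x_i)=3$ and so $\deg_{G_i}(x_i)=2$. Consequently each $G_i$ is again a connected $\{2,3\}$-graph whose blocks are precisely the blocks of $G$ lying in $G_i$ (removing a bridge leaves the blocks of order at least $3$ untouched), so each $G_i$ inherits the hypotheses of \lref{l:blocktree} as soon as $|V(G_i)|\ge6$. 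I would also note $n_i:=|V(G_i)|\ge5$, since the block containing $x_i$ has order at least $5$.

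Now suppose, for contradiction, that neither (i) nor (ii) holds. The failure of (i) gives $n_1,n_2\ge6$, and the failure of (ii) supplies an index, say $i=1$ after relabelling, with $n_1\ge7$ and $G_1$ not bad. Both $G_i$ then satisfy the hypotheses of \lref{l:blocktree}, and since each contains at least one block while the blocks of $G$ are exactly those of $G_1$ together with those of $G_2$, each $G_i$ has strictly fewer blocks than $G$. Minimality of $G$ therefore means neither $G_i$ is a counterexample, so the conclusion of \lref{l:blocktree} applies: $G_1$ has a well-behaved IPF $\P_1$ with $\nump{\P_1}\le(n_1-1)/3$ (using $n_1\ge7$ and $G_1$ not bad), and $G_2$ has a well-behaved IPF $\P_2$ with $\nump{\P_2}\le n_2/3$. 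I would then set $\P=\P_1\cup\P_2$. This is an IPF of $G$: every path lies wholly in one $G_i$, and because $x_1x_2$ is the only edge of $G$ joining $G_1$ to $G_2$, such a path is induced in $G$ exactly when it is induced in $G_i$. Counting gives $\nump{\P}\le(n_1-1)/3+n_2/3=(n-1)/3$. As $n=n_1+n_2\ge12>6$ and $(n-1)/3\le n/3$, this IPF beats the bound required of $G$ whether or not $G$ is bad, contradicting the assumption that $G$ is a counterexample — provided $\P$ is well-behaved in $G$.

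Verifying well-behavedness is the step that needs care, and it is where the computation $\deg_G(x_i)=3$ pays off. Writing $S=\{v:\deg_G(v)\le2\}$ and $S_i=\{v:\deg_{G_i}(v)\le2\}$, only $x_i$ changes degree when the bridge is restored, and it moves out of the low-degree set, so $S\cap V(G_i)=S_i\setminus\{x_i\}$. For a path $P$ of $\P_i$ I would check the two cases of the definition: if $V(P)\cap S_i$ lay in a single block of $G_i$, then $V(P)\cap S\subseteq V(P)\cap S_i$ still lies in that block, which is also a block of $G$; and if $P$ contained a bridge subpath $[x,x',y',y]$ with $V(P)\cap S_i=\{x,y\}$, then either $x_i\notin\{x,y\}$, so $V(P)\cap S=\{x,y\}$ and $x'y'$ remains a bridge of $G$, or $x_i\in\{x,y\}$, leaving $V(P)\cap S$ a single vertex, which trivially satisfies case (i). In every case the relevant condition persists in $G$, so $\P$ is well-behaved and the desired contradiction follows.

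The main obstacle is not the arithmetic, which is immediate once the right inductive bounds are in hand, but this degree bookkeeping: one must be certain that restoring the bridge can only remove vertices from the low-degree set and hence only relax the well-behaved constraints. That in turn relies on establishing cleanly that $x_i$ has full degree $3$ in $G$ and that the blocks and bridges of each $G_i$ coincide with those of $G$, so that both the hypotheses of \lref{l:blocktree} and the well-behaved property transfer without loss. Care is also needed to confirm that the roles of (i) and (ii) are exactly the two obstructions to this scheme — (i) guarding against a side of order $5$, which falls outside the order-$\ge6$ hypothesis of \lref{l:blocktree}, and (ii) guaranteeing that at least one side yields the sharper $(n_i-1)/3$ bound rather than only $n_i/3$, which is precisely what is needed to reach $(n-1)/3$ overall.
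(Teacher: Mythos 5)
Your proposal is correct and follows essentially the same route as the paper: assume neither alternative holds, deduce (after relabelling) that $n_1\ge7$ with $G_1$ not bad and $n_2\ge6$, apply \lref{l:blocktree} inductively to each side of the bridge, and take the union of the two well-behaved IPFs to get at most $(n-1)/3$ paths, contradicting that $G$ is a counterexample. The paper states this tersely; your extra bookkeeping on degrees, blocks and the persistence of well-behavedness is a correct elaboration of what the paper leaves implicit.
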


\begin{proof}
Let $n_i=|V(G_i)|$ for $i \in \{1,2\}$, and suppose for a
contradiction that neither (i) nor (ii) holds. Then, without loss of
generality, $n_1 \geq 7$, $G_1$ is not a bad graph, and $n_2 \geq
6$. By induction there is a well-behaved IPF $\P_1$ of $G_1$ with
$\nump{\P_1} \leq (n_1-1)/3$ and a well-behaved IPF $\P_2$ of $G_2$ with
$\nump{\P_2} \leq n_2/3$. Then $\P=\P_1 \cup \P_2$ is a well-behaved IPF of
$G$ with $\nump{\P} \leq (n_1+n_2-1)/3$, contradicting our assumption that
$G$ is a counterexample to \lref{l:blocktree}.
\end{proof}

\begin{lemma}\label{l:bridgeleaf}
Let $G$ be a counterexample to \lref{l:blocktree} with the minimum
number of blocks. Let $x_1x_2$ be a bridge in $G$ and let $G_1$ and
$G_2$ be the components of $G-\{x_1x_2\}$. Then either $G_1$ or $G_2$
is a block of order $5$.
\end{lemma}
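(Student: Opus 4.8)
The plan is to show that for a minimum counterexample the alternative (ii) of \lref{l:bridgeobvious} can never occur without (i), and that (i) already yields the stated conclusion. First I would record that, since every block has order at least $5$ and is hence $2$-connected, each endpoint of the bridge carries two edges inside its block together with the bridge, so $\deg_G(x_1)=\deg_G(x_2)=3$; thus $x_i$ has degree $2$ in $G_i$, and each $G_i$ is again a connected $\{2,3\}$-graph meeting the hypotheses of \lref{l:blocktree} but with strictly fewer blocks, so induction applies to it. I would also note that if $|V(G_i)|=5$ then, as the blocks of order at least $5$ partition $V(G_i)$, the component $G_i$ is a single block of order $5$; hence alternative (i) of \lref{l:bridgeobvious} already gives the conclusion and only alternative (ii) (each $G_i$ of order $6$ or bad, neither of order $5$) needs to be excluded.

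In that remaining situation each $G_i$ has order divisible by $3$ and contains a block of order at least $6$, so $G$ has at least two such blocks and is therefore not bad, while $n=|V(G_1)|+|V(G_2)|\ge 12$. I would derive a contradiction by building a well-behaved IPF $\P$ of $G$ with $\nump{\P}\le(n-1)/3$. For each $i$ I aim to produce a well-behaved IPF $\P_i$ of $G_i$ with $\nump{\P_i}\le|V(G_i)|/3$ containing a path that ends at $x_i$ and is well-behaved except on $\{x_i\}$; joining the two paths ending at $x_1$ and $x_2$ across the bridge then yields an induced path, and since the merge removes one path and $n\equiv0\mod{3}$ we get $\nump{\P}\le n/3-1=(n-3)/3\le(n-1)/3$, contradicting the assumption that $G$ is a counterexample.

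To build $\P_i$ I would distinguish three cases. If $|V(G_i)|=6$ then $G_i$ is a single hamiltonian block and \lref{l:hamorder6} gives a two-path IPF with a path ending at the degree-$2$ vertex $x_i$ (and when both $G_i$ have order $6$, so that $G$ is two order-$6$ blocks joined by the bridge, \lref{l:twoblocks} applies directly). If $G_i$ is bad and $x_i$ lies in one of its order-$5$ blocks $H$, I would delete $V(H)$: the subdivision vertex where $H$ was attached then becomes a degree-$2$ vertex of the hub lying in no triangle, so $G_i-V(H)$ is \emph{not} bad and, by induction, has a well-behaved IPF with at most $(|V(G_i)|-6)/3$ paths; its union with the two-path IPF of $H$ from \lref{l:hamorder5} gives $\nump{\P_i}\le|V(G_i)|/3$ together with a path ending at $x_i$. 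Finally, if $G_i$ is bad and $x_i$ is a triangle apex of the hub, I would apply \lref{l:lift}(i) to augment that triangle, replacing it by a $\Kfme$; the resulting graph has order $|V(G_i)|+1\not\equiv0\mod{3}$, hence is not bad, so induction bounds its IPF by $|V(G_i)|/3$, and \lref{l:lift}(i) lifts this back to $G_i$ with a path ending at $x_i$ and no increase in size.

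The step I expect to be the crux is not the counting but ensuring the merged path is \emph{well-behaved} in $G$. Since $x_1,x_2$ have degree $3$ in $G$, the degree-$2$ vertices of the merged path are split between the two sides, so condition (i) of the definition cannot hold across the bridge and one is forced into condition (ii), which permits only two degree-$2$ vertices, placed symmetrically about $x_1x_2$. As in the proof of \lref{l:twoblocks}, I would handle this by arranging that at least one side contributes a \emph{clean} path—one ending at $x_i$ whose other vertices all have degree $3$ in $G_i$—so that the merged path simply inherits condition (i) or (ii) from the other side. This cleanliness is automatic in the order-$5$-block case (it is exactly what \lref{l:hamorder5} provides) and holds up to the single neighbour of $x_i$ for \lref{l:hamorder6}; the delicate case is when a side is a hub apex, where the bare application of \lref{l:lift}(i) controls the count but not cleanliness. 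There I would instead run a greedy traversal of the (hamiltonian) subdivided triangle ring, in the spirit of \lref{l:hamiltonian23}, that starts a clean path at $x_i$, keeps the total at $|V(G_i)|/3$, and absorbs the order-$5$ blocks by merging them at their subdivision vertices; making all of these requirements hold at once is where the real bookkeeping lies.
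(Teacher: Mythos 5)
Your reduction via \lref{l:bridgeobvious} and \lref{l:twoblocks} to the situation where each $G_i$ is either of order $6$ or bad follows the paper, and your path counts are correct in every branch. The genuine gap is the one you flag yourself at the end: when $x_i$ is a triangle apex of the hub of a bad $G_i$, the IPF obtained by augmenting the triangle, applying induction and lifting back with \lref{l:lift}(i) is only well-behaved \emph{except on} $\{x_i\}$, and the path it produces ending at $x_i$ may carry several degree-$2$ vertices of $G_i$ in uncontrolled positions. When you concatenate that path across the bridge with a path from the other side that also carries degree-$2$ vertices of $G$ (which happens whenever the other side has order $6$ or is itself a hub-apex case --- and both sides being hub apexes is possible), the merged path has degree-$2$ vertices lying in two different blocks, so condition (i) of well-behavedness fails, while condition (ii) would force there to be \emph{exactly two} such vertices sitting immediately on either side of the bridge $x_1x_2$, which your construction does not guarantee. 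Since the contradiction must be with \lref{l:blocktree}, whose conclusion is the existence of a \emph{well-behaved} IPF, this is not cosmetic; the ``greedy traversal'' you gesture at, producing a clean path ending at $x_i$ while keeping the count at $n_i/3$ and keeping every other path well-behaved, is exactly the unproved content.

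The paper sidesteps the problem by never routing degree-$2$ vertices across the bridge in this case. Since $x_i$ is a triangle apex, the hub minus $x_i$ is still hamiltonian, so $G_i-\{x_i\}$ satisfies the hypotheses of \lref{l:blocktree}, is not bad (its order is $\equiv 2\mod{3}$), and by induction and integrality has a well-behaved IPF with at most $(n_i-3)/3$ paths. Taking $[x_1,x_2]$ as a path of its own (or appending $x_1$ to the path ending at $x_2$ when $n_2=6$) adds a path containing no degree-$2$ vertices of $G$, so well-behavedness is automatic and the total $(n-3)/3$ still goes through. Your treatment of the subcase where some $x_i$ lies in an order-$5$ block (delete that block, note the exposed subdivision vertex makes the remainder non-bad, and apply \lref{l:hamorder5}) is a valid alternative to the paper's argument there, which instead exhibits a second bridge violating \lref{l:bridgeobvious}; but it does not rescue the hub-apex case.
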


\begin{proof}
Let $n_i=|V(G_i)|$ for $i \in \{1,2\}$, and suppose for a
contradiction that $n_1,n_2 \geq 6$. Say $x_1 \in G_1$ and $x_2 \in
G_2$. By \lref{l:bridgeobvious}, for $i \in \{1,2\}$, either $n_i = 6$
or $G_i$ is a bad graph and $n_i \geq 9$. \lref{l:twoblocks}
eliminates the possibility that $n_1=n_2=6$. So we may assume without
loss of generality that $n_1 \neq 6$ and hence $G_1$ is a bad graph
and $n_1 \geq 9$.

Suppose that one of $x_1$ or $x_2$ is in a block $C$ of order $5$. If
$n_2=6$, then $G_2$ is a block of order 6 and hence it must be $x_1$
that is in $C$. Also, we may suppose without loss of generality that
it is $x_1$ that is in $C$ if $G_2$ is a bad graph and $n_2 \geq 9$.
Let $yz$ be the bridge of $G$ such that $y$ is in $C$ and $z$ is in
the hub $H_1$ of $G_1$. Let $G'_1$ and $G'_2$ be the components of
$G-\{yz\}$ where $V(H_1) \subseteq V(G'_1)$. Observe that $H_1$ is a
block of $G_1$, $|V(H_1)| \geq 7$, $\deg_{G'_1}(z)=2$ and $z$ is not
in a triangle in $H_1$. Thus, $|V(G'_1)| \geq 7$ and $G'_1$ is not
bad. Clearly $|V(G'_2)| \geq n_2+5 \geq 11$. Thus $yz$ violates
\lref{l:bridgeobvious}.

From the argument above we may assume that $x_1$ is in the hub $H_1$
of $G_1$ and hence $x_1$ is in a triangle in $H_1$. Furthermore, if
$n_2 \neq 6$ then $x_2$ is in a triangle in the hub of $G_2$. Because
$x_1$ is in a triangle in $H_1$, $H_1-\{x_1\}$ is hamiltonian, so by
induction there is a well-behaved IPF $\P_1$ of $G_1-\{x_1\}$ with
$\nump{\P_1} \leq (n_1-3)/3$ (recall $n_1 \equiv 0 \mod{3}$). If $n_2 \neq
6$, there is a well-behaved IPF $\P_2$ of $G_2-\{x_2\}$ with $\nump{\P_2}
\leq (n_2-3)/3$ by a similar argument. If $n_2 = 6$, use
\lref{l:hamorder6} to take an IPF $\P_2$ of $G_2$ with two paths, one
of which ends at $x_2$. In either case, $\P=\P_1 \cup \P_2 \cup
\{x_1x_2\}$ is a well-behaved IPF of $G$. If $n_2 \neq 6$, $\nump{\P} =
\nump{\P_1}+\nump{\P_2}+1 \leq (n_1+n_2-3)/3$. If $n_2 = 6$, $\nump{\P} = \nump{\P_1}+2
\leq (n_1+n_2-3)/3$. This contradicts our assumption that $G$ is a
counterexample to \lref{l:blocktree}.
\end{proof}

\begin{proof}[\textbf{\textup{Proof of \lref{l:blocktree}}}]
Suppose for a contradiction that $G$ is a counterexample to
\lref{l:blocktree} with the minimum number of blocks, and let
$n=|V(G)|$.
\lref{l:hamiltonian23} establishes that \lref{l:blocktree} holds when
$G$ is hamiltonian, so $G$ has at
least two blocks. Thus, if $n \leq 12$, $G$ must have exactly two blocks
and \lref{l:twoblocks} establishes that \lref{l:blocktree} holds. So we may
further assume that $n \geq 13$.

It follows from \lref{l:bridgeleaf} and the hypotheses of
\lref{l:blocktree} that $G$ consists of a number $t \geq 1$ of
hamiltonian blocks $L_1,\ldots,L_t$ of order $5$, one other
hamiltonian block $C$ of order at least $5$, and bridges
$x_1y_1,\ldots,x_ty_t$ where $x_1,\ldots,x_t \in V(C)$ and $y_i \in
V(L_i)$ for $i \in \{1,\ldots,t\}$. The proof splits into four cases
according to the placement of the vertices $x_1,\ldots,x_t$ in $C$. In
each case we will construct an IPF $\P$ of $G$ that contradicts our
assumption that $G$ is a counterexample to \lref{l:blocktree}.

\textbf{Case 1.} Suppose that there are $i,j \in \{1,\ldots,t\}$ such
that $x_ix_j \in E(C)$. Then $n \geq 15$. Without loss of generality,
$i=1$ and $j=2$. Let $G_0=G-(V(L_1) \cup V(L_2))$. Let
$G'_0$ be the $\{2,3\}$-graph of order $n-8 \geq 7$ obtained from
$G_0$ by pasting a $\Kfme$ over $x_1x_2$, and note that the
block $C'_0$ of $G'_0$ with $x_1,x_2 \in V(C'_0)$ is hamiltonian. So,
by induction, there is a well-behaved IPF $\P'_0$ of $G'_0$ with
$\nump{\P'_0} \leq (n-8)/3$. By applying
\lref{l:lift}(ii) to $\P'_0$ we
obtain an IPF $\P_0$ of $G_0$ with $\nump{\P_0} \leq (n-8)/3$ that has
paths ending at $x_1$ and $x_2$ and is well-behaved except on
$\{x_1,x_2\}$.   Use \lref{l:hamorder5} to take IPFs
$\P_1$ and $\P_2$ of $L_1$ and $L_2$, each with two paths, where one
path of $\P_1$ ends at $y_1$ and one path of $\P_2$ ends at
$y_2$. Then $\P = \P_0 \cup \P_1 \cup \P_2 \cup \{x_1y_1,x_2y_2\}$ is
a well-behaved IPF of $G$ with $\nump{\P}=\nump{\P_0}+2 \leq (n-2)/3$.

\textbf{Case 2.} Suppose that we are not in Case $1$ and that
$|V(C)|=5$. Then $t=2$, because $n \geq 13$ implies $t \geq 2$ and we
would necessarily be in Case~$1$ if $t\geq 3$. So $n=15$. Without loss
of generality, let $(x_1,u,x_2,v,w)$ be a hamilton cycle in $C$. Use
\lref{l:hamorder5} to take IPFs $\P_1$ and $\P_2$ of $L_1$ and $L_2$,
each with two paths, where one path of $\P_1$ ends at $y_1$ and one
path of $\P_2$ ends at $y_2$. Then $\P = \P_1 \cup \P_2 \cup
\{x_1y_1,x_2y_2,ux_1,ux_2,vw\}$ is a well-behaved IPF of $G$ with
$\nump{\P}=4$.

\textbf{Case 3.} Suppose that we are not in Case $1$ or $2$ and that
$x_1$ is in a triangle of $C$. Because we are not in Case $1$ or $2$,
$|V(C)| \geq 6$. Let $G_0=G-(V(L_1) \cup \{x_1\})$, and note
$|V(G_0)|=n-6 \geq 7$. Note that the block $C_0$ of $G_0$ with vertex
set $V(C)\setminus \{x_1\}$ has $|V(C_0)|\geq 5$ and is hamiltonian
because $x_1$ is in a triangle of $C$. Also, $G_0$ is not bad because
$C_0$ contains two degree $2$ vertices that are not in triangles. So
by induction there is a well-behaved IPF $\P_0$ of $G_0$ with $\nump{\P_0}
\leq (n-7)/3$ paths. Use \lref{l:hamorder5} to take an IPF $\P_1$ of
$L_1$ with two paths such that one path ends at $y_1$. Then $\P=\P_0
\cup \P_1 \cup \{x_1y_1\}$ is a well-behaved IPF of $G$ and $\nump{\P} =
\nump{\P_0}+2 \leq (n-1)/3$.

\textbf{Case 4.} Suppose that we are not in Case $1$, $2$ or $3$. Then
$|V(C)| \geq 6$ and $x_1$ is not in a triangle in $C$. Let
$G_0=G-V(L_1)$, and let $G'_0$ be the graph obtained from $G_0$ by
suppressing vertex $x_1$. Note that $|V(G'_0)|=n-6 \geq 7$ and that
the block $C'_0$ of $G'_0$ with vertex set $V(C)\setminus \{x_1\}$ has
$|V(C'_0)|\geq 5$ and is hamiltonian (since hamiltonicity is preserved
by suppressing a vertex of degree 2). So, by induction, $G'_0$ has a
well-behaved IPF $\P'_0$ with $\nump{\P'_0} \leq (n-6-\delta)/3$
paths, where $\delta=0$ if $G'_0$ is bad and $\delta=1$ otherwise. By
applying \lref{l:lift}(iii) to $\P'_0$ we obtain an IPF $\P_0$ of
$G_0$ with $\nump{\P_0}\leq\nump{\P'_0}+1 \leq (n-3-\delta)/3$ that has a
path ending at $x_1$ and is well-behaved except on $\{x_1\}$. Use
\lref{l:hamorder5} to take an IPF $\P_1$ of $L_1$ with two paths such
that one path ends at $y_1$. Then $\P=\P_0 \cup \P_1\cup\{x_1y_1\}$ is
an IPF of $G$ with $\nump{\P}=\nump{\P_0}+1 \leq (n-\delta)/3$.  If
$G$ is bad or $G'_0$ is not bad then we are done. So we may assume
that $G'_0$ is bad and $G$ is not bad.

As $G'_0$ is bad, it must have a hub and that can only be $C'_0$,
since every other block of $G'_0$ has order 5. So $C'_0$ is obtained
from a triangle ring by subdividing some set of edges not in
triangles. Note that $G$ is obtained from $G'_0$ by subdividing some
edge $uv$ with the vertex $x_1$ and adding $L_1$ and the bridge
$x_1y_1$. So $uv$ is in a triangle in $C'_0$, since otherwise $G$ is
bad or we are in the situation handled by Case 1. Each triangle in
$C'_0$ has two edges in the unique hamilton cycle in $C'_0$ and one
edge not in it. We consider two cases according to which kind of edge
$uv$ is.

If $uv$ is not in the hamilton cycle in $C'_0$, then $C-\{x_1\}$ has
order at least 6 and is hamiltonian. Also, $G-(V(L_1)\cup\{x_1\})$ has
$n-6\ge7$ vertices and is not bad, so by induction it has a
well-behaved IPF $\P_2$ with $\nump{\P_2}\le(n-7)/3$.  Now
$\P_2\cup\P_1\cup\{x_1y_1\}$ is a well-behaved IPF with at most
$2+(n-7)/3=(n-1)/3$ paths, as required.

If $uv$ is in the hamilton cycle in $C'_0$, we can suppose without
loss of generality that $\deg_{G'_0}(u)=3$ and
$\deg_{G'_0}(v)=2$. Then $C-\{x_1,v\}$ has order at least 5 and is
hamiltonian. Also, $G-(V(L_1)\cup\{x_1,v\})$ has $n-7\ge6$ vertices,
so by induction it has a well-behaved IPF $\P_2$ with
$\nump{\P_2}\le(n-7)/3$. Now $\P_2\cup\P_1\cup\{vx_1,x_1y_1\}$ is a
well-behaved IPF with at most $2+(n-7)/3=(n-1)/3$ paths, as required.
\end{proof}

\begin{proof}[\textbf{\textup{Proof of \tref{t:23graphwfactor}}}]
If $G$ satisfies the hypotheses of \lref{l:blocktree}, then we can
apply it to complete the proof, so assume otherwise. Of all the
$2$-factors of $G$ whose cycles each have length at least $5$, let $F$
be one with the minimum number of cycles. Our first goal will be to
obtain a graph $G^*$ from $G$ by deleting edges between cycles of $F$
such that $G^*$ satisfies the hypotheses of \lref{l:blocktree} and is
not a bad graph.

Let $S$ be the set of edges of $G$ that are incident with vertices in
two distinct cycles of $F$ and let $S'$ be a maximal subset of $S$
such that $G-S'$ is connected. For each cycle $A$ of $F$ the graph
$G-S'$ has a hamiltonian block with vertex set $V(A)$. Note that $S'$
is nonempty because $G$ does not satisfy the hypotheses of
\lref{l:blocktree}. If $G-S'$ is not a bad graph, let $S^*=S'$ and
$G^*=G-S^*$. Otherwise $G-S'$ is bad and we proceed as follows. Choose
an arbitrary edge $uv \in S'$ and note that without loss of generality
$u$ is in a block $L$ of order $5$ in $G-S'$ and either $v$ is in a
different block of order $5$ in $G-S'$ or $v$ is in a triangle in the
hub of $G-S'$. Let $S^*=(S' \setminus \{uv\}) \cup \{wx\}$ where $wx$
is the unique bridge in $G-S'$ with $w$ in the hub of $G-S'$ and $x
\in V(L)$. Let $G^*=G-S^*$ and note that $G^*$ is not a bad graph
because, in $G^*$, $w$ is a vertex of degree $2$ that is in a block of
order at least $6$ but not in a triangle.

By \lref{l:blocktree}, there is a well-behaved IPF $\P$ of $G^*$ with
at most $(n-1)/3$ paths. We will show that $\P$ is also an IPF of $G$
and so complete the proof. Suppose otherwise that there is an edge $yz
\in S^*$ such that $y$ and $z$ are both vertices in the same path of
$\P$. Note that, in $G^*$, $y$ and $z$ are vertices of degree $2$ and
are in different blocks. Hence, since $\P$ is well-behaved in $G^*$,
it must be that $G^*$ contains a bridge $y'z'$ such that $yy',zz' \in
E(G^*)$. Since $y$ and $z$ are vertices of degree $2$ in $G^*$, $yy'
\in E(F_1)$ and $zz' \in E(F_2)$ for different cycles $F_1$ and $F_2$
of $F$. However, then the $2$-factor obtained from $F$ by replacing
$F_1$ and $F_2$ with a single cycle with edge set $(E(F_1) \cup E(F_2)
\cup \{yz,y'z'\}) \setminus \{yy',zz'\}$ contradicts our choice of $F$.
\end{proof}

\section{Induced path factors of cubic graphs}\label{s:cubic}

In the previous section we saw that \tref{t:main} holds for any cubic
graph containing a $2$-factor whose cycles all have length at least
$5$. Jackson and Yoshimoto \cite{JY09} showed that any $3$-connected
cubic graph on at least $6$ vertices has such a $2$-factor. In this
section we establish \tref{t:main} via contradiction by showing that a
minimal counterexample to it must be $3$-connected. Recall that for
any subcubic graph the connectivity and edge-connectivity are equal.

\begin{lemma}\label{l:bridgeless}
A counterexample to \tref{t:main} of minimum order is $2$-connected.
\end{lemma}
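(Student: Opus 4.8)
The plan is to prove the contrapositive-style statement by contradiction: assume $G$ is a counterexample to \tref{t:main} of minimum order that is *not* $2$-connected, and derive a smaller counterexample or a direct contradiction. Since $G$ is cubic and subcubic connectivity equals edge-connectivity, a graph that is connected but not $2$-connected either has a cut vertex or a bridge. For a cubic graph these are intimately related: if $v$ is a cut vertex of a cubic graph, then one of the three edges at $v$ must be a bridge (because $v$ has odd degree, it cannot split its three incident edges evenly between two sides of the cut). So the first step is to reduce to the case where $G$ contains a bridge $e=x_1x_2$.

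Next I would cut along the bridge. Write $G-\{x_1x_2\}$ as the disjoint union of two connected subgraphs $G_1 \ni x_1$ and $G_2 \ni x_2$; each $G_i$ is subcubic with exactly one vertex (namely $x_i$) of degree $2$ and all others of degree $3$. The natural move is to repair each $G_i$ back into a cubic graph so that induction applies. The cleanest repair is to take two copies of $G_1$ (and likewise $G_2$) and join them at their degree-$2$ vertices, or alternatively to add a pendant structure; but a more economical approach is to attach a small cubic gadget at $x_i$ to restore $3$-regularity while controlling the induced path number. Concretely, one can form a cubic graph $G_i^+$ by adding a copy of a small cubic graph (e.g. $K_4$ with one edge subdivided, i.e.\ joining $x_i$ to a $\Kfme$) so that $G_i^+$ is cubic, connected, and has controlled order. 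Then apply \tref{t:main} (valid by minimality, since $G_i^+$ is smaller than $G$ only if the gadget is small enough — this is the delicate bookkeeping) to get an IPF of each $G_i^+$, restrict it to $G_i$ to obtain an IPF in which a path ends at $x_i$, and finally glue the two IPFs together across the bridge using the edge $x_1x_2$, appealing to the arithmetic $\nump{\P}=\nump{\P_1}+\nump{\P_2}-1$ that mirrors \lref{l:glue}(ii).

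\textbf{The main obstacle} will be controlling the vertex counts so that the additive bound $\rho(G_1)+\rho(G_2)-1 \le (n-1)/3$ comes out correctly, and simultaneously ensuring the glued path remains \emph{induced} across the bridge. The bound is tight — we are aiming for $(n-1)/3$ — so there is essentially no slack: if $G_i^+$ has $n_i$ vertices and we can only guarantee $\rho(G_i^+)\le(n_i-1)/3$, we must verify that the gadget overhead plus the $-1$ from gluing does not push the total over budget. This likely forces a careful case analysis on the sizes $|V(G_1)|$ and $|V(G_2)|$, with small cases (where one side is too small for the inductive hypothesis, e.g.\ $|V(G_i)|\le 6$) handled directly. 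One must also confirm that ending a path at $x_i$ in each side is genuinely achievable and that the edge $x_1x_2$ extends these two terminal paths into a single induced path — which it does precisely because $x_1$ and $x_2$ have no common neighbour (they lie on opposite sides of a bridge), so no chord can spoil inducedness.

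A cleaner alternative, which I would pursue first, avoids gadgets entirely: suppress or locally modify each side to land inside the scope of the $\{2,3\}$-graph machinery of \sref{s:23gr}, then invoke the well-behaved IPFs of \lref{l:blocktree} or \tref{t:23graphwfactor} directly. Since the bridge endpoints have degree $2$ after the cut, the two sides are exactly the kind of $\{2,3\}$-graphs those lemmas are built to handle, and the well-behaved property is precisely what guarantees the glued path stays induced. If the inductive hypothesis on \tref{t:main} can be fed through a suitable $2$-factor on each augmented side, the bound $(n-1)/3$ should follow by the same additive accounting, with the bad-graph bookkeeping (the $n/3$ versus $(n-1)/3$ distinction) absorbed into the case split. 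The crux, either way, remains squeezing the tight constant through the $-1$ loss at the bridge.
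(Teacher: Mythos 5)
Your overall architecture matches the paper's: cut at the bridge $x_1x_2$, obtain for each component $G_i$ an IPF with a path ending at $x_i$, and glue across the bridge with the $-1$ accounting (inducedness across the bridge is indeed automatic). But the step you yourself flag as ``the delicate bookkeeping'' is exactly where the proof lives, and your concrete proposal for it does not work. Attaching a subdivided $K_4$ at $x_i$ adds $5$ vertices, so $G_i^+$ has order $n_i+5$; when the other side satisfies $n_{3-i}=5$ (which happens, e.g., when the bridge separates off a single block of order $5$), $G_i^+$ has order $n_i+n_{3-i}=n$ and minimality of the counterexample gives you nothing. Moreover, even when the order does drop, restricting an IPF of $G_i^+$ to $G_i$ does not by itself guarantee a path \emph{ending} at $x_i$: if no path of the IPF crosses the edge joining $x_i$ to the gadget, $x_i$ may be interior to a path of $G_i$, and breaking that path costs the one unit of slack you do not have. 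Your fallback of invoking the $\{2,3\}$-graph machinery directly also fails as stated: Theorem~\ref{t:23graphwfactor} needs a $2$-factor with all cycles of length at least $5$, and Lemma~\ref{l:blocktree} needs every block to be hamiltonian with the blocks partitioning the vertex set; a component $G_i$ of $G-\{x_1x_2\}$ need satisfy neither.

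What the paper does instead is repair each side by \emph{local surgery} rather than by a gadget: since $x_i$ has degree $2$ in $G_i$, either suppress $x_i$ (if it is not in a triangle) or augment the triangle containing $x_i$, producing a cubic graph $G_i'$ of order $n_i-1$ or $n_i+1$, which is at most $n_1+n_2-4<n$, so minimality applies cleanly. Lemma~\ref{l:lift}(i) or (iii) then converts the IPF of $G_i'$ back into an IPF of $G_i$ with at most $(n_i+1)/3$ paths \emph{and} with a path guaranteed to end at $x_i$ --- this is the mechanism you are missing for producing the terminal path within budget. The small cases $n_i\in\{5,7\}$ are handled separately by observing that $G_i$ is then hamiltonian and applying Lemma~\ref{l:hamorder5} or~\ref{l:hamorder7}. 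Without identifying some such order-reducing repair together with a lemma that certifies a path ending at the cut vertex, your plan does not close.
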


\begin{proof}
Aiming for a contradiction, suppose that $G$ is a counterexample to
\tref{t:main} of minimum order and that $x_1x_2$ is a bridge in
$G$. For $i \in \{1,2\}$, let $G_i$ be the component of $G-\{x_1x_2\}$
containing $x_i$ and let $n_i=|V(G_i)|$. Then $|V(G)|=n_1+n_2$ and,
because $G$ is cubic, $n_i$ is odd and at least $5$ for $i\in\{1,2\}$.

Let $i \in \{1,2\}$. We claim that $G_i$ has an IPF $\P_i$ such that
$\nump{\P_i} \leq (n_i+1)/3$ and one path ends at $x_i$. If $n_i \in\{5,7\}$,
then it is not hard to see that $G_i$ is hamiltonian (note
that $G_i$ can be obtained from a cubic graph of order $n_i-1$ by
subdividing an edge) and so our claim follows by \lref{l:hamorder5} or
\lref{l:hamorder7}. So we may assume that $n_i \geq 9$. Let $G'_i$ be
the cubic graph obtained from $G_i$ by suppressing $x_i$ if it is not
in a triangle in $G_i$ and augmenting the triangle of $G_i$ containing
$x_i$ otherwise. Let $t=1$ if $x_i$ is in a triangle in $G_i$ and let
$t=0$ otherwise. Then $|V(G_i')|=n_i-1+2t$ and hence $8 \leq |V(G'_i)|
\leq n_1+n_2-4$. So, by induction, there is an IPF $\P'_i$ of $G'_i$
with $\nump{\P'_i} \leq (n_i-2+2t)/3$. Thus our claim holds by applying
\lref{l:lift}(i) to $\P'_i$ if $t=1$ and
\lref{l:lift}(iii) to $\P'_i$ if $t=0$.

Then $\P=\P_1 \cup \P_2 \cup \{x_1x_2\}$ is an IPF of $G$ and
$\nump{\P}=\nump{\P_1}+\nump{\P_2}-1 \leq (n_1+n_2-1)/3$. This contradicts our
assumption that $G$ is a counterexample to \tref{t:main}.
\end{proof}

Next we dispose of a particular configuration that would otherwise cause
us problems later.

\begin{lemma}\label{l:k4e}
A counterexample to \tref{t:main} of minimum order does not contain a
copy $G_1$ of $\Kfme$ such that the two vertices of degree $2$ in
$G-V(G_1)$ are nonadjacent in $G$.
\end{lemma}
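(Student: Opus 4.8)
Let $G$ be a counterexample of minimum order and suppose it contains such a copy $G_1$ of $\Kfme$; write its vertices as $a,b,c,d$ with $ab\notin E(G)$, so that $c,d$ have all their neighbours inside $G_1$ while $a,b$ have single outside neighbours $a',b'$. By hypothesis $a'\ne b'$ and $a'b'\notin E(G)$, and $a',b'$ are exactly the two degree-$2$ vertices of $R:=G-V(G_1)$. Since $\{aa',bb'\}$ is the only edge cut separating $G_1$ from $R$, the $2$-connectedness of $G$ (\lref{l:bridgeless}) forces $R$ to be connected. The plan is to turn $R$ into a strictly smaller cubic graph, invoke \tref{t:main} inductively, and then reassemble an IPF of $G$ with at most $(n-1)/3$ paths, contradicting the choice of $G$.

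Assuming first that neither $a'$ nor $b'$ lies in a triangle of $R$, I would suppress both of them (\lref{l:lift}(iii)), producing a connected cubic graph $H$ on $n-6$ vertices; let $e_{a'},e_{b'}$ be the edges created by these suppressions. By minimality, \tref{t:main} applies to $H$, so $H$ has an IPF $\P''$ with $\nump{\P''}\le (n-7)/3$ (the orders $n-6\le 6$ are checked directly). Reinserting $a'$ and then $b'$ via \lref{l:lift}(iii) yields an IPF of $R$ in which one path terminates at $a'$ and another at $b'$, at a cost of exactly $+1$ per reinsertion; thus this IPF of $R$ uses at most $(n-7)/3+2=(n-1)/3$ paths.

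The essential point is to arrange that these two terminating paths are \emph{distinct}. Each reinsertion splits the path of $\P''$ carrying $e_{a'}$ (resp.\ $e_{b'}$) and lets $a'$ (resp.\ $b'$) cap either resulting piece. If $e_{a'},e_{b'}$ lie on different paths of $\P''$ the two caps are automatically on different paths; if they lie on a common path, the side of each split can be chosen so that $a'$ and $b'$ cap different pieces. Granting distinct capped paths $P_{a'},P_{b'}$, I would reattach $G_1$ by the induced detour $[a',a,c,b,b']$, merging $P_{a'}$ with $P_{b'}$ and leaving $[d]$ as a trivial path. This detour is induced in $G$ precisely because $a'b'\notin E(G)$ (the hypothesis) and because $a,b,c,d$ have no neighbours outside $V(G_1)\cup\{a',b'\}$. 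Merging two paths ($-1$) while adding the trivial path $[d]$ ($+1$) leaves the count unchanged, so $G$ has an IPF with at most $(n-1)/3$ paths, the desired contradiction.

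The step I expect to be the main obstacle is guaranteeing that $a'$ and $b'$ end on different paths: the orphan-side bookkeeping above must be made uniform, and the degenerate situation in which $a'$ and $b'$ share a neighbour (so the two suppression edges overlap) needs separate care, since there the merged detour could create a chord at $a'$ or $b'$. The remaining wrinkles are routine. If $a'$ (or $b'$) lies in a triangle its suppression is illegal, but then I would instead augment that triangle using \lref{l:lift}(i); this also produces a terminating path at the relevant vertex and is in fact cheaper ($+0$ instead of $+1$), so the count goes through a fortiori, with the reduced graph having $n-4$ or $n-2$ vertices accordingly. Finally the finitely many small orders $n\le 12$, where the inductive bound on $H$ is unavailable, are settled by direct inspection.
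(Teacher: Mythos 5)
Your overall skeleton---delete $V(G_1)$, repair the two degree-$2$ vertices of $R$ by suppression or triangle augmentation, apply the minimality of $G$ to the resulting cubic graph, and lift the IPF back with \lref{l:lift}---is the paper's, and your counting is correct. The genuine gap is in the reassembly. You merge the two paths $P_{a'}$ and $P_{b'}$ of your IPF of $R$ into a single path via the detour $[a',a,c,b,b']$, and you verify only that the detour itself creates no chords. But the merged object is the whole of $P_{a'}$, followed by $[a,c,b]$, followed by $P_{b'}$, and for this to be an \emph{induced} path you also need that no edge of $G$ joins a vertex of $P_{a'}$ to a vertex of $P_{b'}$. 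Nothing in the construction guarantees this: $P_{a'}$ and $P_{b'}$ are two arbitrary induced paths in the connected graph $R$, and a single edge of $R$ between their interiors destroys inducedness of the concatenation. The issue you flag as the ``main obstacle'' (getting $a'$ and $b'$ onto distinct paths) is actually the easy part---the paper dispatches it by swapping one edge at one of the attachment vertices---whereas the chord problem between the two paths is fatal to the merging step as written.

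The repair is to not merge at all, and it is what the paper does: cover $G_1$ by the two induced paths $[a,c]$ and $[b,d]$ and append them to $P_{a'}$ and $P_{b'}$ \emph{separately} via the edges $a'a$ and $b'b$. Each extension stays induced because $c$ and $d$ have no neighbours outside $V(G_1)$, the two paths of $R$ are never concatenated with each other, and the count is unchanged: $+2$ paths to cover $G_1$ and $-2$ for the two junctions, giving at most $(n-1)/3$ paths exactly as in your tally. (A smaller point: your ``direct inspection for small $n$'' can be avoided entirely---by \lref{l:hamiltonian23} and \lref{l:bridgeless} a minimal counterexample is bridgeless and nonhamiltonian, and the only such cubic graphs on at most $12$ vertices are the Petersen and Tietze graphs, neither of which contains a $\Kfme$; hence $n\ge14$ and the inductive bound is always available.)
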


\begin{proof}
Aiming for a contradiction, suppose that $G$ is a counterexample to
\tref{t:main} of minimum order that contains a copy $G_1$ of $\Kfme$
such that the two vertices of degree $2$ in $G-V(G_1)$ are nonadjacent
in $G$. Let $n=|V(G)|$. By Lemmas~\ref{l:hamiltonian23} and
\ref{l:bridgeless}, $G$ is nonhamiltonian and bridgeless (note that a
cubic graph cannot be bad). So $n \geq 14$, since the only bridgeless
nonhamiltonian cubic graphs with $12$ or fewer vertices are the
Petersen graph and the Tietze graph (see \fref{f:Tietze}) and neither
of these contains a copy of $\Kfme$.

\begin{figure}
\begin{centering}
\includegraphics[scale=0.6]{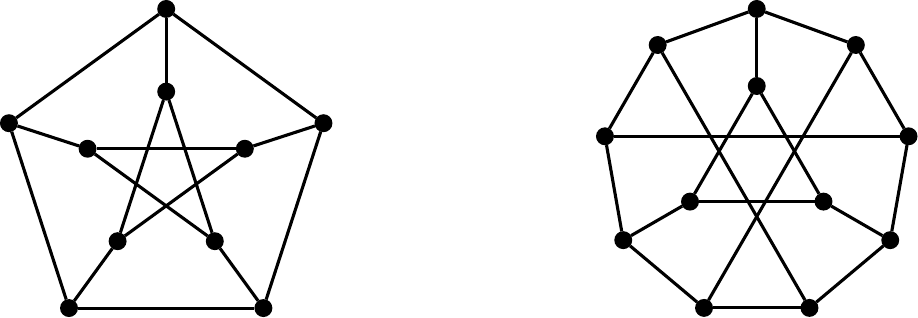}
\caption{\label{f:Tietze}The Petersen graph and the Tietze graph}
\end{centering}
\end{figure}

Let $G_0=G-V(G_1)$. Let $x_0x_1$ and $y_0y_1$ be the two edges of $G$
such that $x_0,y_0 \in V(G_0)$ and $x_1,y_1 \in V(G_1)$, and note that
$x_0y_0 \notin E(G)$ by assumption.  Let $G'_0$ be the graph obtained
from $G_0$ by suppressing $x_0$ if it is not in a triangle in $G_0$
and augmenting the triangle of $G_0$ containing $x_0$ otherwise. In
turn, let $G''_0$ be the cubic graph obtained from $G'_0$ by
suppressing $y_0$ if it is not in a triangle in $G'_0$ and augmenting
the triangle of $G'_0$ containing $y_0$ otherwise.
Let $t'$ (respectively $t''$) be $1$ if $x_0$ (respectively $y_0$)
is in a triangle in $G_0$ and $0$ otherwise.
Let $t=t'+t''$ and note that $|V(G''_0)|=n-6+2t$ and hence
$8 \leq |V(G''_0)| \leq n-2$. So, by induction, there is an IPF
$\P''_0$ of $G''_0$ with $\nump{\P''_0} \leq (n-7+2t)/3$. By applying
\lref{l:lift} to $\P''_0$ (part (i) if $t''=1$ and part (iii) if
$t''=0$) with $c$ chosen to be $y_0$, we can obtain an IPF $\P'_0$ of
$G'_0$ such that a path, $P'$ say, of $\P'_0$ ends at $y_0$ and
$\nump{\P'_0} \leq \nump{\P''_0}+1-t''$. Let $u$ and $v$ be the
neighbours of $x_0$ in $G'_0$ where, without loss of generality,
either $u$ is not in $P'$ or both $u$ and $v$ are in $P'$ and the
subpath of $P'$ from $y_0$ to $v$ does not include $u$. Next we apply
\lref{l:lift} to $\P'_0$ (part (i) if $t'=1$ and part (iii) if $t'=0$)
with $c$ chosen to be $x_0$ and $a$ chosen to be $u$. This produces an
IPF $\P_0$ of $G_0$ such that a path of $\P_0$ ends at $x_0$, $\P_0
\subseteq (\P'_0 \setminus \{x_0v\}) \cup \{x_0u\}$ (note that $x_0v
\notin E(G'_0)$ if $t'=0$), and
\[
\nump{\P_0} \leq \nump{\P'_0}+1-t' \leq \nump{\P''_0}+2-t
\leq (n-1-t)/3 \leq (n-1)/3.
\]
Furthermore, the fact that
$\P_0 \subseteq (\P'_0 \setminus \{x_0v\}) \cup \{x_0u\}$
implies there is a path $P$ of $\P_0$ such that $P$ ends at $y_0$, $E(P) \subseteq E(P') \cup \{x_0u\}$
and $P$ does not contain the edge $x_0v$.
Hence $\{x_0,v\}\nsubseteq V(P)$ and, given our choice of $u$ and $v$,
we have $x_0\notin V(P)$. So distinct paths of $\P_0$ end at $x_0$ and $y_0$.

Let $\P_1$ be an IPF of $G_1$ with two paths such that one ends at
$x_1$ and the other ends at $y_1$. Then
$\P=\P_0 \cup \P_1 \cup \{x_0x_1,y_0y_1\}$ is an IPF of $G$ with
$\nump{\P}=\nump{\P_0} \leq (n-1)/3$, contradicting our assumption
that $G$ is a counterexample to \tref{t:main}.
\end{proof}

We are now ready to prove the connectivity result we want.

\begin{lemma}\label{l:3connected}
A counterexample to \tref{t:main} of minimum order is $3$-connected.
\end{lemma}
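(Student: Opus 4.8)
The plan is to prove Lemma~\ref{l:3connected} by contradiction, building on the previous two lemmas. By \lref{l:bridgeless}, a minimum counterexample $G$ to \tref{t:main} is already $2$-connected (equivalently, $2$-edge-connected, since for subcubic graphs connectivity equals edge-connectivity). So it suffices to rule out the existence of a $2$-cut, which for a cubic graph amounts to ruling out a $2$-edge-cut. Suppose for contradiction that $\{e_1,e_2\}$ is such an edge cut, separating $G$ into two sides. Let $G_1$ and $G_2$ be the two components of $G-\{e_1,e_2\}$, with $e_1=x_1x_1'$ and $e_2=y_1y_1'$ where $x_1,y_1\in V(G_1)$ and $x_1',y_1'\in V(G_2)$ (the cases where $e_1,e_2$ share an endpoint are degenerate and should be handled first, or will fall under \lref{l:k4e}).

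The first thing I would check is small-order and degenerate configurations. Since $G$ is bridgeless and nonhamiltonian (by \lref{l:bridgeless} and \lref{l:hamiltonian23}), we have $n\geq 14$ as in the previous proof. If a side is too small it will be forced to be a $K_4^-$ or similar, and \lref{l:k4e} is precisely the tool that dispatches the troublesome case where a $\Kfme$ sits across the cut with nonadjacent degree-$2$ vertices on the other side. So I would first argue that each of $G_1$ and $G_2$ has at least (say) $5$ or $6$ vertices, and peel off the cases where one side is a copy of $\Kfme$ by invoking \lref{l:k4e}.

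For the main case, the strategy mirrors \lref{l:bridgeless}: on each side, close up the two cut-edges into a smaller cubic graph and apply induction. Concretely, on side $i$ the two half-edges leave $G_i$ at the degree-$2$ vertices among $\{x_i,y_i\}$ (after deletion these have degree $2$). I would form a cubic graph $G_i'$ by either suppressing those degree-$2$ vertices, augmenting triangles that contain them, or pasting a $\Kfme$ over the pair --- exactly the three surgeries packaged in \lref{l:lift}. The delicate part is that there are \emph{two} degree-$2$ vertices per side rather than one, so I must track all cases: whether $x_i$ and $y_i$ are adjacent, whether they lie in triangles, and whether suppressing/pasting is the right move. The $\Kfme$-pasting operation (\lref{l:lift}(ii)) handles the case where $x_i$ and $y_i$ are the two degree-$2$ endpoints of a deleted edge and are nonadjacent, giving paths ending at both $x_i$ and $y_i$. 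Counting as before, each $G_i'$ has order between $8$ and $n-2$, so induction applies, yielding an IPF $\P_i'$ of $G_i'$; lifting through \lref{l:lift} gives $\P_i$ of $G_i$ with paths ending at the relevant cut-vertices and the right size bound.

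The endgame is to reassemble: take $\P=\P_1\cup\P_2\cup\{e_1,e_2\}$ and verify it is an induced path factor of $G$ whose size beats $(n-1)/3$, contradicting minimality. The main obstacle I expect is twofold. First, the bookkeeping: with two cut-edges and two endpoints per side, the number of subcases (triangle/no triangle, adjacent/nonadjacent, paths ending distinctly versus coinciding) is significantly larger than in \lref{l:bridgeless}, and I must ensure in each that I can arrange \emph{distinct} paths ending at the two cut-vertices on each side so that adding $e_1$ and $e_2$ keeps everything induced and does not merge paths into cycles. Second, and more subtly, I must guarantee that adding back both $e_1$ and $e_2$ does not create a chord that destroys inducedness --- this is exactly why the lifted IPFs are required to have paths \emph{ending} at the cut-vertices (so the new edges only extend path-ends), and it is why \lref{l:k4e} was proved separately: it removes the one configuration where this reassembly provably cannot be made to work. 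I would lean on the fact that each $\P_i$ has paths terminating at $x_i$ and $y_i$, redirect one edge at a merged endpoint (as in the parenthetical trick ``replacing $y_0u$ by $y_0v$'' from the proof of \lref{l:k4e}) when the two paths coincide, and thereby force the reassembled $\P$ to be a genuine IPF of the required size.
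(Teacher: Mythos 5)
Your high-level plan (contradiction via a $2$-edge-cut, per-side induction, lifting, reassembly, with \lref{l:k4e} disposing of one bad configuration) matches the paper's in outline, but two essential ideas are missing and the specific surgeries you propose do not give the required count. First, the paper does not treat the cut as simply ``two edges between $G_1$ and $G_2$'': it normalises the decomposition so that the part between $G_1$ and $G_2$ is a ladder $H$ of length $s$ (two paths $[x_1=u_0,\dots,u_s=x_2]$, $[y_1=v_0,\dots,v_s=y_2]$ plus some rungs), with $x_iy_i\notin E(G_i)$. This normalisation is what makes the key per-side move possible: one forms the \emph{cubic} graph $G_i+\{x_iy_i\}$ on $n_i<n$ vertices, applies induction, and deletes $x_iy_i$ from the resulting IPF. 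That yields the crucial dichotomy --- either $\nump{\P_i}\le(n_i-1)/3$, or $\nump{\P_i}\le(n_i+2)/3$ with two \emph{distinct} paths ending at $x_i$ and $y_i$ --- and the $2s-2$ ladder vertices supply the slack needed later. Your surgeries do not achieve this: suppressing both $x_i$ and $y_i$ costs $+1$ path each (\lref{l:lift}(iii)), giving $\nump{\P_i}\le(n_i+3)/3$ with no guarantee the two path-ends lie on distinct paths, and summing over both sides gives $n/3$, which misses $(n-1)/3$; pasting a $\Kfme$ ``over the pair'' is not even applicable, since \lref{l:lift}(ii) requires an existing edge between two degree-$2$ vertices.

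Second, you do not address the genuinely hard case, which is the \emph{mixed} one: say $\nump{\P_1}\le(n_1-1)/3$ but with no path of $\P_1$ ending at $x_1$ or $y_1$, while $\P_2$ only satisfies the weaker bound $(n_2+2)/3$. Then $\P_1\cup\P_2$ has up to $(n+1)/3$ paths, and you cannot add a cut edge because no path of $\P_1$ ends at a cut vertex. The paper resolves this by rebuilding side~$1$ from scratch: it extends $G_1$ by $u_1,v_1$ and pastes a $\Kfme$ over $u_1v_1$ to form a cubic $G''_1$ of order $n_1+4$, applies induction there, and lifts via \lref{l:lift}(ii) to force distinct path-ends at $u_1,v_1$ at the cost of a bound of $(n_1+3)/3$; it then pays for this (and for forcibly truncating $\P_2$ at $x_2,y_2$, which can add up to two paths) using the ladder slack, having first dealt separately with $s\in\{1,2\}$. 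This is also precisely where \lref{l:k4e} enters: it guarantees $|V(G''_1)|=n_1+4<n$ so that induction is legitimate, not merely that ``reassembly cannot be made to work'' in some configuration. Without the ladder normalisation, the add-the-edge-$x_iy_i$ trick, and an explicit resolution of the mixed case, the proposal does not close.
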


\begin{proof}
Aiming for a contradiction, suppose that $G$ is a counterexample to
\tref{t:main} of minimum order and that $G$ is not $3$-connected. By
\lref{l:bridgeless}, $G$ is bridgeless. However, by assumption, there are
two edges $e$ and $f$ whose removal disconnects $G$. Note that $e$ and $f$ are independent since $G$ is cubic and bridgeless.  Thus $G$ is the union of graphs
$G_1$, $G_2$ and $H$ (see \fref{f:ladder}) where
\begin{itemize}
    \item
$V(G_1) \cap V(G_2)=\emptyset$;
    \item
there are vertices $x_1,y_1,x_2,y_2$ such that, for $i \in \{1,2\}$, $V(G_i) \cap V(H)=\{x_i,y_i\}$ and $x_iy_i \notin E(G_i)$;
    \item
for some positive integer $s$, $H$ is the vertex disjoint union of two paths
$[x_1=u_0,\ldots,u_s=x_2]$ and $[y_1=v_0,\ldots,v_s=y_2]$ and a (possibly empty) matching with edge set $\{u_iv_i:1 \leq i \leq s-1\}$;
\end{itemize}
To find this decomposition, we initially take the two paths that define $H$ to be the edges $e$ and $f$, but then extend these paths in both directions until their respective endpoints are not adjacent. For $i \in \{1,2\}$, let $n_i=|V(G_i)|$ and note that because $G$ is cubic $n_i \geq 4$ and $n_i$ is even. Note that $|V(G)|=n_1+n_2+2s-2$.

\begin{figure}
\begin{centering}
\includegraphics[scale=0.8]{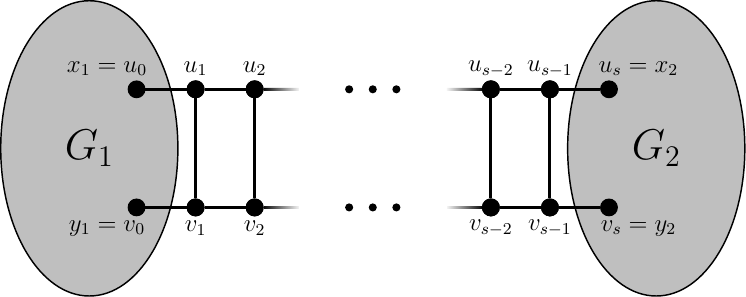}
\caption{\label{f:ladder}Structure of a bridgeless graph when the
removal of 2 edges disconnects it.}
\end{centering}
\end{figure}

Let $i \in \{1,2\}$. We claim that we can find an IPF $\P_i$ of $G_i$
such that $\nump{\P_i} \leq (n_i+2)/3$ and either
$\nump{\P_i}\leq(n_i-1)/3$ or two distinct paths of $\P_i$ end at
$x_i$ and $y_i$. If $n_i\in\{4,6\}$, then $G_i$ must be $\Kfme$ or one
of the three graphs that can be formed by removing an edge from a
cubic graph on $6$ vertices. In each case it is easy to find an IPF of
$G_i$ with two paths where one ends at $x_i$ and the other ends at
$y_i$. If $n_i \geq 8$, then by induction $G_i+\{x_iy_i\}$ has an IPF
$\P'$ with $\nump{\P'} \leq (n_i-1)/3$. We can see that
$\P=\P'\setminus\{x_iy_i\}$ is an IPF of $G_i$ that satisfies the
condition of our claim by considering two cases according to whether
$x_iy_i \in \P'$.

If, for $i \in \{1,2\}$, we have that two distinct paths of $\P_i$ end
at $x_i$ and $y_i$, then $\P=\P_1 \cup \P_2 \cup E([u_0,\ldots,u_s])
\cup E([v_0,\ldots,v_s])$ is an IPF of $G$ with $\nump{\P}=\nump{\P_1}+\nump{\P_2}-2
\leq (n_1+n_2-2)/3$ and $G$ is not a counterexample to
\tref{t:main}. So we may assume without loss of generality that it is
not the case that two distinct paths of $\P_1$ end at $x_1$ and $y_1$
and hence that $\nump{\P_1} \leq (n_1-1)/3$.

If $s \in \{1,2\}$ and $\nump{\P_2} \leq (n_2-1)/3$, then
\[\P=
\left\{
  \begin{array}{ll}
    \P_1 \cup \P_2 & \hbox{if $s=1$} \\
    \P_1 \cup \P_2 \cup \{u_1v_1\} & \hbox{if $s=2$}
  \end{array}
\right.\]
is an IPF of $G$ with $\nump{\P}=\nump{\P_1}+\nump{\P_2}+s-1 \leq (n_1+n_2+3s-5)/3$ and $G$ is not a counterexample to \tref{t:main}.

So we may further assume that either two distinct paths of $\P_2$ end
at $x_2$ and $y_2$ or $\nump{\P_2} \leq (n_2-1)/3$ and $s \geq 3$. In the
former case, let $\P^*_2=\P_2$. In the latter case let $\P^*_2$ be
obtained from $\P_2$ by, for $z \in \{x_2,y_2\}$, if two edges of
$\P_2$ are incident with $z$, deleting one of them. In either case
$\P^*_2$ is an IPF of $G_2$ such that two distinct paths of $\P_2$ end
at $x_2$ and $y_2$ and it can be checked that $\nump{\P^*_2} \leq
(n_2+2s)/3$.

Let $G''_1$ be the cubic graph of order $n_1+4$ obtained from
$G_1$ by adding the vertices $\{u_1,v_1\}$ and edges $\{u_0u_1,v_0v_1,u_1v_1\}$,
and then
pasting a copy $C$ of $\Kfme$ over $u_1v_1$.
By \lref{l:k4e}, we can assume that $s \geq 2$ if $n_2=4$ and so
$G''_1$ has fewer vertices than $G$. So, by induction $G''_1$, has an
IPF $\P''_1$ with $\nump{\P''_1} \leq (n_1+3)/3$ paths. By applying
\lref{l:lift}(ii) to $\P''_1$ we can
obtain an IPF
$\P^*_1$ of the subgraph of $G$ induced by $V(G_1) \cup \{u_1,v_1\}$
such that $\nump{\P^*_1} \leq (n_1+3)/3$ and two distinct paths of $\P_1$
end at $u_1$ and $v_1$.

Then $\P=\P^*_1 \cup \P^*_2 \cup E([u_1,\ldots,u_s]) \cup E([v_1,\ldots,v_s])$
is an IPF $\P$ of $G$ with
\[\nump{\P} \leq \nump{\P^*_1}+\nump{\P^*_2}-2  = (n_1+n_2+2s-3)/3.\]
This contradicts our assumption that $G$ is a counterexample to \tref{t:main}.
\end{proof}

\begin{proof}[\textbf{\textup{Proof of \tref{t:main}}}]
Let $G$ be a counterexample to \tref{t:main} of minimal order $n$. By
\lref{l:3connected}, $G$ is $3$-connected. So, by the theorem of
Jackson and Yoshimoto \cite{JY09}, $G$ has a $2$-factor whose cycles
all have length at least $5$.  \tref{t:23graphwfactor} then implies
that $\rho(G)\le(n-1)/3$ because $G$ is cubic and so cannot be
bad. Hence $G$ is not a counterexample to \tref{t:main} after all, and
this completes the proof of the theorem.
\end{proof}

\section{Induced path factors of regular graphs}\label{s:higherdeg}

In this section, for a fixed integer $k\ge2$, we consider asymptotics for
$\rho(G)$ for $k$-regular graphs $G$ of order $n\rightarrow\infty$.
For each $n$, define $\M_n$ to be the maximum, over all
connected $k$-regular graphs $G$ on $n$ vertices, of $\rho(G)$.
(If no such graphs exist
then we do not consider such $n$ in what follows.) Define
\begin{align*}
\overline{c}_k&=\limsup_{n\rightarrow\infty}\frac{\M_n}{n}\\
\underline{c}_k&=\liminf_{n\rightarrow\infty}\frac{\M_n}{n}\\
{c}_k&=\lim_{n\rightarrow\infty}\frac{\M_n}{n}.
\end{align*}

Our aim for this section is to find bounds for $c_k$. However, first
we must show that it is well defined.

\begin{lemma}\label{l:ckexists}
For each integer $k \geq 2$, $c_k$ exists.
\end{lemma}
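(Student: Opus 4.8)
The plan is to show that the sequence $(\M_n)$ is superadditive up to a bounded additive error, and then to invoke Fekete's lemma. First I would record the trivial bounds $0\le\M_n\le n$ for every admissible order $n$, so that $\overline{c}_k$ and $\underline{c}_k$ are finite; it then suffices to prove that $\M_n/n$ converges, which forces $\overline{c}_k=\underline{c}_k=c_k$.

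The heart of the argument is a gluing construction, which is the regular-graph analogue of \lref{l:glue}(ii) (note that literally identifying two vertices would destroy $k$-regularity, so a direct appeal is not possible). Given admissible orders $n_1,n_2$, choose connected $k$-regular graphs $G_1,G_2$ on $n_1,n_2$ vertices with $\rho(G_i)=\M_{n_i}$. Pick an edge $a_ib_i\in E(G_i)$ for each $i$, delete $a_1b_1$ and $a_2b_2$, and add the two cross edges $a_1a_2$ and $b_1b_2$. Since each of $a_1,b_1,a_2,b_2$ loses exactly one incident edge and gains exactly one, the resulting graph $G$ is a connected $k$-regular simple graph on $n_1+n_2$ vertices, so $\rho(G)\le\M_{n_1+n_2}$. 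I would then establish $\rho(G)\ge\M_{n_1}+\M_{n_2}-4$, which immediately gives $\M_{n_1+n_2}\ge\M_{n_1}+\M_{n_2}-4$.

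To prove this inequality, take any IPF $\P$ of $G$ and delete the (at most two) cross edges from its edge set; this raises the path count by at most $2$ and leaves every remaining path lying entirely inside $V(G_1)$ or entirely inside $V(G_2)$. Because the subgraph of $G$ induced on $V(G_i)$ is exactly $G_i-a_ib_i$, the paths contained in $V(G_i)$ form an IPF of $G_i-a_ib_i$. Re-inserting $a_ib_i$ can create a chord only through the unique path (if any) containing both $a_i$ and $b_i$ non-consecutively; splitting that path by deleting one of its edges lying between $a_i$ and $b_i$ restores induced-ness, yielding an IPF of $G_i$ at the cost of at most one extra path. Thus we obtain IPFs of $G_1$ and $G_2$ of total size at most $\nump{\P}+4$, so $\M_{n_1}+\M_{n_2}=\rho(G_1)+\rho(G_2)\le\nump{\P}+4$; taking $\P$ optimal gives $\M_{n_1}+\M_{n_2}\le\rho(G)+4\le\M_{n_1+n_2}+4$.

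Finally I would apply Fekete's lemma. Setting $b_n=\M_n-4$ converts the bound into genuine superadditivity $b_{n_1+n_2}\ge b_{n_1}+b_{n_2}$, whence $b_n/n$, and therefore $\M_n/n$, converges to $\sup_n b_n/n$, which is finite since $\M_n\le n$. Two bookkeeping points require care: for odd $k$ only even $n$ admit $k$-regular graphs, but the admissible orders are closed under addition, so I would run the argument on the relevant sub-semigroup (reindexing $n=2m$ when $k$ is odd) and apply Fekete there; and the superadditive bound is only needed once $G_1,G_2$ are guaranteed to exist, i.e. for all sufficiently large $n_1,n_2$, which is precisely the regime Fekete requires. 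The one genuinely delicate step is the restriction argument of the previous paragraph — checking that cutting the two cross edges and re-adding the two deleted edges alters the number of paths by only a bounded amount while preserving induced-ness on each side — whereas the passage from superadditivity to the existence of the limit is routine.
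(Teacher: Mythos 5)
Your argument is correct in substance but follows a genuinely different route from the paper's. You prove approximate superadditivity $\M_{n_1+n_2}\ge\M_{n_1}+\M_{n_2}-4$ via a degree-preserving $2$-edge swap and then invoke Fekete's lemma on $\M_n-4$, restricted to the sub-semigroup of admissible orders. The paper instead fixes $\eps>0$, takes a single near-extremal graph $G$, builds from it two $k$-regular graphs $G_1,G_2$ of nearby admissible orders using Erd\H{o}s--Gallai gadgets, and then, for every large $n$, assembles $b$ copies of $G_1$ and $c$ copies of $G_2$ (with $b,c$ determined by a congruence so that the total order is exactly $n$) by repeated subdivide-and-identify operations; \lref{l:glue} then gives $\M_n\ge(\overline{c}_k-\eps)n$ directly, so $\underline{c}_k\ge\overline{c}_k-\eps$. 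Your approach is shorter and avoids all of the gadgetry and modular arithmetic, at the price of needing the (routine) extension of Fekete's lemma to sequences defined on a cofinite subset of a numerical semigroup, which you correctly flag. Your key inequality is sound: deleting the two cross edges from an optimal IPF of the swapped graph costs at most two paths and leaves induced paths of $G_i-a_ib_i$ on each side, and reinstating $a_ib_i$ costs at most one further split per side, exactly as you describe.

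One small gap to patch: you assert without justification that the swapped graph is connected. If $k$ is odd, a connected $k$-regular graph can contain bridges, and if both $a_1b_1$ and $a_2b_2$ happen to be bridges then your graph splits into two components (the piece containing $a_1,a_2$ and the piece containing $b_1,b_2$), so the inequality $\rho(G)\le\M_{n_1+n_2}$ is not available as stated. The fix is one line: a connected $k$-regular graph with $k\ge2$ contains a cycle, so choose $a_1b_1$ on a cycle of $G_1$; then $G_1-a_1b_1$ is connected and the two cross edges attach every component of $G_2-a_2b_2$ to it. (For even $k$ no edge of a $k$-regular graph is a bridge, so nothing extra is needed.)
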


\begin{proof}
First we consider the case when $k$ is even. The case $k=2$ is trivial
since $\underline{c}_2=c_2=\overline{c}_2=0$ because every connected
$2$-regular graph has an IPF with $2$ paths. So we may assume that
$k\ge4$.

Fix $\eps>0$. As $k$ is constant, the definition of $\overline{c}_k$
implies that for some suitably large $n_1$ there exists a $k$-regular
graph $G$ of order $n_1$ such that
$\rho(G)\geq(\overline{c}_k-\eps)(n_1+k+4)+1$.

By the Erd\H{o}s-Gallai Theorem \cite{EG60} there exist graphs $F_1$
and $F_2$ of respective orders $k+2$ and $k+3$, with one vertex of
degree $k-2$ and all other vertices of degree $k$. By subdividing an
edge of $G$ with a new vertex that we then identify with the vertex
of degree $k-2$ in $F_1$, we create a $k$-regular graph $G_1$ with
$a=n_1+k+2$ vertices. We use $F_2$ in a similar fashion to create a
$k$-regular graph $G_2$ with $a+1$ vertices. By \lref{l:glue},
$\rho(G_i)\ge\rho(G)\ge(\overline{c}_k-\eps)(a+2)+1$ for $i\in\{1,2\}$.

Now, for any sufficiently large $n$, we make a $k$-regular graph $G'$
of order $n$ as follows. Let $c$ be the least positive integer
satisfying $(k-2)c\equiv(k-2)n+2\pmod{ka-2a+2}$ and let
\[b=\frac{(k-2)(n-c)+2}{ka-2a+2}-c.
\]
Note that $\gcd(k-2,ka-2a+2)=\gcd(k-2,2)=2$ which divides $(k-2)n+2$
so $c$ exists. Also, our choice of $c$ ensures that $b$ is an integer,
and $b>0$ because $n$ is large. We start with $b$ copies of $G_1$ and
$c$ copies of $G_2$ and progressively glue these components together
to form $G'$. Each gluing step takes $k/2$ existing components,
subdivides one edge in each component with a new vertex and
identifies these new vertices. The number of gluing steps is
$(b+c-1)/(k/2-1)$ since each step reduces the number of
components by $k/2-1$. The number of vertices in the resulting graph
$G'$ is
\begin{equation}\label{e:neven}
ba+c(a+1)+\frac{b+c-1}{k/2-1}
=(b+c)(a+2/(k-2))+c-2/(k-2)
=n.
\end{equation}
Also, we started with
$b+c$ components and glued them together, so by \lref{l:glue},
$$\rho(G')\ge b\rho(G_1)+c\rho(G_2)-(b+c)
\ge (\overline{c}_k-\eps)(b+c)(a+2)\ge(\overline{c}_k-\eps)n$$
where the last inequality follows from \eref{e:neven}.
As $n$ was an arbitrary large integer, it follows that
$\underline{c}_k\ge\overline{c}_k-\eps$. But $\eps$ was an
arbitrary positive quantity, so we must have
$\underline{c}_k=\overline{c}_k$, from which it follows that the limit
$c_k$ exists.

It remains to consider the case when $k$ is odd. It works similarly,
but is complicated by the fact that $k$-regular graphs only exist for
even orders.  For some large even integer $a$, we make $G_1$ and $G_2$
of orders $a$ and $a+2$ with $\rho(G_i)\ge(\overline{c}_k-\eps)(a+4)+1$
for $i\in\{1,2\}$. Our gluing steps each involve $k-1$ components.
Two new adjacent vertices are introduced and
$(k-1)/2$ components are glued on each of these two vertices. This reduces
the number of components by $k-2$, so we want $b,c$ to be solutions to
\[
ba+c(a+2)+2\frac{b+c-1}{k-2}=n.
\]
We can take $c$ to be the least positive solution to
$2(k-2)c\equiv(k-2)n+2\pmod{ka-2a+2}$ and let
\[
b=\frac{(k-2)(n-2c)+2}{ka-2a+2}-c.
\]
Note that $\gcd(2(k-2),ka-2a+2)=2$ which divides $(k-2)n+2$ so
$c$ exists. The remainder of the argument mimics the case for even~$k$.
\end{proof}

As mentioned in the proof of \lref{l:ckexists}, $c_2=0$. For larger
$k$ it seems to be a difficult problem to find the exact value of
$c_k$, so instead we look for bounds. Of course, $c_3\le1/3$ by
\tref{t:main}.  We will show that $1/3<c_k\le1/2$ for all $k>3$ and
that $c_k\rightarrow1/2$ as $k\rightarrow\infty$. Note that
$c_k\le1/2$ for all $k$, by \tref{t:INPF}.

In trees all paths are induced. In several of our subsequent results
we use constructions based on perfect $(k-1)$-ary trees.
The root of a $(k-1)$-ary tree has
degree $k-1$, while all other vertices have degree
$k$ or degree $1$ (in the latter case the vertex is a {\it leaf}).
A $(k-1)$-ary tree is {\em perfect} if all its leaves are at the
same distance from the root. In that case the distance from the root
to a leaf is called the {\em height}. We refer to the distance of a
vertex from the root as its {\it depth}. The unique neighbour of a
vertex that has smaller depth than it is its {\it parent}; its other
neighbours have greater depth than it and are its {\it children}.

Our constructions will also often create graphs that contain blocks
that are copies of a complete graph $K_m$ with one edge subdivided.
An IPF of such a block has at least $\lceil m/2 \rceil$ paths, by
\lref{l:glue}.

\begin{lemma}\label{l:covertree}
Let $k\ge3$ and let $T$ be a perfect $(k-1)$-ary tree of
height $h$. Then $\rho(T)=\frac1k\left((k-1)^{h+1}+(-1)^h\right)$.
\end{lemma}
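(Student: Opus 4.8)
Since $T$ is a tree it contains no cycles, so every subgraph that is a path is automatically induced; hence $\rho(T)$ coincides with the ordinary minimum number of vertex-disjoint paths needed to cover $V(T)$, and in building covers I may partition $V(T)$ into paths freely. The plan is to induct on the height $h$, exploiting the fact that the perfect $(k-1)$-ary tree $T_h$ of height $h$ consists of a root $r$ joined to $k-1$ disjoint copies of $T_{h-1}$. To make the induction go through I will track two quantities: $B_h := \rho(T_h)$, and $A_h$, the minimum size of a cover of $T_h$ in which $r$ is an endpoint of its path or a trivial path (equivalently, $r$ has degree at most $1$ in the cover, so that it could later be joined to a parent).

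For the recursion, consider any cover $\P$ of $T_h$ and let $S$ be the set of children of $r$ joined to $r$ by a path-edge, so $|S|\in\{0,1,2\}$. Using the identity $\nump{\P}=|V|-|E(\P)|$ from \sref{s:prelim} together with the fact that distinct subtrees meet only at $r$, one gets $\nump{\P}=1-|S|+\sum_i\nump{\P_i}$, where $\P_i$ is the restriction of $\P$ to the $i$th copy of $T_{h-1}$. A child in $S$ must be an endpoint in its own subtree (contributing at least $\alpha:=A_{h-1}$), while a child outside $S$ contributes at least $\beta:=B_{h-1}$. Writing $V_j$ for the resulting bound when $|S|=j$, this yields the three candidate values $V_0=1+(k-1)\beta$, $V_1=\alpha+(k-2)\beta$, and $V_2=2\alpha+(k-3)\beta-1$, and matching constructions (using optimal sub-covers, which exist by induction) show $A_h=\min(V_0,V_1)$ and $B_h=\min(V_0,V_1,V_2)$.

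The key structural observation is that $V_0,V_1,V_2$ form an arithmetic progression with common difference $(\alpha-\beta)-1$. I will carry the extra inductive invariant $A_{h-1}-B_{h-1}\in\{0,1\}$, with the value determined by parity: $A_h=B_h$ when $h$ is even and $A_h=B_h+1$ when $h$ is odd. When $h$ is even the hypothesis gives $\alpha-\beta=1$, the common difference is $0$, all three $V_j$ coincide, and $B_h=V_0=1+(k-1)\beta$; when $h$ is odd the hypothesis gives $\alpha-\beta=0$, the difference is $-1$, so $V_2$ is smallest and $B_h=V_2=(k-1)\beta-1$, while $A_h=V_1=B_h+1$. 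Substituting the inductive closed form for $\beta=B_{h-1}$ into each case and simplifying recovers exactly $B_h=\tfrac1k\big((k-1)^{h+1}+(-1)^h\big)$ and re-establishes the parity invariant, closing the induction; the base case $h=0$ is the single vertex, where $A_0=B_0=1$ agrees with the formula.

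The main point requiring care is the correct choice and maintenance of the auxiliary quantity $A_h$: the optimal behaviour of the root depends on $h$ (for even $h$ it may as well be an endpoint, whereas for odd $h$ it is strictly better to route a path through it), and it is precisely the invariant $A_{h-1}-B_{h-1}\in\{0,1\}$ that controls which of $V_0,V_1,V_2$ wins. The fortunate fact that the three candidates sit in arithmetic progression with difference $0$ or $-1$ is what forces the clean two-state parity behaviour; verifying this, rather than the final algebraic substitution, is the crux of the argument.
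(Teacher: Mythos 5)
Your proof is correct, and it takes a genuinely different route from the paper's. The paper first normalises an optimal IPF of $T$ so that every path ends at a leaf (a local exchange pushes path-ends downward), then deletes the leaf-to-leaf path through the root; this leaves a forest of perfect subtrees of \emph{all} smaller heights and produces the full-history recurrence $a(h)=1+(k-3)a(h-1)+2(k-2)\sum_{i=0}^{h-2}a(i)$, which is then solved by induction. You instead run a two-state dynamic program over a single level, tracking $B_h=\rho(T_h)$ together with the root-constrained quantity $A_h$, with the case split governed by the degree $|S|\in\{0,1,2\}$ of the root in the cover. Your edge-counting identity $\nump{\P}=1-|S|+\sum_i\nump{\P_i}$, the three candidates $V_0,V_1,V_2$, the observation that they are in arithmetic progression with common difference $(A_{h-1}-B_{h-1})-1$, and the parity invariant $A_h-B_h\in\{0,1\}$ are all correct, and the algebra closes in both parity cases. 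What each approach buys: the paper gets a single explicit recurrence at the price of the preliminary normalisation lemma and a sum over all smaller heights; your version avoids the normalisation entirely, is first-order in $h$, and makes the origin of the $(-1)^h$ term transparent (it is exactly the alternation of $A_h-B_h$). The only point worth spelling out more fully in a written version is why the restriction $\P_i$ to a child's subtree is itself a path cover in which that child is a path-end whenever the child lies in $S$ --- immediate because the subtree meets the rest of $T_h$ only through its single edge to the root, but it is the step that licenses the bound $\nump{\P_i}\ge A_{h-1}$.
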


\begin{proof}
Consider a minimal IPF for $T$.  We first argue that without loss of
generality, no path ends at a non-leaf vertex. Suppose this is not
true for a particular IPF. Locate the vertex $v$ of least depth at
which a path ends.  Since $k\ge3$ there is some child $w$ of $v$ which
is not in the path that ends at $v$. We add the edge $vw$. By the
minimality of our IPF, it must include edges $wx$ and $wx'$ where
$x$ and $x'$ are children of $w$. Remove the edge $wx$.  In this
way we create another minimal IPF, and we have reduced the number of
paths that end at the depth of $v$.  So by repeating this process we
can move all ends of paths to the leaves.

Let $a(h)$ be the minimum number of disjoint paths needed to cover a
perfect $(k-1)$-ary tree of height $h$. By the above, we assume
that all paths end at leaves. So by removing the vertices on the path
through the root, we obtain the recurrence
\[
a(h)=1+(k-3)a(h-1)+2(k-2)\sum_{i=0}^{h-2}a(i),
\]
with initial condition $a(0)=1$. We now show that
$a(h)=\frac1k\left((k-1)^{h+1}+(-1)^h\right)$ by induction on $h$. The
formula works for $h=0$. Assuming that it works up to $h-1$, we find that
\begin{align*}
a(h)&=1+\frac1k(k-3)\left((k-1)^{h}+(-1)^{h-1}\right)
+\frac2k(k-2)\sum_{i=0}^{h-2}\left((k-1)^{i+1}+(-1)^i\right)\\
&=\frac1k\Big(k+(k-3)\left((k-1)^{h}+(-1)^{h-1}\right)
+2\left((k-1)^{h}-(k-1)\right)+(2k-4)\chi_{h}\Big)\\
&=\frac1k\Big(k+(k-1)^{h+1}+(-1)^{h-1}(k-3)
-2k+2+(2k-4)\chi_{h}\Big)\\
&=\frac1k\Big((k-1)^{h+1}+(-1)^{h}\Big),
\end{align*}
where $\chi_h=0$ if $h$ is odd and $\chi_h=1$ if $h$ is even.
The result follows.
\end{proof}

We are now ready to give lower bounds on $c_k$ for general $k$. We
treat the cases of odd and even $k$ separately. For each case, we will
construct a family of graphs that are $k$-regular except that the root
vertex has degree less than $k$. It is a simple matter to obtain a
$k$-regular graph by adding a fixed gadget to the root. Doing so
changes the induced path number by $O(1)$, which will be insignificant
for our asymptotics. Hence, for simplicity, we omit details of the
gadgets and pretend that the graphs we build are in fact $k$-regular.

\begin{theorem}\label{t:oddck}
We have $c_3\ge5/18$. For odd $k>3$ we have
\[
c_k\ge \frac12-\frac{3k-4}{k^2(k-1)}=\frac12-O(k^{-2}).
\]
\end{theorem}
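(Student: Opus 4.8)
The plan is to give explicit constructions of connected $k$-regular graphs whose induced path number is forced to be large, and to show that $\rho(G)/n$ approaches the claimed lower bound as $n\to\infty$. The natural building block is the perfect $(k-1)$-ary tree, for which \lref{l:covertree} gives the exact value $\rho(T)=\frac1k\big((k-1)^{h+1}+(-1)^h\big)$. A tree on roughly $(k-1)^h$ vertices has induced path number roughly $\frac{k-1}{k}\cdot(k-1)^h$, giving a ratio approaching $\frac{k-1}{k}$. However, a tree is not regular (its leaves have degree $1$), so the main task is to close up the leaves into a genuinely $k$-regular graph in a way that does not create too many edges available for merging paths, and thereby does not reduce the induced path number too drastically.

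For odd $k$ I would first construct the graph. Take a perfect $(k-1)$-ary tree $T$ of height $h$ and deal with its leaves. Since $k$ is odd, $k-1$ is even, so the leaves can be paired up and joined — the cleanest device is to add edges (or small gadgets) between leaves so that each leaf reaches degree $k$, ideally in a way that the added edges span only short cycles or are otherwise forced to lie outside long induced paths. The count \eref{e:neven}-style bookkeeping then gives the total order $n$ in terms of $h$ and $k$, which is dominated by the number of leaves, namely $\sim(k-1)^h$. The key inequality to establish is a lower bound $\rho(G)\ge\rho(T)-(\text{correction})$, where the correction accounts for the edges added among the leaves: each such edge can merge at most two paths into one, so if $L$ is the number of leaves then at most $O(L)$ paths can be saved, but crucially the constant must be sharp enough that the surviving ratio is $\frac12-\frac{3k-4}{k^2(k-1)}$ rather than merely $\frac12-O(k^{-1})$.

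The arithmetic is where the precise bound comes from. Writing $N=(k-1)^h$ for the leaf count and tracking the lower-order terms in \lref{l:covertree}, the tree contributes $\rho(T)\approx\frac{k-1}{k}N$ paths on $\approx N$ vertices, but regularising the leaves adds $\approx\frac{k-2}{2}N$ extra vertices (each leaf needs $k-1$ new incident edges, contributing to degree, and the half-edges must be balanced) so that $n\approx\frac{k}{2}N$; simultaneously each added edge between two leaves allows at most one path-merge, costing at most $\approx\frac{k-2}{2}\cdot\frac{N}{?}$ paths. The bound $c_k\ge\frac12-\frac{3k-4}{k^2(k-1)}$ should emerge by carefully optimising how the leaves are interconnected so that the number of merge-enabling edges is minimised relative to the vertex count. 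I expect the ratio to simplify to $\frac12$ in the leading term because $\frac{k-1}{k}\cdot\frac{2}{k}=\frac{2(k-1)}{k^2}\to 0$ is the wrong leading behaviour — so the construction must instead make the tree's \emph{internal} structure dominate, e.g.\ by attaching the regularising gadgets sparsely so that almost all of $G$'s vertices lie on forced-distinct induced paths.

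\textbf{The main obstacle} will be the lower bound on $\rho(G)$ after regularisation: in the bare tree every edge lies on some induced path and \lref{l:covertree} is exact, but once we add edges among the leaves (or attach gadgets) the adversary building an IPF may route long induced paths through these new edges, merging many tree-paths and collapsing $\rho$. Controlling this requires showing that any induced path in $G$ can use only a bounded number of the added edges before it ceases to be induced — for instance by arranging the leaf-connections so that any two added edges incident to a common small region create a chord that destroys inducedness. Quantifying this ``at most a constant fraction of paths can be saved'' bound tightly enough to yield the $O(k^{-2})$ error term, rather than a cruder $O(k^{-1})$, is the delicate step, and it is presumably why the odd case is singled out and stated with an explicit rational correction: the evenness of $k-1$ lets the leaves be matched perfectly, which is exactly what makes the sharper constant attainable.
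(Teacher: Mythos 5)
There is a genuine gap: your construction is never actually specified, and the arithmetic you sketch shows (as you yourself observe) that the route you have in mind cannot produce a ratio tending to $1/2$. You try to extract the constant $1/2$ from the tree itself, pairing up leaves or joining them by ``edges or small gadgets,'' and you compute a leading behaviour that you concede is wrong. The point you are missing is that in the intended construction the tree is only a connecting scaffold: almost all of the vertices must live inside dense attached blocks whose induced path number is intrinsically about half their order. Concretely, the paper glues onto each leaf $\ell$ exactly $(k-1)/2$ copies of $K_{k+1}$ with one edge subdivided (the subdivision vertex identified with $\ell$); by \lref{l:glue} each such block forces at least $\lceil(k+1)/2\rceil$ paths on $k+2$ vertices, which is where the $1/2$ comes from, while the tree contributes only the $O(k^{-2})$ correction via the exact count of \lref{l:covertree}. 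This is why $k$ odd is needed: $(k-1)/2$ must be an integer so the leaves reach degree $k$. Without some such clique-based gadget, no matching or sparse interconnection of the leaves will get you past a constant ratio bounded away from $1/2$.

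The ``delicate step'' you correctly flag --- that an IPF might route paths through the new edges and merge many tree-paths --- also needs an actual argument, and the one in the paper is quite specific: for $k>3$ one shows any IPF can be locally modified, without increasing the number of paths, so that no path uses a leaf-to-parent edge of $T$; the exchange uses the fact that $K_{k-2}$ (what remains of a block after deleting four vertices) has odd order when $k$ is odd, forcing a trivial path inside the block that can absorb the swap. After that, the block counts and \lref{l:covertree} applied to the tree minus its last layer give the bound. Your proposal contains no mechanism of this kind. Finally, the case $k=3$ requires a separate computation (applying \lref{l:covertree} to the whole tree and then \lref{l:glue} for each attached subdivided $K_4$, yielding $5/18$), which your write-up does not address at all.
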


\begin{proof}
Start with a perfect $(k-1)$-ary tree $T$ of height $h$. We are
primarily interested in the behaviour of our construction as $h$
becomes large. On each leaf vertex $\ell$, glue $(k-1)/2$ blocks, each
of which is a copy of $K_{k+1}$ with one edge subdivided (the vertex
on the subdivided edge is merged with $\ell$). The resulting graph $G$ is
$k$-regular (except the root), with
\begin{equation}\label{e:vertsodd}
  n=\frac{(k-1)^{h+1}}{k-2}+O(1/k)+\frac12(k-1)^h(k-1)(k+1)
\end{equation}
vertices.

Consider an IPF $\P$ for $G$ and suppose for the moment that $k>3$. If
$\P$ includes the edge from a leaf $\ell$ of $T$ to its parent, remove
this edge from $\P$ and replace it with another edge as follows. Choose
a block $B$ which is glued onto $\ell$, but which contains no neighbour
of $\ell$ in $\P$. The neighbours $u$ and $v$ of $\ell$ in $B$ must both
be ends of paths in $\P$. If they are on different paths in $\P$ then
we simply add the edge $\ell u$ and we are done. Otherwise, $\P$ includes
a path $[u,w,v]$. But $B-\{\ell,u,v,w\}$ is a $K_{k-2}$, and $k-2$ is
odd. Hence $\P$ includes a trivial path, say $[x]$. We replace $[u,v,w]$ and
$[x]$ by $[\ell,u,x]$ and $[v,w]$. In this way, we have not changed the
total number of paths in our IPF, but have removed the edge from $\ell$
to its parent. We repeat this process for all leaves $\ell$.

Now the blocks glued on $\ell$ need $\frac14(k-1)(k+1)-1$ paths
to cover them (the $-1$ is from the path that includes
$\ell$). Applying \lref{l:covertree} to all layers of $T$ except the
last, the number of paths needed to cover $G$ is
\[
(k-1)^h\big((k^2-1)/4-1+1/k\big)+O(1/k).
\]
Combined with \eref{e:vertsodd}, and taking $h\rightarrow\infty$, we find that
\[
c_k\ge\frac{\frac14(k^2-1)-1+1/k}{
\frac{k-1}{k-2}+\frac12(k-1)(k+1)}
=\frac12-\frac{3k-4}{k^2(k-1)}.
\]
Finally, consider the case when $k=3$. Here we apply
\lref{l:covertree} to the whole initial tree $T$.
Then, \lref{l:glue} tells us the effect of gluing a
subdivided $K_4$ onto each of the $(k-1)^h$ leaves.
The conclusion is that $\rho(G)\ge\frac{1}{k}(k-1)^{h+1}+O(1/k)+(k-1)^h$.
Combined with
\eref{e:vertsodd}, and taking $h\rightarrow\infty$, we find that
\[
c_3\ge\frac{\frac{1}{k}(k-1)+1}{
\frac{k-1}{k-2}+\frac12(k-1)(k+1)}
=\frac{2(2k-1)(k-2)}{
k^2(k-1)^2}
=\frac{5}{18}
\]
as claimed.
\end{proof}

\begin{theorem}
We have $c_4\ge3/7$. For even $k\ge4$ we have
\[
c_k\ge\frac{1}{2}-\frac{1}{2k-2}=\frac12-O(k^{-1}).
\]
\end{theorem}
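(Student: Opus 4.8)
The plan is to mirror the construction behind \tref{t:oddck}, building a family of (almost) $k$-regular graphs $G$ from a perfect $(k-1)$-ary tree $T$ of height $h$ with dense blocks glued onto its leaves, and then letting $h\to\infty$. As in the odd case the blocks will be copies of a complete graph with one edge subdivided: such a block on $m$ vertices has induced path number at least $\lceil m/2\rceil$ by \lref{l:glue}, and it is this abundance of short forced paths that keeps $\rho(G)$ close to $n/2$. For each leaf $\ell$ of $T$ I would attach blocks glued at their subdivided vertex (which contributes degree $2$ to $\ell$) so that, together with the edge to its parent, $\ell$ reaches degree $k$; the root is patched to degree $k$ by an $O(1)$ gadget as elsewhere in \sref{s:higherdeg}. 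I would then compute $n$ as (tree vertices)$\,+\,$(number of leaves)$\times$(block vertices), bound $\rho(G)$ from below using \lref{l:covertree} for the tree together with \lref{l:glue} for the glued blocks, and read off $c_k=\lim \rho(G)/n$ as $h\to\infty$.

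The hard part will be parity. For even $k$ the residual degree $k-1$ that each leaf must acquire from its blocks is \emph{odd}, whereas every block glued at a degree-$2$ subdivided vertex contributes an even amount; indeed a degree-sum argument shows that any gadget identified at a leaf must contribute an even amount to that leaf's degree if all of its other vertices are to have degree $k$ (otherwise the degree sum of the gadget would be odd). Hence a leaf fed only by a single parent edge and such gadgets always has odd degree, so the clean ``tree $+$ gadgets on leaves'' template used for odd $k$ simply cannot be made $k$-regular, and one is forced to add extra edges among the leaves, for instance a near-perfect matching on the leaf set, which adds no new vertices. This same parity failure also breaks the rerouting step in the proof of \tref{t:oddck}, which deleted leaf-to-parent edges from an arbitrary IPF by using that the leftover clique $K_{k-2}$ has odd order and hence contains a trivial path; for even $k$ that clique has even order, so I would have to replace this either with a block whose leftover part has odd order or with the cruder glue-based bound of the $k=3$ case. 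The extra leaf edges are precisely what weaken the bound to $\tfrac12-O(k^{-1})$ rather than the $\tfrac12-O(k^{-2})$ of the odd case: each such edge can merge two paths and so lower $\rho(G)$ by one, and accounting honestly for this loss across the $\Theta((k-1)^h)$ leaves produces a deficit of order $1/k$, in line with the stated $\tfrac{1}{2k-2}$.

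Finally, for $k=4$ I would argue separately, exactly as $k=3$ was singled out in \tref{t:oddck}, to reach the stronger value $3/7$. Here $\tfrac{k-2}{2}=1$, so each leaf carries a single block (a subdivided $K_5$, with $\rho\ge3$), and instead of deleting leaf edges I would apply \lref{l:covertree} to the whole of $T$ and then invoke \lref{l:glue}(ii) to account for the one block glued at each of the $(k-1)^h$ leaves. The delicate point, which I expect to be the fiddly part, is choosing this small block together with the extra leaf-connections that fix parity so that the resulting lower bound on $\rho(G)$ is as large as possible; optimizing these choices is what should yield $3/7$ in place of the generic $\tfrac13$, after a routine computation of $n$ and the path count as $h\to\infty$. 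Note $3/7>1/3$, so this special case also covers the $k=4$ instance of the general even bound.
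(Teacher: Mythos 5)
There is a genuine gap here: your plan has the right flavour (glue subdivided complete graphs at cut vertices and count forced paths via \lref{l:glue}), and you correctly diagnose the parity obstruction for even $k$, but the concrete route you describe does not reach either of the stated constants. For general even $k$ the paper sidesteps parity entirely by using an $h$-cycle, not a tree, as the base: each cycle vertex already has even degree $2$ and receives $(k-2)/2$ subdivided copies of $K_{k+1}$, and one application of \lref{l:glue} gives $\rho(G)\ge 2+hk(k-2)/4$ on $n=h\bigl(1+(k+1)(k-2)/2\bigr)$ vertices, which is exactly $\tfrac12-\tfrac1{2k-2}$ in the limit. Your tree-plus-leaf-matching template, evaluated with the ``cruder glue-based bound'' you fall back on, yields
\[
\frac{k(k-2)/4}{\tfrac{k-1}{k-2}+\tfrac{(k+1)(k-2)}{2}},
\]
which is strictly less than $\tfrac{k-2}{2(k-1)}=\tfrac12-\tfrac1{2k-2}$, because the tree spends $\tfrac{k-1}{k-2}>1$ base vertices per leaf where the cycle spends one. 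This proves the asymptotic $\tfrac12-O(k^{-1})$ but not the stated inequality. Recovering the loss would require adding $\rho(T)$ to the block count, i.e.\ something like $\rho(T+M)\ge\rho(T)$ for the tree plus the parity-fixing matching $M$, which \lref{l:covertree} does not provide and which you do not prove; likewise your ``one path lost per matching edge'' accounting is asserted rather than derived.

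For $k=4$ the gap is larger. The paper's construction is a perfect \emph{binary} tree with an edge added between every pair of siblings (this fixes all degrees at $4$ with no leaf matching and no new vertices) together with one subdivided $K_5$ per leaf; the crux is a rerouting argument --- if an IPF uses a sibling edge $vw$ then the parent $u$ must end a path, so $vw$ can be traded for $uv$ --- which shows that no induced path can meet two distinct $K_5$ blocks, forcing $3\cdot 2^h$ paths on $7\cdot 2^h-1$ vertices and hence $3/7$. Your sketch (\lref{l:covertree} on a $3$-ary tree plus \lref{l:glue} for one block per leaf, plus a parity-fixing matching) gives at best about $11/26<3/7$ even if one grants the unproven tree bound, and only $4/13$ without it. The sibling-edge idea and the ``no path crosses between two blocks'' argument are the missing ingredients; ``optimizing these choices'' is not a substitute for them.
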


\begin{proof}
Fix an even $k\ge4$. Start with an $h$-cycle $C$ and on each vertex
glue $(k-2)/2$ blocks, each of which is a $K_{k+1}$ with one edge
subdivided. This produces a $k$-regular graph $G$ with
$n=h+h(k+1)(k-2)/2$ vertices. Since $\rho(C)=2$ and
$\rho(K_{k+1})=(k+2)/2$, \lref{l:glue} implies that
$\rho(G)\ge2+h(k-2)k/4$. Taking $h\rightarrow\infty$ gives
$$c_k\ge\frac{(k-2)k/4}{1+(k+1)(k-2)/2}=\frac{1}{2}-\frac{1}{2k-2}$$
as claimed.

While the above argument works for $k=4$, we now provide a separate
construction which gives a stronger result in this case.
Take a perfect 2-tree $T$ of height $h$ and add an edge
between each pair of vertices that are children of the same
parent. Now, for each vertex $\ell$ that was a leaf of $T$, add a copy
of $K_5$ with a subdivided edge (the vertex on the subdivided edge is
identified with $\ell$). The result is a graph with $2^{h+1}-1+2^h5$
vertices that is $4$-regular except for the root.

Suppose we have an IPF for this graph. Consider the two children $v$
and $w$ of a non-leaf vertex $u$ in $T$. If our IPF includes the edge
$vw$ then there must be a path that ends at $u$ (since the neighbours
of $u$, other than $v$ and $w$, are adjacent). Thus we can remove the
edge $vw$ and add the edge $uv$ to get an IPF with the same number of
paths. Repeating this process we obtain an IPF in which no path
includes both children of a vertex in $T$ and hence no path includes
vertices from two distinct subdivided copies of $K_5$. There are $2^h$
subdivided copies of $K_5$, and each requires $3$ paths to cover it.
Thus the graph needs at least $2^h3$ paths in any IPF.  Taking
$h\rightarrow\infty$, it follows that $c_4\ge 3/(2+5)=3/7$, as
claimed.
\end{proof}

Despite trying several alternative constructions, we were unable
to find one for even $k$ which gave an error term matching the one
that we obtained for odd $k$ in \tref{t:oddck}.
Nevertheless, we do not believe there is
a great intrinsic difference between the two cases.
\begin{conjecture}
We have $c_k=1/2-O(k^{-2})$ as $k\rightarrow\infty$.
\end{conjecture}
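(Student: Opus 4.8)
The upper bound $c_k\le\frac12$ already holds for every $k$ by \tref{t:INPF}, and \tref{t:oddck} supplies the matching lower bound $c_k\ge\frac12-O(k^{-2})$ for odd $k$. Thus the conjecture is equivalent to constructing, for even $k$, connected $k$-regular graphs $G$ with $\rho(G)\ge(\frac12-O(k^{-2}))n$. The plan is to produce an even-$k$ analogue of the leafy-tree construction used for \tref{t:oddck}, overcoming the single parity obstruction responsible for the weaker $\frac12-O(k^{-1})$ bound in the even case.

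To locate the difficulty, recall that the efficient block in these constructions is a copy of $K_{k+1}$ with one edge subdivided, and that once the subdivision vertex is charged to the skeleton the block behaves like $K_{k+1}$ with one edge removed. For odd $k$ this remnant has even order $k+1$ and induced-path number exactly half its order, and the reshuffling step that deletes every leaf-to-parent edge without increasing the number of paths succeeds because the complementary clique $K_{k-2}$ has odd order, forcing a trivial path that can be traded against the deleted edge. When $k$ is even both parities reverse: the remnant has odd order and ratio only $\frac12-\Theta(k^{-1})$, while the complementary $K_{k-2}$ is evenly matchable, so the reshuffle loses its slack. A minimal IPF can then ``snake'' induced paths along the connecting edges and save $\Omega(n/k)$ paths, which is precisely the gap between $\frac12-O(k^{-1})$ and $\frac12-O(k^{-2})$.

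The plan is therefore to restore a parity-correcting mechanism at each connection. First I would take the perfect $(k-1)$-ary tree of height $h$ (as in \lref{l:covertree}) and hang $K_{k+1}$-type blocks at its leaves, so that almost all vertices lie in blocks whose induced-path number is at least $(\frac12-O(k^{-2}))$ times their order, while the tree vertices dilute this ratio by only $O(k^{-2})$. I would then prove an even-$k$ decoupling lemma: every IPF may be modified, without increasing the number of paths, so that no edge joining a block to the tree is used. Once this holds the blocks are effectively isolated and \lref{l:glue} yields the required lower bound. To secure the slack needed for the reshuffle despite even parity, I would adjust each leaf gadget in one of two ways: either insert one extra vertex per leaf, or pair up leaves and share a block between the pair, so that the clique surviving the reshuffle has \emph{odd} order and again forces a trivial path; or else attach each connecting edge at a vertex whose two block-neighbours are adjacent, so that the induced $P_3$ on which the hard case of the reshuffle depends cannot occur at all.

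I expect the parity-correcting gadget to be the main obstacle, because three requirements pull against one another precisely because $K_{k+1}$ is itself $k$-regular: any edge used to connect a block to the rest of the graph forces an internal clique edge to be deleted, and it is exactly this deletion that flips the block to odd effective order and removes the matchability the reshuffle exploits. Designing a gadget that keeps the graph exactly $k$-regular and connected, keeps every block of near-even effective order so that its ratio is $\frac12-O(k^{-2})$, and still guarantees a forced trivial path (or forbids the offending induced $P_3$) in every IPF is the delicate point; checking that no induced path can snake between blocks is the accompanying technical content of the decoupling lemma. Should a direct construction prove elusive, a fallback is to exhibit an explicit locally clique-like $k$-regular graph whose induced linear forests provably carry at most $\frac12 n+O(n/k^2)$ edges; note, however, that globally sparse graphs such as random $k$-regular graphs cannot serve here, since large $\rho$ demands the dense, clique-like local structure that keeps induced paths short.
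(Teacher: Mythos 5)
This statement is a conjecture, not a theorem: the paper offers no proof of it, and the authors explicitly record that they ``tried several alternative constructions'' for even $k$ without finding one that matches the odd-$k$ error term. Your proposal does not close this gap either. You correctly reduce the conjecture to a lower-bound construction for even $k$ (the upper bound $c_k\le 1/2$ is \tref{t:INPF}, and \tref{t:oddck} handles odd $k$), and your diagnosis of the parity obstruction is accurate: in the odd-$k$ reshuffle the clique $B-\{\ell,u,v,w\}\cong K_{k-2}$ has odd order and therefore forces a trivial path that can be traded against the deleted leaf-to-parent edge, and this is exactly what fails when $k$ is even. But a diagnosis is not a construction. The entire content of the conjecture lives in the ``parity-correcting gadget'' that you defer, and none of your three candidate fixes is carried out or even well-posed: inserting one extra vertex per leaf destroys $k$-regularity; sharing a block between paired leaves changes $\rho$ of the block and the decoupling argument in ways you do not analyse; and attaching at a vertex whose block-neighbours are adjacent is incompatible with the subdivided-$K_{k+1}$ gadget, since there the two neighbours $u,v$ of the attachment vertex are precisely the endpoints of the subdivided edge and hence non-adjacent --- which is why the induced path $[u,w,v]$ can occur in the first place.

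The proposed ``even-$k$ decoupling lemma'' is likewise only asserted. In the odd case the paper proves the analogous statement by an explicit exchange argument whose validity hinges on the odd order of $K_{k-2}$; you acknowledge that the slack disappears for even $k$ but give no replacement mechanism, only the hope that some modified gadget will restore it. Without an explicit family of connected $k$-regular graphs and a verified count showing $\rho(G)\ge\bigl(\tfrac12-O(k^{-2})\bigr)n$, the proposal is a research plan for attacking an open problem, not a proof of the statement.
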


\subsection*{Acknowledgement}

This research was supported by ARC grants DP150100506 and FT160100048.
Prof.~Akbari is grateful for the hospitality of Monash University,
Australia.  Prof.~Wanless is grateful for the hospitality of IPM,
Tehran.

  \let\oldthebibliography=\thebibliography
  \let\endoldthebibliography=\endthebibliography
  \renewenvironment{thebibliography}[1]{%
    \begin{oldthebibliography}{#1}%
      \setlength{\parskip}{0.3ex plus 0.1ex minus 0.1ex}%
      \setlength{\itemsep}{0.3ex plus 0.1ex minus 0.1ex}%
  }%
  {%
    \end{oldthebibliography}%
  }

\end{document}